\newtheorem{theorem}{Theorem}[section]
\newtheorem{definition}[theorem]{Definition}
\newtheorem{lemma}[theorem]{Lemma}
\newtheorem{corollary}[theorem]{Corollary}
\newtheorem{proposition}[theorem]{Proposition}
\newtheorem{rem}[theorem]{Remark}
\newenvironment{remark}{\begin{rem}}{$\square$\end{rem}\normalfont}
\newcommand{\C}{\mathbb{C}}
\newcommand{\CC}{\mathcal{C}}
\newcommand{\HH}{\mathbb{H}}
\newcommand{\OO}{\mathbb{O}}
\newcommand{\R}{\mathbb{R}}
\newcommand{\esf}{\mathbb{S}}
\newcommand{\esfera}{\mathbb{S}^{4n+3}}
\newcommand{\Ad}{\mathrm{Ad}}
\newcommand{\Sp}{\mathrm{Sp}}
\newcommand{\SU}{\mathrm{SU}}
\newcommand{\SO}{\mathrm{SO}}
\renewcommand{\j}{\mathbf{j}}
\renewcommand{\hom}{\mathrm{Hom}}
\def\dim{\mathop{\hbox{\rm dim}}}
\def\der{\mathop{\rm Der}}
\def\tr{\mathop{\rm tr}}
\def\T{\mathop{\mathcal{T}}}
\newcommand{\spf}{\mathfrak{sp}}
\newcommand{\suf}{\mathfrak{su}}
\newcommand{\sof}{\mathfrak{so}}
\newcommand{\slf}{\mathfrak{sl}}
\newcommand{\ad}{\mathop{\mathrm{ad}}}
\newcommand{\id}{\mathrm{id}}
\newcommand{\mm}{\mathfrak{m}}
\newcommand{\hh}{\mathfrak{h}}
\newcommand{\g}{\mathfrak{g}}
\newcommand{\V}{\mathcal{V}}
\newcommand{\U}{\mathcal{U}}
\newcommand{\ii}{\mathbf {i}}
\newcommand{\ef}{\mathfrak{e}}
\newcommand{\TT}{\mathbf{T}}
\newcommand{\s}{\mathbf{S}}
\newcommand{\estS}{{S}}
\begin{document}

\title[Affine Connections on 3-Sasakian Homogeneous Manifolds]{Affine Connections \\on 3-Sasakian Homogeneous Manifolds}

\date{}

\author[C.~Draper]{Cristina Draper${}^\dagger$}
\address[C.~Draper]{\normalfont Partially supported by the Spanish Ministery of Economy and Competitiveness (MEC), and European Region Development Fund (ERDF), project MTM2016-76327-C3-1-P and by the Junta de Andaluc\'{\i}a grant FQM-336.}
\curraddr{}
\email{}

\thanks{${}^\dagger$ C.~Draper and F.J.~Palomo: Departamento de Matem\'atica Aplicada, Universidad de M\'alaga,
Ampliaci\'on Campus de Teatinos, 29071 M\'alaga (Spain).\, C.~Draper: \textit{cdf@uma.es}. ORCID 0000-0002-2998-7473.\, F.J.~Palomo: \textit{fjpalomo@ctima.uma.es}. ORCID 0000-0002-1852-0667.}

\author[F.~J.~Palomo]{Francisco J. Palomo} 
\curraddr{}
\address[F.~J.~Palomo]{\normalfont Partially supported by  the Spanish MEC, and ERDF, project MTM2016-78807-C2-2-P.}

\author[M.~Ortega]{Miguel Ortega${}^\star$}
\address[M.~Ortega]{\normalfont Partially supported by the Spanish MEC, and ERDF, project   MTM2016-78807-C2-1-P.  Also, M.~Ortega and F.J.~Palomo are partially supported by the Junta de Andaluc\'{\i}a grant FQM-324.}
\curraddr{}
\thanks{${}^\star$ Instituto de Matemáticas IEMathUGR,
Departamento de Geometr\'ia y Topolog\'ia, Facultad de Ciencias, Universidad de Granada, 18071 Granada (Spain). \textit{miortega@ugr.es}; ORCID 0000-0002-1390-9980.} 

\begin{abstract}
The space of invariant affine connections  on every $3$-Sasakian  homogeneous manifold of dimension at least $7$ is  described. In particular, the  subspaces of invariant affine metric connections,  and the subclass   with skew-torsion, are also determined. To this aim, an explicit construction of all $3$-Sasakian homogeneous manifolds is  exhibited.  It is shown that   the    $3$-Sasakian homogeneous manifolds which admit nontrivial Einstein with skew-torsion invariant affine  connections are those of dimension $7$, that is, $\mathbb{S}^7$, 
$\R P^7$ and the Aloff-Wallach space $\mathfrak{W}^{7}_{1,1}$. On $\mathbb{S}^7$ and $\R P^7$, the set of such  connections is bijective to two copies of the  conformal linear transformation group of the Euclidean space, while it is  strictly bigger on  $\mathfrak{W}^{7}_{1,1}$. The set of invariant connections with skew-torsion whose Ricci tensor satisfies that its eigenspaces are the canonical vertical and horizontal  distributions, is fully described on $3$-Sasakian homogeneous manifolds.  An affine connection satisfying  these conditions is distinguished, by parallelizing all  the Reeb vector fields associated with the $3$-Sasakian  structure, which is also Einstein with skew-torsion  on the $7$-dimensional  examples. The invariant metric affine connections on $3$-Sasakian homogeneous manifolds with parallel skew-torsion have been found. Finally, some  results have been adapted to the non-homogeneous  setting.
\end{abstract}

\maketitle 

\noindent \textbf{Keywords:} {3-Sasakian homogeneous manifolds \and invariant affine connections \and Rie\-mann-Cartan manifolds \and  Einstein with skew-torsion connections \and Ricci tensor \and parallel skew-torsion \and  compact simple Lie algebra.}\\

\noindent \textbf{MSC2010 Classification:} {Primary 53C25, 53C30, 53B05. Secondary  53C35,  17B20,  17B25.}\\

\section{Introduction}

Almost contact metric structures on $(2n+1)$-dimensional (smooth) manifolds may be regarded 
as an analogue of Hermitian structures for odd dimensional manifolds. Recall that, for every almost contact metric structure on a  $(2n+1)$-dimensional manifold, there exists a general method, \textit{the cone construction}, which permits to obtain a $(2n+2)$-manifold endowed with an Hermitian structure. Of course, the family of differentiable manifolds with Hermitian structures is wider than the set of K\"ahler manifolds. In this setting,  the following natural question raised. When does the cone construction produce a K\"ahler manifold? This very special kind of almost contact metric structure is now known as a Sasakian manifold in honour to the Japanese geometer Shigeo Sasaki, who introduced it in 1960. An extensive and complete study of Sasakian manifolds and related topics can be found in the excellent monograph \cite{galickiboyer}. 

In the late sixties, the new notion of the $3$-Sasakian manifold was introduced as a $(4n+3)$-dimensional manifold with a family of Sasakian structures parametrized by points on the $2$-dimensional unit sphere 
$\mathbb{S}^{2}$ and satisfying several compatibility conditions (Section \ref{seccion2}). Every $3$-Sasakian manifold $M$ carries $3$-orthonormal Killing vector fields which span, at every tangent vector space to $M$, a copy of the Lie algebra $\mathfrak{sp}(1)$, and therefore a $3$-dimensional foliation $\mathcal{F}_{Q}$. Under the assumption that $\mathcal{F}_{Q}$ is regular, the space of leaves $M/\mathcal{F}_{Q}$ inherits a hyper-K\"{a}hler structure with positive scalar curvature, \cite{IK}. That is, $M/\mathcal{F}_{Q}$ is endowed with a suitable family of integrable complex structures which satisfy     the quaternionic identities (see details in \cite[Chapter 13]{galickiboyer}). 
The relationship with hyper-K\"{a}hler structures goes in two ways. In fact, in this setting, starting now with a $3$-Sasakian manifold, the   cone construction produces a 
hyper-K\"{a}hler structure. 

From 1970 to 1975, the study of $3$-Sasakian geometry was mainly investigated by the Japanese school. In 1971, a remarkable property for our objectives was achieved:  
every $3$-Sasakian manifold is an Einstein space with positive scalar curvature,  \cite{kashiwada}. Despite of that particularly relevant result and others, in Boyer and Galicki's words, {\it ``1975 seems to be the year when $3$-Sasakian manifolds are relegated to an almost complete obscurity which lasted for about 15 years... The authors [Boyer and Galicki] have puzzled over this phenomenon without any sound explanation''} \cite[Chapter 13]{galickiboyer}. Anyway, at the beginning of the nineties, a renewed interest in $3$-Sasakian geometry arose in several areas. Let us briefly recall  some examples which supported this growing interest. The existence of two different Einstein metrics on $3$-Sasakian manifolds was obtained in \cite{BoGaMan98}, but only one is $3$-Sasakian. In the $7$-dimensional case, both Einstein metrics have $G_2$ weak holonomy. Moreover, for $7$-dimensional Riemannian manifolds, the existence of three Killing spinors is equivalent to the existence of a $3$-Sasakian structure,  \cite{FriKath}.  Bearing in mind  that cones over $3$-Sasakian manifolds produce Calabi-Yau manifolds, recent developments on $3$-Sasakian geometry also include the Yang-Mills equations on cones over $3$-Sasakian manifolds (\cite{GeiSpe} and references therein). Finally, in the study of the control system of a $n$-dimensional Riemannian manifold $M$ rolling on the sphere $\mathbb{S}^n$, without twisting or slipping, and under certain assumption, the manifold $M$ can be endowed with a $3$-Sasakian structure, \cite{Chitour}.
Several generalizations of 3-Sasakian structures have been recently studied, as the 3-$(\alpha,\delta)$-Sasaki manifolds, \cite{AgriDileo},
which are 3-Sasakian manifolds when $\alpha=\delta=1$, or the 3-quasi-Sasakian   manifolds \cite{quasi}. In general lines, they deal with geometric structures which are less rigid than the 3-Sasakian ones, which permits to clarify some geometrical properties of the 3-Sasakian manifolds.

We should recall a key difference between Sasakian and $3$-Sasakian manifolds. Indeed, every Sasakian manifold admits a unique metric connection   with totally skew-symmetric torsion such that all the tensors involved in the Sasakian structure are parallel, \cite{FriIva} (see   Remark~\ref{re_labuena}). Needless to say, this is not the case for a $3$-Sasakian manifold $M^{4n+3}$. In fact, for every $\tau\in \mathbb{S}^{2}$, the corresponding Sasakian structure on $M$ admits  such  connection $\nabla^{ch}_{\tau}$, but they do not coincide for different values of the parameter $\tau\in \mathbb{S}^2$. Therefore, the $3$-Sasakian structure is not {\it parallel} for any metric connection with skew  torsion. Thus, it is natural to ask whether there is a \emph{best} affine metric connection on a $3$-Sasakian manifold. This   question   in a general setting   was posed by Cartan in 1924, \cite{Cartancita}, to look  for a connection adapted to the geometry of the space under consideration. 


In this paper, we will focus on the existence (or not) of remarkable affine connections on $3$-Sasakian manifolds. A specially interesting case is to look for affine metric connections with \textit{skew  torsion}. In fact, these affine connections share geodesics with the Levi-Civita connection. The nice survey \cite{surveyagricola} includes both Mathematical and Physical motivations, as well as a wide variety of examples of affine metric connections with torsion. From our approach, among all the affine metric connections with skew  torsion, the nicest choice has been proposed in \cite{AgriFerr} in the following terms. A triple $(M,g,\nabla)$, where $g$ is a Riemannian metric and $\nabla $ is a metric affine connection with skew  torsion, is said to be  \emph{Einstein with skew  torsion} whenever
\begin{equation*}\small{
\mathrm{Sym}(\mathrm{Ric}^{\nabla})=\frac{s^{\nabla}}{\dim M}\, g,}
\end{equation*}
where $\mathrm{Sym}(\mathrm{Ric}^{\nabla})$ denotes the symmetric part of the Ricci tensor and $s^\nabla$ is the scalar curvature of $\nabla$ (Section \ref{seccion2}). Clearly, this notion is a wide generalization of the usual Einstein spaces. The main purpose of this paper is to determine when there are metric affine connections $\nabla$ on a $3$-Sasakian manifold $M$ such that, with the underlying Riemannian metric $g$, the triple $(M,g,\nabla)$ is Einstein with skew  torsion. To face this problem, we will pay attention to invariant metric affine connections on $3$-Sasakian  homogeneous manifolds. At this point, it is a remarkable fact that every $(4n+3)$-dimensional compact regular $3$-Sasakian manifold with $n< 4$ is homogeneous. Moreover, it has been conjectured that every regular $3$-Sasakian manifold must be homogeneous, \cite[p.~498]{galickiboyer}.


The classification of $3$-Sasakian homogeneous manifolds  
(see details in \cite[Theorems~13.4.6 and 13.4.7]{galickiboyer}) shows four families and five exceptional cases, as follows
$$
\begin{array}{c}
\frac{\Sp(n+1)}{\Sp(n)},\quad
\frac{\Sp(n+1)}{\Sp(n)\times\mathbb Z_2},\quad
\frac{\SU(m)}{S(\mathrm{U}(m-2)\times \mathrm{U}(1))},\quad
\frac{\SO(k )}{\SO(k-4)\times \Sp(1)},\vspace{3pt}\\
\frac{G_2 }{\Sp(1) },\quad  
\frac{F_4 }{\Sp(3) },\quad
\frac{E_6 }{\SU(6) },\quad  
\frac{E_7 }{\mathrm{Spin}(12) },\quad 
\frac{ E_8}{ E_7},
\end{array}
$$
for $n\ge0$, $m\ge3$ and $k\ge7$.
In particular, there is a one-to-one correspondence between   compact simple Lie algebras and   simply-connected $3$-Sasakian homogeneous manifolds. In some sense, $3$-Sasakian geometry seems to be an interesting geometric point of view to approach  {the} compact simple Lie algebras, specially  the exceptional ones. 

Taking into account that every $3$-Sasakian homogeneous manifold $M=G/H$ admits a reductive decomposition (though not naturally), the space of invariant affine connections on $M$ can be described in algebraical terms. In fact, in the classical paper \cite{teoNomizu}, starting from a fixed reductive decomposition $\mathfrak{g}=\mathfrak{h}\oplus \mathfrak{m}$ of the Lie algebra $\mathfrak{g}$ of $G$, Katsumi Nomizu established a very fruitful one-to-one correspondence between the set of all invariant connections on $M$ and the set of all bilinear functions $\alpha$ on $\mathfrak{m} $ with values in $\mathfrak{m}$ which are invariant by $\mathrm{Ad}(H)$. We will extensively use this correspondence for our purposes.


The paper is organized as follows. Section~\ref{seccion2} is devoted to introducing the basic definitions and properties of $3$-Sasakian manifolds in order to fix the notations. This section is mainly indebted to \cite[Chapter 13]{galickiboyer}. In particular, we have adopted the {\it very geometric} definition of $3$-Sasakian structure ${\estS}=\{\xi_{\tau} ,\eta_{\tau}, \varphi_{\tau}\}_{\tau\in \mathbb{S}^{2}}$ as in 
\cite[Definition 13.1.8]{galickiboyer}, where ${\estS}$ is a family of Sasakian structures parametrized  on the points of the $2$-dimensional sphere $\mathbb{S}^2$. Thus, the compatibility conditions on the family of Sasakian structures reduce to
the following    
$$
g(\xi_{\tau}, \xi_{\tau '})=\tau\cdot \tau'  \quad \mathrm{and}\quad \quad [\xi_\tau, \xi_\tau']=2 \xi_{\tau \times \tau'},
$$
 where  \lq\lq$ \cdot $\rq\rq\ and \lq\lq$\times$\rq\rq\ are   the standard inner and cross products in $\mathbb{R}^{3}$. 
 This section also includes the statement of the classification theorem for 
$3$-Sasakian homogeneous manifolds (Theorem~\ref{th_lasSashomogeneas}) and several basic facts on Riemann-Cartan manifolds and Einstein with skew  torsion connections. Riemann-Cartan manifolds can be seen as a generalization of the Riemann manifolds where the Levi-Civita connection is replaced with a metric affine connection with  nonvanishing torsion tensor, in general. 

Section~\ref{se_3} continues to give the necessary  background, since we have tried to keep the paper as self-contained as possible.
 Nomizu\rq{s} Theorem on invariant connections on homogeneous reductive spaces is stated 
in the way that we will use it (Theorem~\ref{nomizu}).  Several sets of invariant connections are algebraically characterized. We have enclosed a technical but powerful lemma, which allows us to properly handle covariant derivatives with respect to invariant connections; and several {\it ad hoc} algebraical  lemmata, in order to compute later the dimensions of the different spaces of invariant connections we are interested in. 

   Section~\ref{homogeneas} is, in a sense, a Lie-algebraic look at the classification by Boyer-Galicki, which is used for applying Nomizu's Theorem to any 3-Sasakian homogeneous manifold.  In  Theorem~\ref{le_laestructura!}, we exhibit an explicit and complete construction of  all the $3$-Sasakian homogeneous manifolds $M=G/H$ in a unified way. As far as we know, this   construction was not available in the literature. In particular, the Riemannian metric $g$ and the multiplication map $\alpha^{g}$ corresponding to the Levi-Civita connection via Nomizu's Theorem are given. Up to a factor, the explicit description of the metrics on $3$-Sasakian manifolds was given in \cite{bialowski} (see Remark~\ref{re_lametrica}). The reductive decompositions $\mathfrak{g}=\mathfrak{h} \oplus \mathfrak{m}$ are studied in order to determine the number of free parameters involved in the spaces of invariant connections, metric invariant connections and metric with skew  torsion invariant connections, because these numbers coincide, respectively, with  the dimensions of the following spaces of $\mathfrak{h}$-module homomorphisms 
$$
\small
\hom_{\mathfrak h}(\mathfrak m\otimes \mathfrak m,\mathfrak m),\quad
\hom_{\mathfrak h}(\mathfrak m,\mathfrak m\wedge\mathfrak m),\quad
\hom_{\mathfrak h}(\wedge^3\mathfrak m,\R),\large
$$
when $H$ is connected. Since they are computed by complexification, it is necessary to know in detail the    decomposition of  the  $\hh^\C$-module $\mm^\C$ as a sum of irreducible submodules, in each case. It is a remarkable fact that these dimensions do not depend on the particular choice of the invariant metric. In particular, these computations can be applied to every {\it canonical variation along the fibers} of the metric $g$ on each $3$-Sasakian homogeneous manifold. 
Next, we describe all $3$-Sasakian homogeneous manifolds case-by-case, in order to compute the dimensions of   the related sets of homomorphisms. 
A surprising fact on these vector spaces happens: except for the family $\SU(m)/S(\mathrm U(m-2)\times \mathrm U(1))$, and for dimension at least $7$, these dimensions are always the same, 
$$
\dim\hom_{\mathfrak h}(\mathfrak m\otimes \mathfrak m,\mathfrak m)=63,\,\,\dim\hom_{\mathfrak h}(\mathfrak m,\mathfrak m\wedge\mathfrak m)=30, \,\, \dim\hom_{\mathfrak h}(\wedge^3\mathfrak m,\R)=10.\large
$$ 
That is, these numbers do not depend on the concrete example we are dealing with. This observation was one of the seminal ideas for this paper. In fact, at the beginning, we were only interested in the family of spheres $\mathbb{S}^{4n+3}=\Sp(n+1)/\Sp(n)$ and we realized that for $\mathbb{S}^7$ and $G_{2}/\Sp(1)$ the \textit{numbers} $63$, $30$ and $10$ were the same, \cite{esphomogeneosdeG2,DraperPalomoPadge}. 
This feature made us think that the 3-Sasakian structure was behind this numerical coincidence. 
Then, we found  that several remarkable   algebraical properties of the reductive decompositions $\mathfrak{g}=\mathfrak{h} \oplus \mathfrak{m}$ are common for all $3$-Sasakian homogeneous manifolds   (see Eqs.~\eqref{eq_imp1} and \eqref{eq_imp2}, with Remark~\ref{remark6}).  Section~\ref{homogeneas}  finishes by analysing the remaining family, $G=\SU(m)$  with $m\geq 3$, and we found that
$$
\dim\hom_{\mathfrak h}(\mathfrak m\otimes \mathfrak m,\mathfrak m)=99,\,\,\dim\hom_{\mathfrak h}(\mathfrak m,\mathfrak m\wedge\mathfrak m)=45, \,\, \dim\hom_{\mathfrak h}(\wedge^3\mathfrak m,\R)=13.\large
$$
Thus,  
the \emph{exceptional} cases now are the manifolds in the family $\SU(m)/S(\mathrm U(m-2)\times \mathrm U(1))$.
The concrete computations made to determine the pieces of $\mm^\C$ (Section~\ref{su}) shed light on  the new invariant torsion tensors.

Next, the main aim of  Section~\ref{affine} is to read the above computations from a more geometrical point of view, and obtain further consequences on different types of invariant  metric connections with skew  torsion. Section~\ref{sec_invariantes} provides  explicit expressions for all the invariant  metric connections with skew  torsion  on $3$-Sasakian homogeneous manifolds in Corollary~\ref{co_lasskew}. We study Einstein connections with skew  torsion in Section~\ref{sec_Einstein}. By means of  Proposition~\ref{prop_Sys}, where we compute the symmetric part of the Ricci tensor, we obtain our main result (Theorem~\ref{dimn=1}), namely, 
\begin{quote}\it 
Let $M=G/H$ be a $(4n+3)$-dimensional $3$-Sasakian homogeneous manifold ($n\ne 0$).  
Assume that $M$ admits an    invariant metric connection $\nabla$ such that $(M,g, \nabla)$ is Einstein with  nonzero skew  torsion. Then $n=1$, that is, either $M=\mathbb{S}^{7}$, or $M=\mathbb{R}P^{7}$, or $M$ is the Aloff-Wallach space 
$\mathfrak{W}^{7}_{1,1}=\SU(3)/\mathrm U(1)$  \cite{AloWa}. Moreover, for $\mathbb{S}^7$ (and $\mathbb{R}P^{7}$), the set of such connections is parametrized by two copies of the conformal linear transformation group of the Euclidean space, while for $\mathfrak{W}^{7}_{1,1}$, by (the bigger set of) two copies of  nonzero elements in
$$
\Big\{ (c,B)\in \mathbb{R}^{3}\times \mathcal{M}_{3}(\mathbb{R}): BB^{t}+ cc^{t}\in \mathbb{R} {I}_{3}, \,\, c^{t} B=0\Big\}.
$$ 
\end{quote}
As far as we know, there are no precedents in the literature of homogeneous manifolds in which the set of Einstein connections with skew  torsion is so big and structured. 

As already pointed out, the Levi-Civita connection $\nabla^g$ is not {\it adapted} to the $3$-Sasakian structure in the sense that $\nabla^{g}\xi_{\tau}\neq 0$ for all  the Reeb vector fields $\xi_{\tau}$. Besides,
the $3$-Sasakian structure is not {\it parallel} for any metric connection with skew  torsion. Then we look in Section~\ref{distinguida} for a \emph{nontrivial} invariant affine  connection $\nabla$ (that is, $\nabla$ has nonvanishing torsion tensor), Einstein with skew  torsion and such that the Reeb vector fields are $\nabla$-parallel.  We show in Theorem~\ref{new} that there exists a well-adapted one.
\begin{quote}\it 
Let $M$ be a $(4n+3)$-dimensional $3$-Sasakian manifold ($n\ne0$) with Reeb vector fields $\{\xi_{\tau}\}_{\tau \in \mathbb{S}^2}$. Then, there exists an affine connection $\nabla^{\estS}$ on $M$ such that $\nabla^{\estS} \xi_{\tau}=0$ for any $\tau$, with skew  torsion  given by
$$
\omega_{_{\nabla^{\estS}}}= \eta_{1}\wedge d\eta_{1}+   \eta_{2}\wedge d\eta_{2}+ \eta_{3}\wedge d\eta_{3} +4  \eta_{1} \wedge \eta_{2}\wedge \eta_{3}.
$$
\end{quote}
\normalfont   
Here $\eta_{k}=g(\xi_{k}, -)$, for $\{\xi_{k}\}_{k=1,2,3}$   an orthonormal basis of Reeb vector fields with $[\xi_{1},\xi_{2}]=2\xi_{3}$.  
\begin{quote}\it
Moreover, if $n=1$, then $(M,g , \nabla^{\estS})$ is Einstein with skew  torsion; and when $M$ is homogeneous, such  affine connection is unique among the invariant ones. 
\end{quote}
\normalfont 
The affine connection $\nabla^{\estS}$  
is not Einstein with skew  torsion for $n> 1$, but it has interesting properties. In fact, its Ricci tensor   is always symmetric and satisfies
\[
\mathrm{Ric}^{\nabla}=\alpha\,   g+ \beta\,\sum_{k=1}^{3}\eta_{k}\otimes \eta_{k}
\]
for $\alpha,\beta\in \mathbb{R}$. Recall the following definition   \cite[Definition 11.1.1]{galickiboyer}: A contact metric structure $ \{\xi,\eta,\varphi,g\}$ on $M$ is said $\eta$-Einstein if there are constants $\alpha,\beta$ such that $\textrm{Ric}^g=\alpha g+\beta\eta\otimes\eta$.
Observe that the Ricci tensors of the Robertson-Walker metrics satisfy a similar property \cite[12.10]{ONeill}.
In addition, a similar condition was considered for real hypersurfaces in complex space forms in 
\cite{CecilRyan}, \cite{IveyRyan},  \cite{KimRyan}  and \cite{Montiel}, and for real hypersurfaces in quaternionic space forms in \cite{MartinezPerez} and \cite{OrtegaPerez}.
Motivated for these properties, we introduce in Section~\ref{sec_S-Einstein} the notion of ${\estS}$-Einstein affine connection on a manifold with a $3$-Sasakian structure ${\estS}$. 
In order to find these connections, we first find out when the Ricci tensor is symmetric in Corollary~\ref{co_ricsime}, and then,  in Theorem~\ref{segundo}, we describe with precision the set of ${\estS}$-Einstein   invariant affine connections on any  $3$-Sasakian homogeneous manifold. As a consequence of Theorem~\ref{segundo}, any 3-Sasakian manifold (without conditions on the dimension $\ge7$) possesses a great amount of ${\estS}$-Einstein connections, being $\nabla^{\estS}$ distinguished among them.

Finally, Section~\ref{sec_paralela} is devoted to finding the connections with parallel skew  torsion. 
For a metric affine connection $\nabla$ with totally skew-symme\-tric torsion $T$, the condition $\nabla T=0$   provides remarkable properties on the curvature and Ricci tensors of $\nabla$ \cite[Appendix A]{surveyagricola}. For example, the curvature tensor is pair symmetric and satisfies the second Bianchi identity,  similarly to the well-known case of the Levi-Civita connection. From a physical point of view,  $\nabla T=0$ simplifies the equations of superstring theory (see \cite{surveyagricola} and references therein).   It is interesting to point out the recent study of the local structure of Riemannian manifolds with nonvanishing parallel  skew  torsion
in \cite{Cleyton}. Our main result concerning parallel skew  torsion is Theorem~\ref{th_paralela}.
\begin{quote}\it 
Let $M$ be a $(4n+3)$-dimensional $3$-Sasakian homogeneous manifold ($n\geq 1$). The only invariant metric connections with parallel skew  torsion are the Levi-Civita connection, the family of the characteristic connections  $\nabla^{ch}_{\tau}$   of each Sasakian structure $\{\xi_{\tau} ,\eta_{\tau}, \varphi_{\tau}\}$, $\tau\in \mathbb{S}^2$ (Remark~\ref{re_labuena}), the canonical connection $\nabla^{c}$ of the    $3$-Sasakian  manifold (Remark~\ref{G2}~iv)), and certain family parame\-trized by  $\R P^2$. The last family only appears when $n=1$.  
\end{quote}

\section{Set up}\label{seccion2}
All the manifolds, maps, tensor fields, etc,   are assumed to be smooth. Let us briefly recall the basic notions on $3$-Sasakian geometry in order to fix some notations. This section is indebted to \cite[Chapter~13]{galickiboyer}.

Among the alternative definitions of Sasakian structure, we take the following one.
Let $(M,g)$ be a Riemannian manifold with Levi-Civita connection $\nabla^g$. The triple ${\estS}=\{\xi, \eta, \varphi\}$ is called a \emph{Sasakian structure} on $(M,g)$ when $\xi\in \mathfrak{X}(M)$ is a unit Killing vector field, $\varphi$ is the endomorphism field given by $\varphi (X)=- \nabla^g_{X}\xi$ for all $X\in \mathfrak{X}(M)$, $\eta$ is the $1$-form on $M$ metrically equivalent to $\xi$, i.e., $\eta(X)=g(X, \xi)$, and the following condition is satisfied
$$
(\nabla^g_{X}\varphi)(Y)=g(X,Y)\xi-\eta (Y) X 
$$
for $X, Y\in \mathfrak{X}(M)$.
The vector field $\xi$ and the $1$-form $\eta$ are called the Reeb vector field and the characteristic $1$-form of the Sasakian structure, respectively. A \emph{Sasakian manifold} is a Riemannian manifold  $(M,g)$ endowed with a fixed Sasakian structure ${\estS}$. The following consequences of the definition allow to handle properly  Sasakian manifolds (see details in \cite{galickiboyer}),
$$
\varphi^{2}=- \mathrm{id}+ \eta \otimes \xi,\quad g(\varphi X, \varphi Y)=g(X,Y)-\eta (X)\eta(Y), \quad \varphi(\xi)=0, \quad \eta (\varphi)=0,
$$
$$
g(X, \varphi(Y))+g(\varphi(X), Y)=0 , \quad d\eta (X,Y)=2 g(X, \varphi(Y)),  
$$
{for all } $X,Y\in \mathfrak{X}(M)$.\footnote{Our convention is $d\eta (X,Y)=X(\eta(Y))-Y(\eta(X))-\eta([X,Y])$.}

A \emph{$3$-Sasakian structure} on $(M,g)$ will be a family of Sasakian structures  ${\estS}=\{\xi_{\tau} ,\eta_{\tau}, \varphi_{\tau}\}_{\tau\in \mathbb{S}^{2}}$ on $(M,g)$ parametri\-zed by points $\tau\in \mathbb{S}^{2}$ on the $2$-dimensional unit sphere and such that, for $\tau, \tau'\in \mathbb{S}^{2}$, the following compatibility conditions hold 
\begin{equation}\label{eq_compatibilityconditions}
g(\xi_{\tau}, \xi_{\tau '})=\tau\cdot \tau'  \quad \mathrm{and}\quad \quad [\xi_\tau, \xi_{\tau'}]=2 \xi_{\tau \times \tau'},
\end{equation}
where  \lq\lq$ \cdot $\rq\rq\ and \lq\lq$\times$\rq\rq\ are the standard inner and cross products in $\mathbb{R}^{3}$,  and we extend the Reeb vector fields from $ \mathbb{S}^{2}$ to $\mathbb{R}^{3}$ by linearity.
The compatibility conditions imply, for all $\tau, \tau'\in \mathbb{S}^{2}$,
$$
\begin{array}{c}
\varphi_{\tau}\circ \varphi_{\tau' }-\eta_{\tau'}\otimes \xi_{\tau }=\varphi_{\tau \times \tau'}- (\tau \cdot \tau')\mathrm{id}, \\
\varphi_\tau(\xi_{\tau'})=\xi_{\tau \times \tau'}, \qquad \eta_{\tau}\circ\varphi_{\tau'}=\eta_{\tau \times \tau'}.
\end{array}
$$
In order to construct a $3$-Sasakian structure on a Riemannian manifold $(M,g)$, we need only to fix three Sasakian structures ${\estS}_{k}=\{\xi_{k} ,\eta_{k}, \varphi_{k}\}$, for $k=1,2,3$, such that $g(\xi_{i},\xi_{j})=\delta_{ij}$ and
$[\xi_{i},\xi_{j}]=2 \epsilon_{ijk}\xi_{k}$ (here $ \epsilon_{ijk}$ denotes the sign of the permutation). In fact, it is an easy matter {\it to extend} the Reeb vector fields to the whole sphere $\mathbb{S}^2$ by means of the   formula $\xi_{v}=\sum _{k=1}^3 v_{k}\xi_{k}$ for $v=(v_{1}, v_{2}, v_{3})^{t}\in \mathbb{S}^2$. Throughout this text, we will call 3-Sasakian structure both to ${\estS}=\{{\estS}_{\tau}\}_{\tau\in\mathbb{S}^2}$ and to $\{{\estS}_{k}\}_{k=1,2,3}$. The $3$-dimensional {\it fundamental foliation} $\mathcal{F}_{Q}$ is generated by the Reeb vector fields $ \{\xi_{\tau}\}_{\tau \in  \mathbb{S}^{2}}$
and the distribution $ Q=\mathcal{F}_{Q}^{\perp}$ is called the \textit{quaternionic distribution} of $M$.  They are also well-known as the \textit{vertical} distribution  $ {Q}^{\perp}$ and the \textit{horizontal} distribution $ Q$.
 
A remarkable result stated by Kashiwada (cf. \cite{kashiwada}) shows that every $3$-Sasakian manifold of dimension $4n+3$ is Einstein, with Ricci tensor satisfying $\mathrm{Ric}^g=2(2n+1)  g$. 

Given $(M,g,{\estS})$ a $3$-Sasakian manifold, the automorphism group $\mathrm{Aut}(M,g, {\estS})$ is given by
$$
\mathrm{Aut}(M,g, {\estS})=\bigcap_{\tau\in \mathbb{S}^2}\mathrm{Aut}(M,g, {\estS}_\tau),
$$
where $\mathrm{Aut}(M,g, {\estS}_\tau)\subset \mathrm{Iso}(M,g)$ is the subgroup of isometries $f$ of $M$ which preserve $\xi_{\tau}$ (and then $\eta_{\tau}$ and $\varphi_{\tau}$). That is, $f_{*}( \xi_{\tau}(p))=\xi_{\tau}(f(p))$ for every $p\in M$. We will mainly focus on $3$-Sasakian \textit{homogeneous} manifolds, that is, $3$-Sasakian manifolds $M$  whose  $\mathrm{Aut}(M,g, {\estS})$ acts transitively on $M$. If $(M,g, {\estS})$ is a $3$-Sasakian homogeneous manifold, then the orbit space $M/\mathcal{F}_{Q}$ is a quaternionic K\"ahler homogeneous manifold \cite[Prop. 13.4.5]{galickiboyer}  (see Section 4.1 below for more details on this fact). Then,  the next classification theorem is achieved  in \cite{Ale}.

\begin{theorem}
\label{th_lasSashomogeneas}
Any $3$-Sasakian homogeneous manifold is one of the following coset manifolds:
$$
\frac{\Sp(n+1)}{\Sp(n)},\quad
\frac{\Sp(n+1)}{\Sp(n)\times\mathbb Z_2},\quad
\frac{\SU(m)}{S(\mathrm{U}(m-2)\times \mathrm{U}(1))},\quad
\frac{\SO(k )}{\SO(k-4)\times \Sp(1)},$$
$$\frac{G_2 }{\Sp(1) },\quad  
\frac{F_4 }{\Sp(3) },\quad
\frac{E_6 }{\SU(6) },\quad  
\frac{E_7 }{\mathrm{Spin}(12) },\quad 
\frac{ E_8}{ E_7},
$$
for $n\ge0$, $m\ge3$ and $k\ge7$ ($\Sp(0)$ denoting the trivial group).
\end{theorem}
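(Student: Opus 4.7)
The plan is to reduce the classification to the Alekseevsky--Wolf list of positive quaternionic K\"ahler homogeneous spaces, by exploiting the fibration $M\to M/\mathcal{F}_Q$ mentioned just above the statement. By Kashiwada's theorem every $3$-Sasakian manifold is Einstein with positive scalar curvature, so when $(M,g,\estS)$ is $3$-Sasakian homogeneous the base $B:=M/\mathcal{F}_Q$ becomes a positive quaternionic K\"ahler homogeneous manifold. Alekseevsky's classification then forces $B$ to be one of the Wolf spaces $G/(K\cdot\Sp(1))$ (with $K\cdot\Sp(1)$ replaced by $S(\mathrm{U}(m-2)\times \mathrm{U}(2))$ in the unitary family), one for each compact simple Lie group $G$. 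This produces precisely nine possibilities for the numerator $G$, namely the four classical families and the five exceptional groups listed in the theorem.

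The second step is to lift each Wolf space $G/(K\cdot\Sp(1))$ to a $3$-Sasakian homogeneous manifold $G/H$, where $H$ is the isotropy subgroup of a point of $M$ under the $\mathrm{Aut}(M,g,\estS)$-action. In the reductive decomposition corresponding to the Wolf space, the $\spf(1)$-summand of the isotropy subalgebra acts as the quaternionic structure; removing the corresponding $\Sp(1)$ factor (or, in the $\SU(m)$ case, removing only the $\Sp(1)\subset \mathrm U(2)$ while retaining the residual $\mathrm U(1)$) yields exactly the subgroups $H$ appearing in the statement. The three Reeb vector fields $\xi_1,\xi_2,\xi_3$ are then identified with an $\spf(1)$-triple in $\g$ transverse to $\hh$, and the compatibility conditions \eqref{eq_compatibilityconditions} are verified directly from the bracket relations of $\spf(1)$ together with the $\Ad(H)$-invariance of the metric. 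This yields the four simply-connected families together with the five exceptional cases; the verification that each candidate $G/H$ is genuinely $3$-Sasakian homogeneous will in fact be carried out explicitly in Theorem~\ref{le_laestructura!} of Section~\ref{homogeneas}.

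The third step handles the non-simply-connected entry $\Sp(n+1)/(\Sp(n)\times\mathbb Z_2)=\R P^{4n+3}$. Here one allows a disconnected isotropy $H$, whose identity component is $\Sp(n)$ and whose $\mathbb Z_2$ sits in the centre of the $\Sp(1)$-factor of the Wolf space isotropy. A direct check shows that this $\mathbb Z_2$ acts on each Reeb vector field $\xi_\tau$ through a sign that is absorbed into the $\esf^2$-parametrisation of $\estS$, so the quotient inherits a well-defined $3$-Sasakian structure. Moreover, an inspection of $\pi_1$ of the remaining candidates in the list shows that in every other case the $3$-Sasakian homogeneous manifold is already simply connected, so no further discrete quotients arise.

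The main obstacle is the appeal to the classification of positive quaternionic K\"ahler homogeneous manifolds, which is a deep result of Alekseevsky resting in turn on Wolf's classification of quaternionic symmetric spaces. In the present context this can be taken as a black box (this is the route of \cite{Ale}); once it is granted, the remaining work is bookkeeping with reductive decompositions and explicit realisations of the $\spf(1)$-triples, which will be handled case by case in Section~\ref{homogeneas}.
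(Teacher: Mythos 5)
Your proposal follows essentially the same route as the paper: the paper also deduces the list from the fibration $M\to M/\mathcal{F}_Q$ onto a positive quaternionic K\"ahler homogeneous space, invokes Alekseevsky's result \cite{Ale} to conclude that the base is a Wolf space, and then realises each lift $G/H$ explicitly (Theorem~\ref{le_laestructura!}), with the projective case $\R P^{4n+3}$ treated separately via its disconnected isotropy (Remark~\ref{remark_proyectivo}). No substantive differences to report.
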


For the explicit description of the metrics of the 3-Sasakian homogeneous manifolds, see Theorem~\ref{le_laestructura!} below. 

Observe  that $\dim G/H=4n+3$ in all the cases, with $n=m-2$ for the quotients of the unitary groups, $n=k-4$ in the orthogonal case, and $n=2$, $7$, $10$, $16$ and $28$, respectively, in the exceptional cases.
The only $3$-dimensional $3$-Sasakian homogeneous manifolds are the sphere $\esf^3$ and the projective space $\mathbb{R}P^3$, which behave different from the rest, because their 
horizontal distributions reduce to $\{0\}$. 

As a consequence of Theorem~\ref{th_lasSashomogeneas}, all $3$-Sasakian homogeneous manifolds are simply-connec\-ted except  the real projective spaces $\mathbb{R}{P}^{4n+3}\simeq\frac{\Sp(n+1)}{\Sp(n)\times\mathbb Z_2}$. In particular, there is a one-to-one correspondence between   compact simple Lie algebras and   simply-connected $3$-Sasakian homogeneous manifolds. 


Let us recall the notion of \textit{Riemann-Cartan manifold}, since this is the second main topic of this paper (cf.~\cite{surveyagricola}). A Riemann-Cartan manifold is a triple $(M,g,\nabla)$, where $(M,g)$ is a Riemannian manifold and $\nabla$ is a \textit{metric} affine connection, that is to say, $\nabla g=0$. It is also commonly said that $\nabla$ is \textit{compatible} with the metric $g$. The \textit{torsion} tensor field of $\nabla$ is defined as usual by $T^{\nabla}(X,Y)=\nabla_{X}Y-\nabla_{Y}X-[X,Y]$, for $X,Y\in \mathfrak{X}(M)$, and it does not vanish in general. Clearly, these manifolds can be seen as a generalization of Riemannian manifolds, since the considered metric affine connection may be different from the Levi-Civita connection $\nabla^{g}$, which is characterized by the condition $T^{\nabla^{g}}=0$. 
For $(M,g,\nabla)$ a Riemann-Cartan manifold, we set
\begin{equation}\label{tos}
\omega_{_\nabla}(X,Y,Z):=g(T^{\nabla}(X,Y),Z),
\end{equation}
for $X,Y,Z\in \mathfrak{X}(M)$. Then, the (metric) connection $\nabla$ is said
to have \emph{totally skew-symmetric torsion} or briefly, \emph{skew  torsion}, if $\omega_{_\nabla}$ defines a differential $3$-form on $M$. This  characterizes the remarkable fact that $\nabla$ and $\nabla^g$ share their (parametrized) geodesics.

\begin{remark}\label{re_labuena}
\normalfont 
For every  Sasakian structure $\{\xi, \eta, \varphi\}$ on $(M,g)$, there is a unique metric connection with totally skew-symmetric torsion $\nabla^{ch}$ such that  $\nabla^{ch} \xi=0, \nabla^{ch} \eta=0$ and $\nabla^{ch} \varphi=0$, \cite{FriIva}.
An explicit formula for this connection is given by
\[
g(\nabla^{ch}_{X}Y, Z)=g(\nabla^{g}_{X}Y, Z)+ \frac{1}{2}\eta \wedge d\eta(X,Y,Z).
\] 
Then, in a 3-Sasakian manifold, each Sasakian structure $\{\xi_{\tau} ,\eta_{\tau}, \varphi_{\tau}\}$ has a distinguished metric connection with totally skew-symmetric torsion as above, denoted by $\nabla_{\tau}^{ch}$ and called the \emph{characteristic connection} of the corresponding Sasakian structure. It has been   studied in \cite{new}. 
\end{remark}  

For any metric affine connection $\nabla$, we define the difference $(1,2)$-tensor $D=\nabla-\nabla^g$ on $M$. The torsion $T^\nabla$ satisfies $T^\nabla(X,Y)=D(X,Y)-D(Y,X)$, for any $X,Y\in\mathfrak{X}(M)$.
The connections $\nabla$ and $\nabla^g$ share the same geodesics if, and only if,  the difference tensor $D$ is skew-symmetric. In such case, we have $$\nabla=\nabla^g+\frac12T^\nabla.$$

Given $\nabla$ an affine connection on an $n$-dimensional manifold $M$, it is always possible to compute its Ricci tensor, but it is not symmetric in general. Thus, a Riemann-Cartan manifold $(M,g,\nabla)$ is said to be \emph{Einstein with skew  torsion} (cf. \cite{AgriFerr}) if the metric affine connection $\nabla$ has  totally skew-symmetric torsion and satisfies 
\begin{equation} \label{einstein}
\mathrm{Sym}(\mathrm{Ric}^{\nabla})=\frac{s^{\nabla}}{\dim M}\, g,
\end{equation}
where $s^{\nabla}$ is the corresponding scalar curvature given by $s^{\nabla}=\sum_{i,j=1}^{n}g (R^{\nabla}(e_{i}, e_{j})e_{j}, e_{i})$ for an orthonormal basis  and, following \cite{surveyagricola}, $\mathrm{Sym}(\mathrm{Ric}^{\nabla}) $ denotes the symmetric part of the Ricci (curvature) tensor of $\nabla$.    
The metric connections   
such that $(M,g,\nabla)$ is Einstein with skew  torsion are the critical points  of a variational problem which involves the scalar curvature of the Levi-Civita connection $\nabla^g$ and the torsion of $\nabla$ (see details in \cite{AgriFerr}). For brevity, we will also say that $\nabla$ is an Einstein with skew  torsion  {(affine)} connection.

We recall from \cite{FriIva} some curvature identities 
on Riemann-Cartan manifolds with totally skew-symmetric torsion.
Let $\s\in \mathcal{T}^{(0,2)}(M)$ be the tensor given at $p\in M$ by
\begin{equation}\label{eq_defS}
\s(X,Y)_p:=\sum_{j=1}^{n}g(T^{\nabla}(e_{j},X_p),T^{\nabla}(e_{j},Y_p)),
\end{equation}
where $\{e_{1},\ldots,e_{n}\}$ is an orthonormal basis of $T_pM$ and $X,Y\in \mathfrak{X}(M)$. The Ricci tensor of the Levi-Civita connection, denoted by $\mathrm{Ric}^g$, and  
the Ricci tensor of $\nabla$ are related by
\[
 \mathrm{Ric}^{\nabla} =\mathrm{Ric}^{g}-\frac{1}{4}\s+\frac12\,\textrm{div}(T^{\nabla}),  
\]
where $\textrm{div}(T^{\nabla}) $ denotes the divergence of the torsion form given by
$$
\textrm{div}(T^{\nabla})(X,Y)=\sum_{i=1}^{n}\left(\nabla^{g}_{e_{i}}\omega_{_{\nabla}}\right)(X,Y,e_{i}). 
$$
In particular, since $\mathrm{Ric}^{g}$ and $\s$ are symmetric tensors  but $\textrm{div}(T^{\nabla})$ is skew-symmetric, we get 
\begin{equation}\label{formulicas}
\mathrm{Sym}(\mathrm{Ric}^{\nabla})=\mathrm{Ric}^{g}-\frac{1}{4}\s,\qquad 
\mathrm{Skew}(\mathrm{Ric}^{\nabla})=\frac12\,\textrm{div}(T^{\nabla}).
\end{equation}
Then,  $ \mathrm{Ric}^{\nabla}$ is symmetric if, and only if, $\textrm{div}(T^{\nabla})=0$. For instance, this holds if the torsion is $\nabla$-parallel.
\begin{remark}
{\rm Our conventions on the signs are different from \cite{FriIva}. For example, we take $\textrm{div}(T^{\nabla})=-\delta^{g}(\omega_{_{\nabla}})$, where 
$\delta^g(\omega_{_{\nabla}})=-\sum_{i=1}^n \iota_{ e_{i}}\nabla^{g}_{e_{i}}\omega_{_{\nabla}}$ denotes 
the codifferential of the $3$-form $\omega_{_{\nabla}}$. Also, let us recall that $\delta^{g}(\omega_{_{\nabla}})=\delta^{\nabla}(\omega_{_{\nabla}})$ under our assumption that $\nabla$ has  
skew  torsion  (\cite[p.~305-306]{FriIva}).}
\end{remark}

\section{Nomizu's Theorem on Invariant Connections and Algebraical Tools} \label{se_3}

The study of invariant affine connections in homogeneous spaces is a \emph{simple} matter in the reductive cases thanks to  Nomizu's Theorem (cf. \cite{teoNomizu}), which allows to translate the geometric problem to an algebraic setting. Roughly speaking, it establishes a bijective correspondence between these connections and certain homomorphisms of modules. This allows to compute the \emph{size} of the set of invariant connections. 

Let $G$ be a Lie group acting transitively on a manifold $M$. We write a dot to denote the action of $G$ on $M$ and so, for $\sigma \in G$, the left translation by $\sigma$ will be given by $\tau_{\sigma}(p)=\sigma \cdot p$ for all $p\in M$.
For each $\sigma \in G$ and $X\in \mathfrak{X}(M)$, the vector field $\tau_{\sigma}(X)\in \mathfrak{X}(M)$ is defined at each $p\in M$ by
\[
 (\tau_{\sigma}(X) )_p:=(\tau_{\sigma})_{*}(X_{\sigma^{-1}\cdot p}).
\]
An affine connection $\nabla$ on $M$ is said to be $G$-\textit{invariant}  if, for each $\sigma \in G$ and for all $X,Y\in\mathfrak{X}(M)$,
$$
\tau_{\sigma}(\nabla_{X}Y)=\nabla_{_{\tau_{\sigma}(X)}}\tau_{\sigma}(Y).
$$
Let $H$ be the isotropy subgroup at a fixed point $o\in M$, so that there exists a diffeomorphism between $M$ and $G/H$. The homogeneous space $M=G/H$ is said to be \textit{reductive} if the Lie algebra $\mathfrak{g}$ of $G$ admits a vector space decomposition
\begin{equation}\label{eq_descomposicionreductiva}
\mathfrak{g}=\mathfrak{h}\oplus\mathfrak{m},
\end{equation}
for $\mathfrak{h}$ the Lie algebra of $H$ and $\mathfrak{m}$ an $\mathrm{Ad}(H)$-invariant subspace (i.e., $\mathrm{Ad}(H)(\mathfrak{m})\subset \mathfrak{m}$). In this case, $\mathfrak{g}=\mathfrak{h}\oplus\mathfrak{m}$ is called a \emph{reductive decomposition} of $\mathfrak{g}$. The condition $\mathrm{Ad}(H)(\mathfrak{m})\subset \mathfrak{m}$ implies that $[\mathfrak{h},\mathfrak{m}]\subset\mathfrak{m}$, and both are equivalent conditions when $H$ is connected. The differential map $\pi_{*}$ of the projection $\pi\colon G\to M=G/H$ gives a linear isomorphism
$(\pi_{*})_e\vert_{\mathfrak{m}}\colon\mathfrak{m}\to T_{o}M$, where $o=\pi(e)$. Nomizu's Theorem,   \cite{teoNomizu}, can be stated as follows:

\begin{theorem}\label{nomizu}
Let $G/H$ be a reductive homogeneous space with a fixed reductive decomposition as in  (\ref{eq_descomposicionreductiva}). Then, there is a one-to-one correspondence between the set of   $G$-invariant linear connections $\nabla$ on $G/H$ and the vector space of bilinear maps $\alpha\colon \mathfrak{m}\times\mathfrak{m}\to\mathfrak{m}$ such that $\mathrm{Ad}(H)\subset\mathrm{Aut}(\mathfrak{m},\alpha)$.
In case $H$ is connected, this condition $\mathrm{Ad}(H)\subset\mathrm{Aut}(\mathfrak{m},\alpha)$ is equivalent to
\begin{equation}\label{casoconexo}
[A,\alpha(X,Y)]=\alpha([A,X],Y)+\alpha(X,[A,Y])
\end{equation}
for all $X,Y\in\mathfrak{m}$ and $A\in\mathfrak{h}$ (i.e., $\mathrm{ad}(\mathfrak{h})\subset \mathrm{Der}(\mm,\alpha)$).
\end{theorem}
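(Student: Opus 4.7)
My plan is to transport the geometric problem from $M$ to the Lie algebra $\g$ via fundamental vector fields, set up the two directions of the correspondence at the base point $o$, and finally derive the infinitesimal characterization (\ref{casoconexo}) from the connectedness of $H$.

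For the forward direction, given a $G$-invariant connection $\nabla$ on $M=G/H$, I would associate to it the bilinear map $\alpha\colon\mm\times\mm\to\mm$ defined by
$$\alpha(X,Y):=\bigl((\pi_*)_e|_\mm\bigr)^{-1}\bigl((\nabla_{X^*}Y^*)_o\bigr),$$
where $X^*$ denotes the fundamental vector field $X^*_p=\frac{d}{dt}\big|_{t=0}\exp(tX)\cdot p$ attached to $X\in\g$. The properties $X^*_o=(\pi_*)_e X$ for $X\in\mm$ (so that $X^*$ is a local frame near $o$ as $X$ runs over a basis of $\mm$), together with the intertwining relation $(\tau_\sigma)_* X^*=(\Ad(\sigma)X)^*$ (which follows directly from $\sigma\exp(tX)\sigma^{-1}=\exp(t\,\Ad(\sigma)X)$), are the two technical ingredients. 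Bilinearity of $\alpha$ is immediate, and its $\Ad(H)$-invariance follows by applying $(\tau_h)_*$ to $\nabla_{X^*}Y^*$ at $o$ for $h\in H$: the $G$-invariance of $\nabla$ carries the covariant derivative to $\nabla_{(\Ad(h)X)^*}(\Ad(h)Y)^*$ at $o$, while $(\tau_h)_*|_{T_o M}$ corresponds to $\Ad(h)|_\mm$ under $(\pi_*)_e|_\mm$. This yields $\alpha(\Ad(h)X,\Ad(h)Y)=\Ad(h)\,\alpha(X,Y)$, i.e.\ $\Ad(H)\subset\mathrm{Aut}(\mm,\alpha)$.

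For the reverse direction, I would first produce a canonical base $G$-invariant connection $\nabla^{0}$ on $M$, namely the one whose parallel transport along $t\mapsto\exp(tX)\cdot o$ ($X\in\mm$) coincides with $(\tau_{\exp(tX)})_*|_o$; this corresponds to $\alpha\equiv 0$ in the forward construction. Given an arbitrary $\Ad(H)$-invariant $\alpha$, I would transport it to an $H$-invariant bilinear map $\widetilde{\alpha}_o$ on $T_o M$ through $(\pi_*)_e|_\mm$, and extend it to a global $(1,2)$-tensor $\widetilde{\alpha}$ on $M$ by
$$\widetilde{\alpha}_p(v,w):=(\tau_\sigma)_*|_o\,\widetilde{\alpha}_o\bigl((\tau_{\sigma^{-1}})_* v,(\tau_{\sigma^{-1}})_* w\bigr),$$
for any $\sigma\in G$ with $\sigma\cdot o=p$. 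Well-definedness reduces precisely to the $H$-invariance of $\widetilde{\alpha}_o$, since any two such $\sigma$ differ by an element of $H$. Then $\nabla:=\nabla^{0}+\widetilde{\alpha}$ is a $G$-invariant affine connection on $M$, and the two constructions are mutually inverse by direct inspection at $o$ on fundamental vector fields.

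For the last assertion, when $H$ is connected it is generated by $\exp(\hh)$. Plugging $h=\exp(tA)$ with $A\in\hh$ into $\Ad(h)\alpha(X,Y)=\alpha(\Ad(h)X,\Ad(h)Y)$ and differentiating at $t=0$ yields
$$[A,\alpha(X,Y)]=\alpha([A,X],Y)+\alpha(X,[A,Y]);$$
conversely, this linearized identity integrates to $\Ad(H)$-invariance thanks to connectedness of $H$, so that $\ad(\hh)\subset\der(\mm,\alpha)$ is indeed equivalent to $\Ad(H)\subset\mathrm{Aut}(\mm,\alpha)$. I expect the main difficulty to lie in the reverse construction: fundamental vector fields are not $G$-invariant but transform via $\Ad$, so the patching of $\widetilde{\alpha}_o$ into a global $G$-invariant tensor on $M$ must make essential use of the $\Ad(H)$-invariance of $\alpha$, and transitivity of $G$ is needed to cover all of $M$ consistently.
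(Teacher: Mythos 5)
The paper itself offers no proof of this statement: it is quoted as Nomizu's classical theorem with a citation to \cite{teoNomizu}, so there is no in-paper argument to compare against. Judged on its own terms, your architecture is the standard one --- fix a canonical invariant connection $\nabla^{0}$, observe that $G$-invariant connections form an affine space modelled on $G$-invariant $(1,2)$-tensors, and identify such tensors with $\mathrm{Ad}(H)$-invariant bilinear maps on $\mathfrak m$ --- and both the equivariance computation in the forward direction and the differentiation/integration argument for the case of connected $H$ are correct.

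There are, however, two concrete problems. First, your claim that $\nabla^{0}$ corresponds to $\alpha\equiv 0$ under your forward map, and hence that the two constructions are mutually inverse, is false: for the connection whose parallel transport along $t\mapsto\exp(tX)\cdot o$ is $(\tau_{\exp(tX)})_{*}$ one computes $(\nabla^{0}_{X^{*}}Y^{*})_{o}=\tfrac{d}{dt}\big|_{0}(\pi_{*})_{e}\bigl(\mathrm{Ad}(\exp(-tX))Y\bigr)=-[X,Y]_{\mathfrak m}$, which does not vanish in general. The composite of your two maps is therefore the shift $\alpha\mapsto\alpha-[\,\cdot\,,\cdot\,]_{\mathfrak m}$, not the identity. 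This is precisely why Nomizu --- and the paper in \eqref{eq_elalfaasociado} --- normalizes the correspondence as $\alpha_{_{\nabla}}(X,Y)=\nabla_{X_{o}}Y-[X,Y]_{o}$; with your unnormalized $\alpha(X,Y)=(\nabla_{X^{*}}Y^{*})_{o}$ the subsequent formulas for torsion, curvature and the Levi-Civita multiplication \eqref{eq_alfadeLevi} would all acquire spurious $[X,Y]_{\mathfrak m}$ terms. The bare existence of a bijection survives (your reverse map $\alpha\mapsto\nabla^{0}+\widetilde\alpha$ is already bijective onto the invariant connections), but the ``direct inspection at $o$'' step fails as written. Second, you never actually construct $\nabla^{0}$: prescribing parallel transport only along the curves $\exp(tX)\cdot o$ through the single point $o$ does not determine a connection, and the existence of the canonical connection is exactly where the $\mathrm{Ad}(H)$-invariance of $\mathfrak m$ must be used (e.g.\ via the principal bundle $G\to G/H$ with the horizontal distribution generated by $\mathfrak m$). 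As written, the existence half of the correspondence rests on an object that is named but not shown to exist.
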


We should bear in mind that, under the above identification $(\pi_{*})_e\vert_{\mathfrak{m}}$, the correspondence given by Nomizu's Theorem works as follows
\begin{equation}\label{eq_elalfaasociado}
\nabla \longmapsto \alpha_{_{\nabla}}(X,Y)=\nabla_{X_{o}}Y- [X,Y]_{o}, \quad X,Y\in \mathfrak{m},
\end{equation} 
for $\nabla$ a  $G$-invariant affine connection on $G/H$ and $\alpha_{_{\nabla}}$  the associated bilinear map on $\mathfrak{m}$ (the key point is that  $L(X,Y)= \nabla_XY- [X,Y]$ defines a tensor).

The reductive complement $\mathfrak{m}$ is an $\mathfrak{h}$-module and so, in a natural way, the tensor product $\mathfrak{m}\otimes\mathfrak{m}$ is also  an $\mathfrak{h}$-module for $A\cdot X\otimes Y= [A,X]\otimes Y + X\otimes [A,Y]$. Note that any bilinear map $\alpha$ as in \eqref{casoconexo} allows to construct a homomorphism of $\mathfrak{h}$-modules
$\widetilde\alpha\colon \mathfrak{m}\otimes\mathfrak{m}\to\mathfrak{m}$ by means of $\widetilde\alpha(X\otimes Y) =\alpha(X,Y)$ for all $X,Y\in\mathfrak{m}$, and conversely. We will usually identify $\alpha$ and   $\widetilde\alpha$. So, Nomizu's Theorem can be read in the following terms.

\begin{corollary} \label{atilde} 
Let $G/H$ be a reductive homogeneous space with a fixed reductive decomposition as in  (\ref{eq_descomposicionreductiva}) and $H$ connected. Then, there is a bijective correspondence between the 
set of  $G$-invariant affine connections on $G/H$
and the vector space  
$\mathrm{Hom}_{\mathfrak{h}}(\mathfrak{m}\otimes\mathfrak{m},\mathfrak{m})$.
\end{corollary}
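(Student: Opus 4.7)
The plan is to deduce Corollary~\ref{atilde} as an almost immediate reformulation of Nomizu's Theorem~\ref{nomizu}, once one unravels what it means for the linear map $\widetilde\alpha\colon \mm\otimes\mm\to\mm$ associated with a bilinear map $\alpha\colon \mm\times\mm\to\mm$ to be a homomorphism of $\hh$-modules.

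First, I would invoke Theorem~\ref{nomizu}: since $H$ is connected, the set of $G$-invariant affine connections on $G/H$ is in bijection with the space of bilinear maps $\alpha\colon \mm\times\mm\to\mm$ satisfying condition \eqref{casoconexo}, namely $[A,\alpha(X,Y)]=\alpha([A,X],Y)+\alpha(X,[A,Y])$ for all $A\in\hh$ and $X,Y\in\mm$. This translates the problem into a purely algebraic statement on $\mm$ as an $\hh$-module.

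Next, I would use the universal property of the tensor product to identify bilinear maps $\alpha\colon \mm\times\mm\to\mm$ with linear maps $\widetilde\alpha\colon \mm\otimes\mm\to\mm$, via $\widetilde\alpha(X\otimes Y)=\alpha(X,Y)$. The space $\mm\otimes\mm$ carries the natural $\hh$-module structure given by $A\cdot(X\otimes Y)=[A,X]\otimes Y+X\otimes[A,Y]$, as recalled just before the statement. Under this identification, \eqref{casoconexo} reads
\begin{equation*}
[A,\widetilde\alpha(X\otimes Y)]=\widetilde\alpha\bigl([A,X]\otimes Y+X\otimes[A,Y]\bigr)=\widetilde\alpha\bigl(A\cdot(X\otimes Y)\bigr),
\end{equation*}
which is exactly the condition that $\widetilde\alpha$ commutes with the $\hh$-actions on source and target, i.e. that $\widetilde\alpha\in\hom_{\hh}(\mm\otimes\mm,\mm)$. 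Since bilinear maps exhaust $\hom(\mm\otimes\mm,\mm)$ by the universal property, this equivalence gives the claimed bijection.

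There is no real obstacle here: the only thing to check carefully is that the $\hh$-action on $\mm\otimes\mm$ is the one for which \eqref{casoconexo} becomes $\hh$-equivariance of $\widetilde\alpha$, and this is a direct verification. The content of the corollary is conceptual rather than technical—it is the recasting of Nomizu's correspondence into the language of $\hh$-module homomorphisms, which is what makes the later dimension counts of $\hom_{\hh}(\mm\otimes\mm,\mm)$ (and of its restrictions corresponding to metric connections, skew torsion, etc.) available through representation-theoretic tools such as complexification and decomposition into irreducibles.
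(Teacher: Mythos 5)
Your proposal is correct and follows exactly the route the paper takes: the paper's justification is precisely the observation, made in the paragraph preceding the corollary, that condition \eqref{casoconexo} is the same as $\hh$-equivariance of $\widetilde\alpha$ under the universal-property identification of bilinear maps $\mm\times\mm\to\mm$ with linear maps $\mm\otimes\mm\to\mm$, combined with Theorem~\ref{nomizu}. Nothing is missing.
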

 
From now on, we extensively  use the identification $(\pi_{*})_e\vert_{\mathfrak{m}}\colon\mathfrak{m}\to T_{o}M$.
Given $\nabla$ a  $G$-invariant affine connection on $G/H$ 
and $\alpha_{_{\nabla}}\ $ the associated bilinear map as in \eqref{eq_elalfaasociado},   the torsion and curvature tensors of the $G$-invariant affine connection $\nabla$   are   computed in \cite{teoNomizu} as follows:
\begin{align*}
T^{\nabla}(X,Y)= &\ \alpha_{_{\nabla}} (X,Y)-\alpha_{_{\nabla}} (Y,X)-[X,Y]_\mathfrak{m}, \\
\label{cur}
R^{\nabla}(X,Y)Z=& \ \nonumber 
 \alpha_{_{\nabla}} (X,\alpha_{_{\nabla}} (Y, Z))-\alpha_{_{\nabla}} (Y,\alpha_{_{\nabla}} (X, Z)) 
  -\alpha_{_{\nabla}} ( [X , Y]_{\mathfrak{m}}, Z)-[[X , Y]_{\mathfrak{h}} , Z],
\end{align*}
for any $X, Y, Z\in \mathfrak{m}$, where   $[\ ,\ ]_{\mathfrak{h}}$ and   $[\ ,\ ]_{\mathfrak{m}}$ denote the composition of the bracket
($ [\mathfrak{m},\mathfrak{m}]\subset \mathfrak{g}$) with  the projections $\pi_{\mathfrak{h}}$ and $\pi_{\mathfrak{m}}$  of $\mathfrak{g}= \mathfrak {h}\oplus \mathfrak{m}$ on each factor, respectively. These expressions give $T^{\nabla}$  and $R^{\nabla}$ at the point $o=\pi(e)$, but the invariance permits to recover the whole tensors.

To deal with the covariant derivative of arbitrary tensors at the point $o$ with  respect to an invariant affine connection $\nabla$, we include the  following technical result.

\begin{lemma} \label{le_comoderivar}
Let $M=G/H$ be a reductive homogeneous space with a fixed reductive  decomposition as in  (\ref{eq_descomposicionreductiva}) and $\nabla$ a 
$G$-invariant affine connection. For every  tensor field of type $(1,k)$, $\TT\in \mathcal{T}^{(1,k)}(M)$, the following formula holds
\begin{equation}\label{derivasdeA}
(\nabla_{Z} \TT)(X_{1},...,X_{k})=\alpha_{_{\nabla}}(Z, \TT(X_{1},...,X_{k}))-\sum_{i=1}^{k}\TT(X_{1},..., \alpha_{_{\nabla}}(Z,  X_{i}),...,X_{k}),
\end{equation}
where $Z, X_{1},...,X_{k}\in \mathfrak{m}\approx T_{o}M$.
\end{lemma}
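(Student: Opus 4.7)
The strategy is to exploit the \emph{canonical invariant connection} $\nabla^{0}$ on $M=G/H$, that is, the $G$-invariant affine connection associated via Nomizu's Theorem~\ref{nomizu} to the trivial bilinear map $\alpha^{0}\equiv 0$ on $\mathfrak{m}$. It is a classical fact (see, e.g., Kobayashi--Nomizu, \emph{Foundations of Differential Geometry}, Vol.~II, Ch.~X) that every $G$-invariant tensor field on a reductive homogeneous space is parallel with respect to $\nabla^{0}$. In particular, $\nabla^{0}\TT=0$ whenever $\TT$ is $G$-invariant, which must be a standing hypothesis in \eqref{derivasdeA}: the right-hand side depends only on $\TT_{o}$, so some form of invariance is necessary for the equation to make sense and this is the case in every application of the lemma within the paper.

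The body of the proof is the observation that any two affine connections differ by a $(1,2)$-tensor. Setting $L:=\nabla-\nabla^{0}$, so that $L\in\mathcal{T}^{(1,2)}(M)$, the correspondence \eqref{eq_elalfaasociado} combined with $\alpha_{\nabla^{0}}=0$ gives
\[
L_{o}(X,Y)=\alpha_{_{\nabla}}(X,Y)\qquad\textrm{for all }X,Y\in\mathfrak{m}\approx T_{o}M.
\]
Next I would apply the standard derivation rule describing how the difference of two connections acts on a $(1,k)$-tensor,
\[
\bigl((\nabla_{Z}-\nabla^{0}_{Z})\TT\bigr)(X_{1},\ldots,X_{k})=L\bigl(Z,\TT(X_{1},\ldots,X_{k})\bigr)-\sum_{i=1}^{k}\TT\bigl(X_{1},\ldots,L(Z,X_{i}),\ldots,X_{k}\bigr),
\]
which is an immediate consequence of the Leibniz rule applied to the two connections separately. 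Evaluating this identity at $o$ and using both $\nabla^{0}\TT=0$ and $L_{o}=\alpha_{_{\nabla}}$ produces exactly \eqref{derivasdeA}.

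The only nontrivial ingredient is the assertion that $G$-invariant tensors are $\nabla^{0}$-parallel, and this is the step where I expect the main obstacle to lie if one wants a self-contained argument. The cleanest justification is that the $\nabla^{0}$-parallel transport along the orbit curve $t\mapsto \exp(tZ)\cdot o$ coincides with the tangent map of the left translation $\tau_{\exp(tZ)}$; since $\TT$ is preserved by $\tau_{\exp(tZ)}$, its values along any such curve are $\nabla^{0}$-parallel, and $G$-transitivity propagates this to all of $M$. Once this classical fact is accepted, the rest of the argument is routine tensor-algebraic bookkeeping.
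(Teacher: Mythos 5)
Your proof is correct, but it follows a genuinely different route from the paper's. The paper works directly with the given connection $\nabla$: for each $Y\in\mathfrak{m}$ it constructs a ``convenient'' extension $W^{Y}$ of $Y$ satisfying $[Z^{+},W^{Y}]_{o}=0$, so that $\alpha_{_{\nabla}}(Z,Y)=\nabla_{Z}W^{Y}$, and then applies the Leibniz rule with these extensions; the formula drops out once one also replaces $\TT(W^{X_{1}},\dots,W^{X_{k}})$ by the convenient extension of its value at $o$ (a step which, just like yours, silently uses the $G$-invariance of $\TT$). You instead compare $\nabla$ with the canonical invariant connection $\nabla^{0}$ ($\alpha_{_{\nabla^{0}}}=0$), observe from \eqref{eq_elalfaasociado} that the difference tensor satisfies $L_{o}=\alpha_{_{\nabla}}$, and invoke the classical fact that invariant tensors are $\nabla^{0}$-parallel; the standard formula for how a difference tensor acts on $(1,k)$-tensors then gives \eqref{derivasdeA} immediately. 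What your approach buys is a cleaner separation of the analytic content (all concentrated in the single classical statement about $\nabla^{0}$, which you justify correctly via parallel transport along orbit curves) from routine tensor algebra, at the cost of importing the canonical connection, which the paper never introduces; the paper's construction of $W^{Y}$ is more self-contained and is reused elsewhere (e.g.\ in the proof of Theorem~\ref{le_laestructura!}). Your explicit observation that $G$-invariance of $\TT$ is a necessary standing hypothesis --- since the right-hand side of \eqref{derivasdeA} depends only on $\TT_{o}$ --- is a genuine sharpening of the statement as printed, and it is indeed satisfied in every application made in the paper.
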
  

\begin{proof}
For every $X\in \mathfrak{g}$, let us consider the fundamental vector  field $X^{+}\in \mathfrak{X}(M)$ defined by 
$$
X^{+}_{p}:=\left.\frac{d}{dt}\right \vert_{0}(\mathrm{exp}(tX)\cdot p).
$$
 A direct computation shows
$
X^{+}_{o}=(\pi_{*})_e (X)
$
and therefore $X\in \mathfrak{m}$ corresponds with $X^{+}_{o}$ via the  identification $(\pi_{*})_e\vert_{\mathfrak{m}}$. In these terms, the  bilinear map $\alpha_{_{\nabla}}$ corresponding to $\nabla$ via  Nomizu's Theorem is given by
$$
\alpha_{_{\nabla}}(X,Y)=\nabla_{X}W^{Y}-[X^{+}, W^{Y}]_{o},
$$
for $X\in \mathfrak{m}$ and $W^{Y}\in \mathfrak{X}(M)$   an arbitrary  extension of $Y\in T_{o}M$. The tangent vector $Y\in \mathfrak{m} \approx T_{o}M$ can be extended to the vector field $W^Y$ in a  convenient way. Indeed, we choose the extension $W^Y$ such that $ [X^{+}, W^{Y}]_{o}=0$. For $X=0$, the vector field $X^+$ vanishes  identically and any extension $W^Y$ satisfies the required property.  On the contrary, when $X\neq 0$, we take a local coordinate system $ (x_{1},..., x_{n})$ centered at $o\in M$ and such that $X^{+}_{o}= \frac{\partial}{\partial x_{1}}\vert_{o}$. We have the local coordinate  expressions $X^{+}=\sum f_{i}\frac{\partial }{\partial x_{i}}$ and  $W^{Y}=\sum h_{i}\frac{\partial }{\partial x_{i}}$. Thus, the  conditions $[X^{+}, W^{Y}]_{o}=0$ and $W^{Y}_{o}=Y$ become
$$
\left.\frac{\partial h_{i}}{\partial x_{1}}\right\vert_{o}=\sum_{j=1}^{n} y_{j}\left. \frac{\partial f_{i}}{\partial x_{j}}\right \vert_{o},\quad  h_{i}(o)=y_{i},  \quad (i=1,...,n)
$$
where $Y=\sum_{i=1}^n y_{i}\frac{\partial }{\partial x_{j}}\vert_{o}$.  Now, it is clear how to construct the vector field $W^Y$, and we obtain the  short formula
\begin{equation}\label{alphafacil}
\alpha_{_{\nabla}}(X,Y)=\nabla_{X}W^{Y}
\end{equation}
for our convenient extension $W^Y$.
Finally, the proof of  (\ref{derivasdeA}) is an easy matter  by using (\ref{alphafacil}). Indeed, a straightforward computation  shows
\begin{align*}
(\nabla_{Z} \TT)(X_{1},...,X_{k})&=\nabla_{Z}(W^{\TT(X_{1},...,X_{k})})- \sum_{i=1}^{k}\TT(X_{1},..., \nabla_{Z}W^{X_{i}},...,X_{k})\\
&=\alpha_{_{\nabla}}(Z, \TT(X_{1},...,X_{k}))-\sum_{i=1}^{k}\TT(X_{1},...,  \alpha_{_{\nabla}}(Z, X_{i}),...,X_{k}),
\end{align*}
by using here suitable extensions of the tangent vectors $X_{1},...,X_{k}, \TT(X_{1},...,X_{k})\in T_{o}M$.
\end{proof}

If our reductive homogeneous space $G/H$ is endowed with a $G$-invariant Riemannian metric $g$, the identification of $\mathfrak{m}$ with $T_oM$ via $\pi_{*}$ allows to consider the orthogonal Lie algebra $ \mathfrak{so}(\mathfrak{m},g)$ corresponding to the $\mathrm{Ad}(H)$-invariant  nondegenerate symmetric bilinear map induced by $g$. In particular, we have $\mathrm{ad}(\hh)\subset \mathfrak{so}(\mathfrak{m},g)$, 
so we can consider the
$\hh$-module structure on $\mathfrak{so}(\mathfrak{m},g)$ given by $(A\cdot \psi) (X)=[A,\psi(X)]- \psi([A,X])$. Recall also the usual $\hh$-module structure
on $\Lambda^{2}\mathfrak{m}$ given by $A\cdot (X\wedge Y )=[A,X]\wedge Y + X\wedge [A,Y].$ Then, $\Lambda^{2}\mathfrak{m}$ and $\mathfrak{so}(\mathfrak{m},g)$ are isomorphic $\hh$-modules by means of 
$X\wedge Y \mapsto g(X,-)Y- g(Y,-)X\in\mathfrak{so}(\mathfrak{m},g).$
The outcome is  another version of Nomizu's Theorem (see \cite[Theorem~2.7, Remark~2.8]{DraperGarvinPalomo}).
 \begin{lemma} 
Let $(G/H, g)$ be a reductive homogeneous Riemannian manifold with reductive decomposition $\mathfrak{g}=\mathfrak{h}\oplus\mathfrak{m}$
and $H$ connected. Then, a $G$-inva\-riant affine 
 connection $\nabla$ is metric ($\nabla g=0$) if, and only if,  the related bilinear map $ \alpha_{_{\nabla}}\colon \mathfrak{m}\times\mathfrak{m}\to\mathfrak{m}$ satisfies
$\alpha_{_{\nabla}}(X,-)\in\mathfrak{so}(\mathfrak{m},g)$ for all $X\in\mathfrak{m}$. Therefore, there is a bijective correspondence between the set of connections on $G/H$ which are $G$-invariant and compatible with $g$, and the vector space $\mathrm{Hom}_{\mathfrak{h}}(\mathfrak{m}, \mathfrak{so}(\mathfrak{m},g))$ (or alternatively with $\hom_{\mathfrak{h}}(\mathfrak{m}, \Lambda^2\mm)$).
\end{lemma}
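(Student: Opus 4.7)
The plan is to adapt the technique of Lemma~\ref{le_comoderivar} to the $(0,2)$-tensor $g$, use the $G$-invariance of both $g$ and $\nabla$ to reduce the condition $\nabla g=0$ to a pointwise condition at $o=\pi(e)$, and finally invoke Corollary~\ref{atilde} to get the bijection.

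First I would observe that since $g$ and $\nabla$ are both $G$-invariant, the tensor $\nabla g$ is $G$-invariant, so $\nabla g=0$ if and only if $(\nabla g)_o=0$. To evaluate $(\nabla g)_o$, I would fix $Z,X,Y\in\mm$ and choose extensions $W^X,W^Y\in\mathfrak{X}(M)$ of $X,Y\in T_oM$ with $[Z^+,W^X]_o=[Z^+,W^Y]_o=0$, exactly as in the proof of Lemma~\ref{le_comoderivar}. Then \eqref{alphafacil} gives $\alpha_{_\nabla}(Z,X)=\nabla_Z W^X$ and $\alpha_{_\nabla}(Z,Y)=\nabla_Z W^Y$ at $o$. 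Writing out the defining identity of $\nabla g$,
\begin{equation*}
(\nabla_Z g)(X,Y)=Z\bigl(g(W^X,W^Y)\bigr)\big|_o-g(\alpha_{_\nabla}(Z,X),Y)-g(X,\alpha_{_\nabla}(Z,Y)).
\end{equation*}

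The crucial step is to show that the first term vanishes. Since $g$ is $G$-invariant, the fundamental vector field $Z^+$ is Killing, so $L_{Z^+}g=0$; consequently
\begin{equation*}
Z^+\bigl(g(W^X,W^Y)\bigr)=g([Z^+,W^X],W^Y)+g(W^X,[Z^+,W^Y]),
\end{equation*}
and evaluating at $o$ the right-hand side vanishes by the choice of extensions. Therefore $(\nabla_Z g)(X,Y)=-g(\alpha_{_\nabla}(Z,X),Y)-g(X,\alpha_{_\nabla}(Z,Y))$, which vanishes for every $X,Y,Z\in\mm$ exactly when $\alpha_{_\nabla}(Z,-)\in\sof(\mm,g)$ for every $Z\in\mm$. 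This proves the first (\lq\lq iff\rq\rq) assertion.

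For the second assertion, Corollary~\ref{atilde} already provides a bijection between $G$-invariant affine connections and $\hom_{\mathfrak h}(\mm\otimes\mm,\mm)$, equivalently $\hom_{\mathfrak h}(\mm,\hom(\mm,\mm))$. The metric condition just established restricts the image of $\alpha_{_\nabla}(X,-)$ to the subspace $\sof(\mm,g)\subset\hom(\mm,\mm)$, so the bijection restricts to one onto $\hom_{\mathfrak h}(\mm,\sof(\mm,g))$. The equivalent description in terms of $\hom_{\mathfrak h}(\mm,\Lambda^2\mm)$ follows from the $\hh$-module isomorphism $\Lambda^2\mm\simeq\sof(\mm,g)$ recalled immediately before the statement. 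The main (and only) subtle point is the vanishing of $Z(g(W^X,W^Y))_o$, which requires combining the Killing property of the fundamental vector field with the vanishing-bracket extension; everything else is a formal translation via Nomizu's correspondence.
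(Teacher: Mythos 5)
Your proof is correct. The paper does not actually prove this lemma --- it defers to an external reference --- but your argument is exactly the natural self-contained one using the paper's own machinery: the vanishing-bracket extensions of Lemma~\ref{le_comoderivar} together with \eqref{alphafacil} reduce $(\nabla_Zg)(X,Y)$ at $o$ to $-g(\alpha_{_\nabla}(Z,X),Y)-g(X,\alpha_{_\nabla}(Z,Y))$ once the Killing property of $Z^+$ kills the derivative term, and the bijection then follows from Corollary~\ref{atilde} by tensor-hom adjunction since $\sof(\mm,g)$ is an $\hh$-submodule of $\mathrm{End}(\mm)$. This is also consistent with how the paper uses the criterion in practice, e.g.\ when verifying that $\alpha^g$ in Theorem~\ref{le_laestructura!} corresponds to a metric connection.
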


It will be specially useful for our purposes to also set Nomizu's Theorem for the case of invariant metric  connections with skew  torsion on $G/H$. 
Let us denote such set by $\mathcal{C}_{S}(G/H,g)$.
\begin{lemma}  \label{igualdadconjuntos} 
Let $(G/H, g)$ be a reductive homogeneous Riemannian manifold with reductive decomposition $\mathfrak{g}=\mathfrak{h}\oplus\mathfrak{m}$
and $H$ connected. There is a one-to-one correspondence between $\mathcal{C}_{S}(G/H,g)$ and $\mathrm{Hom}_{\mathfrak{h}}(\Lambda^{3} \mathfrak{m},\R)$ ($\R$ as trivial $\hh$-module). 
\end{lemma}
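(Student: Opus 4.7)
The plan is to use the version of Nomizu's Theorem stated just above, which identifies invariant metric connections with $\hom_{\hh}(\mm,\Lambda^2\mm)$, and to carve out the subset corresponding to skew-torsion ones by the standard device of subtracting the Levi-Civita connection. Concretely, for any $\nabla\in\CC_S(G/H,g)$, I would consider $D:=\nabla-\nabla^g$. Since $\nabla^g$ is itself a $G$-invariant metric connection, $D$ is an $\Ad(H)$-invariant $(1,2)$-tensor at $o$ and $D(X,-)\in\sof(\mm,g)$ for every $X\in\mm$. Moreover, the torsion-freeness of $\nabla^g$ translates into $\alpha_{_{\nabla^g}}(X,Y)-\alpha_{_{\nabla^g}}(Y,X)=[X,Y]_{\mm}$, so the torsion of $\nabla$ simplifies to $T^{\nabla}(X,Y)=D(X,Y)-D(Y,X)$.

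Next, I would introduce the trilinear form $\Phi(X,Y,Z):=g(D(X,Y),Z)$, which determines $D$ (and hence $\nabla$) by nondegeneracy of $g$. Metric compatibility reads $\Phi(X,Y,Z)=-\Phi(X,Z,Y)$, so $\Phi$ is already antisymmetric in its last two arguments. From \eqref{tos}, the torsion form $\omega_\nabla$ becomes
\[
\omega_\nabla(X,Y,Z)=\Phi(X,Y,Z)-\Phi(Y,X,Z).
\]
The skew-torsion condition demands $\omega_\nabla$ be totally alternating; antisymmetry in $(X,Y)$ is automatic from the very definition, so the only nontrivial requirement is antisymmetry in $(Y,Z)$. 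Writing $\omega_\nabla(X,Y,Z)+\omega_\nabla(X,Z,Y)=0$ and using the antisymmetry of $\Phi$ in its last two slots yields $\Phi(Y,X,Z)=-\Phi(Z,X,Y)$. Combined with $\Phi(X,Y,Z)=-\Phi(X,Z,Y)$, this forces $\Phi$ to be totally alternating, and then $\omega_\nabla=2\Phi$.

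This sets up the claimed bijection $\nabla \leftrightarrow \omega_\nabla$: the forward map sends $\nabla\in\CC_S(G/H,g)$ to its $\hh$-invariant torsion $3$-form $\omega_\nabla\in\hom_{\hh}(\Lambda^3\mm,\R)$; conversely, any such $\omega$ produces via $g$ a tensor $D\in\hom_{\hh}(\mm\otimes\mm,\mm)$ with $D(X,-)\in\sof(\mm,g)$, and then $\nabla:=\nabla^g+\tfrac12 T$, with $g(T(X,Y),Z)=\omega(X,Y,Z)$, is the desired invariant metric connection with skew torsion $\omega$. All equivariance statements transfer through these identifications because $g$ is $\Ad(H)$-invariant, hence $\hh$-invariant. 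No real obstacle is anticipated: the only delicate point is the purely linear-algebraic passage from antisymmetry in two overlapping pairs of arguments to full alternation, which is precisely the short manipulation highlighted above.
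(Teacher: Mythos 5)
Your proposal is correct and follows essentially the same route as the paper: the map you build from $\Phi(X,Y,Z)=g(D(X,Y),Z)$ is exactly the paper's $\Theta(\nabla)(X\wedge Y\wedge Z)=g(\alpha_{_\nabla}(X,Y)-\alpha^g(X,Y),Z)$, with injectivity from nondegeneracy of $g$ and surjectivity via $\nabla=\nabla^g+\tfrac12 T$. The only difference is that you spell out the linear-algebra step showing that metric compatibility plus total skewness of the torsion forces $\Phi$ itself to be alternating (with $\omega_{_\nabla}=2\Phi$), a point the paper leaves implicit when asserting $\Theta(\nabla)\in\hom_{\hh}(\Lambda^3\mm,\R)$.
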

\begin{proof}
Let us consider the map $\Theta: \mathcal{C}_{S}(G/H,g) \rightarrow \mathrm{Hom}_{\mathfrak{h}}(\Lambda^{3} \mathfrak{m}, \R)$ given by
\begin{align*}
\Theta(\nabla)(X\wedge Y \wedge Z)=g(\alpha_{_{\nabla}}(X,Y)-\alpha^g(X,Y),Z),
\end{align*} 
where $\alpha^g=\alpha_{_{\nabla^g}}$ denotes the bilinear map associated with the Levi-Civita connection by Nomizu's Theorem. It is easy to conclude that $\Theta(\nabla)$ is an $\hh$-module homomorphism, since $\alpha_{_{\nabla}}$, $\alpha^g$ and $g$ so are. The injectivity of $\Theta$ follows from    the nondegeneracy of $g$.
Finally, given $\omega\in  \mathrm{Hom}_{\mathfrak{h}}(\wedge^{3} \mathfrak{m}, \R)$, there exists a unique bilinear map $T\colon\mathfrak{m}\times\mathfrak{m}\rightarrow \mathfrak{m}$ such that $\omega(x\wedge y \wedge z)=g(T(x,y),z)$  for any $x,y,z\in\mathfrak{m}$.   We also denote by $\omega$ and $T$ the natural extensions of $\omega$ and $T$ to the whole manifold $G/H$ by using the $G$-invariance. Now,   the invariant connection $\nabla= \nabla^g+(1/2)T$ has torsion $T$,  is compatible with the metric $g$, and satisfies $\Theta(\nabla)=1/2\omega$. \end{proof}    

Our next target will be to compute the dimensions of 
\begin{equation}\label{eq_dimensionesqueridas}
\hom_{\mathfrak h}(\mathfrak m\otimes \mathfrak m,\mathfrak m),\quad
\hom_{\mathfrak h}(\mathfrak m,\mathfrak m\wedge\mathfrak m),\quad
\hom_{\mathfrak h}(\mathfrak m\wedge\mathfrak m\wedge\mathfrak m,\R),
\end{equation}
 for each    reductive decomposition  related to any  3-Sasakian homogeneous manifold $G/H$ in Theorem~\ref{th_lasSashomogeneas}. 
These vector spaces will be useful to describe, respectively, the set of $G$-invariant affine connections on $G/H$, those ones which are besides compatible with an invariant metric, and those ones that have also skew  torsion.
 We need  precise descriptions of such reductive decompositions as well as of the  involved modules, which will be provided case-by-case in the next section. As  mentioned in  the Introduction, except for the case $\mathrm{SU}(m)/ S(\mathrm{U}(m-2)\times \mathrm{U}(1))$, the dimensions of the above three vector spaces turn out to be the same for all $3$-Sasakian homogeneous manifolds  (see Proposition~\ref{pr_descriunificada} for a description of the first space).

We finish this section with some ad-hoc facts involving representation theory of complex Lie algebras.
(A textbook including the algebraic concepts relative to Lie algebras and their representations is \cite{Draper:Humphreysalg}, for instance.)
 These results   will be used in Section~\ref{homogeneas} to compute the   dimensions in \eqref{eq_dimensionesqueridas}. Although well-known, we include them here for the sake of completeness.  As usual, we denote by $pV$, $p\in\mathbb N$, to the module which is a direct sum of $p$ submodules all of them isomorphic to $V$.
\begin{lemma}\label{le_comocontar}
Let  $V$ and $W$ be (finite-dimensional) modules for a complex Lie algebra $L$. 
\begin{itemize}
\item[i)] If $\,V$ and $\,W$ are irreducible, then the vector space  $\hom_L(V,W)$ is   one-dimensional  if $\,V$ and $W$ are isomorphic $L$-modules and it is $0$ otherwise.
\item[ii)] If $\,V\cong (\oplus_{_{i}}p_iV_i)\oplus(\oplus_{_{j}}n_jU_j)$ and $W\cong (\oplus_{_{k}}q_kW_k)\oplus(\oplus_{_{j}}m_jU_j)$, with $V_i$, $W_k$ and $U_j$ (finite-dimensional) irreducible $L$-modules
which are not isomorphic, then
$$\dim\hom_L(V,W)=\sum_{j}n_jm_j.$$
\end{itemize}
\end{lemma}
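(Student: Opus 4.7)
The plan is to derive part (ii) as a direct bookkeeping consequence of part (i), so the whole argument reduces to a careful statement of Schur's lemma and a decomposition of $\hom_L(V,W)$ into blocks.

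For part (i), I would take any nonzero $\varphi\in\hom_L(V,W)$ and observe that $\ker\varphi\subset V$ and $\mathrm{Im}(\varphi)\subset W$ are $L$-submodules. By irreducibility of $V$ and $W$ (and the fact that $\varphi\neq 0$) these forces $\ker\varphi=0$ and $\mathrm{Im}(\varphi)=W$, so $\varphi$ is an isomorphism. Hence $\hom_L(V,W)=0$ when $V\not\cong W$. When $V\cong W$, I would fix one isomorphism $\psi_0$ and, given any $\varphi\in\hom_L(V,W)$, consider $\psi_0^{-1}\circ\varphi\in \mathrm{End}_L(V)$. Working over $\mathbb{C}$ and using that $V$ is finite-dimensional, this endomorphism admits an eigenvalue $\lambda\in\mathbb{C}$; then $\psi_0^{-1}\circ\varphi-\lambda\,\mathrm{id}_V$ has nonzero kernel and lies in $\mathrm{End}_L(V)$, so by the first part it must be zero. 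Therefore $\varphi=\lambda\,\psi_0$, proving $\dim\hom_L(V,W)=1$.

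For part (ii), I would use the canonical identification
\[
\hom_L(V,W)\;\cong\;\bigoplus_{a,b}\hom_L(S_a,T_b),
\]
where $S_a$ runs over the irreducible summands of $V$ (with multiplicity) and $T_b$ runs over those of $W$. By part (i), a summand $\hom_L(S_a,T_b)$ is one-dimensional exactly when $S_a\cong T_b$ as $L$-modules and zero otherwise. Under the hypothesis that the families $\{V_i\}$, $\{W_k\}$ and $\{U_j\}$ consist of pairwise non-isomorphic irreducibles, the only matching pairs occur when both $S_a$ and $T_b$ are copies of the same $U_j$; this contributes a block of size $n_j\times m_j$ of one-dimensional homs, hence $n_jm_j$ to the dimension. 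Summing over $j$ yields $\dim\hom_L(V,W)=\sum_j n_jm_j$.

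No real obstacle is expected here: the only subtlety is making sure to invoke the algebraic closedness of the base field (so that eigenvalues exist) in the one-dimensionality part, and to note explicitly that the decomposition of $\hom_L(V,W)$ into blocks is compatible with the direct sum decompositions of $V$ and $W$ because the projections and inclusions are themselves $L$-module homomorphisms. Both points are standard and require no further calculation.
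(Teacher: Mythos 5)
Your proof is correct and is exactly the argument the paper has in mind: it simply invokes Schur's lemma for part (i) and notes that part (ii) is an immediate consequence via the block decomposition of $\hom_L(V,W)$, which is what you spell out. No issues.
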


Schur's lemma gives item i), while item ii) is an immediate consequence. If besides $L$ is semisimple, all finite-dimensional $L$-modules $V$ and $W$ are completely reducible (sum of irreducible submodules) and then we can always apply item ii) to them. In particular, $\dim\hom_L(V,W)=\dim\hom_L( W,V)$.
 So, for complex representations, such dimensions are computed by finding the irreducible submodules with their corresponding multiplicities.   We denote by   $S^nW$ and $\Lambda^nW$   the $n$th symmetric and alternating tensor power of the module $W$, respectively. 

 \begin{lemma}\label{le_contando}
Let  $U$ and $W$ be modules for a complex Lie algebra $L$.
 
\begin{itemize}
\item[a)] $U\otimes U\cong S^2U\oplus \Lambda^2U$.
 \item[b)] 
$
\Lambda^n(U\oplus W)\cong\oplus_{k=0}^n\,\Lambda^{n-k}U\otimes \Lambda^kW\ 
$
and
$
\ S^n(U\oplus W)\cong\oplus_{k=0}^n\,S^{n-k}U\otimes S^kW.  
$
\item[c)] In particular, for $\mathbb C$ the trivial $L$-module and $ W=2U\oplus 3\mathbb C$,
\begin{equation}\label{eq_landa3}
\begin{array}{l}
W\otimes W \cong 4(U\otimes U)\oplus 12 U\oplus 9\C , \\
\Lambda^2W\cong 3\Lambda^2U\oplus S^2U \oplus 6U\oplus 3\mathbb C, \\
\Lambda^3W\cong 2\Lambda^3U\oplus 2( \Lambda^2U\otimes U) \oplus 9\Lambda^2U\oplus 3S^2U\oplus 6U\oplus \mathbb C.
\end{array}
\end{equation}
\end{itemize}
\end{lemma}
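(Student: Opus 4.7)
The lemma is standard representation-theoretic bookkeeping, so I will organize the proof around three independent pieces: an isomorphism for the tensor square of a single module, a ``binomial formula'' for exterior and symmetric powers of direct sums, and then the particular expansion when $W=2U\oplus 3\C$.

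For part (a), the plan is to invoke the canonical decomposition $v\otimes w=\tfrac12(v\otimes w+w\otimes v)+\tfrac12(v\otimes w-w\otimes v)$, which is valid over a field of characteristic zero and defines $L$-module isomorphisms between $U\otimes U$ and the direct sum of its symmetric and skew-symmetric parts, since the symmetrization/antisymmetrization operators commute with the diagonal $L$-action. For part (b), I would exploit the fact that the exterior algebra functor turns direct sums into tensor products of graded algebras: there is a natural $L$-equivariant graded isomorphism $\Lambda^{\bullet}(U\oplus W)\cong\Lambda^{\bullet}U\otimes\Lambda^{\bullet}W$, and taking the $n$-th graded piece yields exactly $\oplus_{k=0}^n\,\Lambda^{n-k}U\otimes\Lambda^kW$. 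The analogous statement for the symmetric algebra $S^{\bullet}$ is proved identically.

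For part (c), the idea is purely computational: apply (b) iteratively to $W=2U\oplus 3\C$, using the trivial identifications $\Lambda^kU\otimes\C=\Lambda^kU$, $\C\otimes\C=\C$, and $\Lambda^k(p\C)=\binom{p}{k}\C$. Concretely, the tensor square is immediate by distributivity; for $\Lambda^2W$ I would first compute $\Lambda^2(2U)$ via (b) applied to the splitting $2U=U\oplus U$, obtaining $\Lambda^2U\oplus(U\otimes U)\oplus\Lambda^2U$, and then rewrite $U\otimes U=S^2U\oplus\Lambda^2U$ by (a), getting $3\Lambda^2U\oplus S^2U$; combining with $2U\otimes 3\C=6U$ and $\Lambda^2(3\C)=3\C$ yields the claimed decomposition. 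The $\Lambda^3$ case is the same kind of expansion: $\Lambda^3(2U)=2\Lambda^3U\oplus 2(\Lambda^2U\otimes U)$ by (b), and then $\Lambda^2(2U)\otimes 3\C$, $2U\otimes\Lambda^2(3\C)$, and $\Lambda^3(3\C)=\C$ contribute the remaining summands.

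There is no genuine obstacle; the only thing to be careful about is the correct multiplicity bookkeeping when the same irreducible $U$ appears in two different summands of the direct sum, so that one does not accidentally confuse $\Lambda^2(U\oplus U)$ with $\Lambda^2U$. Writing the direct sum decomposition explicitly component-by-component before simplifying avoids this pitfall.
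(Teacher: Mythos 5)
Your proof is correct, and the computations in part (c) (including the multiplicity bookkeeping for $\Lambda^2(2U)$ and $\Lambda^3(2U)$) check out against the stated decompositions. The paper gives no proof of this lemma, presenting it as standard, and your argument via characteristic-zero (anti)symmetrization and the graded isomorphism $\Lambda^{\bullet}(U\oplus W)\cong\Lambda^{\bullet}U\otimes\Lambda^{\bullet}W$ is exactly the expected one.
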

We will check in Section~\ref{homogeneas} that for any 3-Sasakian homogeneous manifold  $M$,  the complexification of the $\hh$-module $\mm\cong T_oM$ is always  in the situation of item c). Moreover, under certain technical conditions, we will compute the  required dimensions in \eqref{eq_dimensionesqueridas} in a unified way, as follows.

\begin{corollary}\label{co_contar}
Let $L$ be a complex Lie algebra and $U$ an irreducible nontrivial $L$-module such that   
$\dim\hom_{L}(S^2U,U)=\dim\hom_{L}(   \Lambda^2U, U ) =\dim\hom_{L}(  S^2 U,\C )=0$, and also 
$\dim\hom_{L}(\Lambda^2U,\C)=1$. Then, for the $L$-module $ W=2U\oplus 3\mathbb C$,  we have
$$
\begin{array}{l}
 \dim \hom_{L}(  W\otimes W,W ) =63,\quad
  \dim \hom_{L}(W,\Lambda^2 W)=30. 
\end{array}
$$
If, besides,  neither $ \Lambda^3U$ nor $\Lambda^2U\otimes U$ contains any trivial submodule (that is, a submodule isomorphic to $\C$), then  
$$\dim \hom_{L}(\Lambda^3 W,\mathbb C)=10.$$
\end{corollary}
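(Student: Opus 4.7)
The strategy is a direct, bookkeeping-style application of Lemma~\ref{le_comocontar} ii) to the three decompositions gathered in Eq.~\eqref{eq_landa3} of Lemma~\ref{le_contando}. Since we have explicit decompositions of $W\otimes W$, $\Lambda^{2}W$ and $\Lambda^{3}W$ into pieces of the form $\Lambda^{k}U$, $S^{k}U$, $U$, $\mathbb{C}$ (with known multiplicities), it is enough to know, for each of those pieces $V$, the dimensions $\dim\hom_{L}(V,U)$ and $\dim\hom_{L}(V,\mathbb{C})$, or dually $\dim\hom_{L}(U,V)$ and $\dim\hom_{L}(\mathbb{C},V)$. The hypotheses hand these to us: by assumption $\hom_{L}(S^{2}U,U)=\hom_{L}(\Lambda^{2}U,U)=\hom_{L}(S^{2}U,\mathbb{C})=0$ and $\dim\hom_{L}(\Lambda^{2}U,\mathbb{C})=1$; by Schur's Lemma (Lemma~\ref{le_comocontar} i)) $\dim\hom_{L}(U,U)=\dim\hom_{L}(\mathbb{C},\mathbb{C})=1$ and, since $U$ is irreducible and nontrivial, $\hom_{L}(U,\mathbb{C})=\hom_{L}(\mathbb{C},U)=0$. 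The symmetric identities $\dim\hom_{L}(\mathbb{C},V)=\dim\hom_{L}(V,\mathbb{C})$ needed on the way follow from full reducibility applied to each irreducible summand.

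First I would compute $\dim\hom_{L}(W\otimes W,W)$. Using $W=2U\oplus 3\mathbb{C}$ and the first line of \eqref{eq_landa3}, I split $W\otimes W\cong 4S^{2}U\oplus 4\Lambda^{2}U\oplus 12U\oplus 9\mathbb{C}$, and expand $\hom_{L}(\cdot,W)$ as $2\hom_{L}(\cdot,U)\oplus 3\hom_{L}(\cdot,\mathbb{C})$ for each summand. Plugging in the values above yields $4\cdot 0+4\cdot 3+12\cdot 2+9\cdot 3=63$, as claimed.

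Next I would do $\dim\hom_{L}(W,\Lambda^{2}W)$ in the same fashion: by the second line of \eqref{eq_landa3}, $\Lambda^{2}W\cong 3\Lambda^{2}U\oplus S^{2}U\oplus 6U\oplus 3\mathbb{C}$. Dualizing, $\dim\hom_{L}(U,\Lambda^{2}U)=\dim\hom_{L}(\Lambda^{2}U,U)=0$, $\dim\hom_{L}(U,S^{2}U)=\dim\hom_{L}(S^{2}U,U)=0$, $\dim\hom_{L}(\mathbb{C},\Lambda^{2}U)=\dim\hom_{L}(\Lambda^{2}U,\mathbb{C})=1$, $\dim\hom_{L}(\mathbb{C},S^{2}U)=0$, and $\hom_{L}(\mathbb{C},U)=0$. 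The calculation then gives $2(6)+3(3+3)=30$.

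Finally, for $\dim\hom_{L}(\Lambda^{3}W,\mathbb{C})$ I use the third line of \eqref{eq_landa3},
\[
\Lambda^{3}W\cong 2\Lambda^{3}U\oplus 2(\Lambda^{2}U\otimes U)\oplus 9\Lambda^{2}U\oplus 3S^{2}U\oplus 6U\oplus\mathbb{C}.
\]
The additional hypothesis, that neither $\Lambda^{3}U$ nor $\Lambda^{2}U\otimes U$ has a trivial submodule, precisely kills the first two terms when mapping to $\mathbb{C}$. Together with $\dim\hom_{L}(\Lambda^{2}U,\mathbb{C})=1$, $\hom_{L}(S^{2}U,\mathbb{C})=0$, $\hom_{L}(U,\mathbb{C})=0$ and $\dim\hom_{L}(\mathbb{C},\mathbb{C})=1$, the total reads $9\cdot 1+1=10$. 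The only non-routine point — and therefore the sole place deserving extra care — is the switch between $\hom_{L}(\mathbb{C},\Lambda^{2}U)$ and $\hom_{L}(\Lambda^{2}U,\mathbb{C})$, which I would justify once, at the start, by noting that each summand appearing is completely reducible into finite-dimensional irreducibles, so that the count of trivial constituents coincides from either side.
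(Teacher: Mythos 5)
Your proof is correct and follows essentially the same route as the paper: both arguments use the decompositions of $W\otimes W$, $\Lambda^2W$ and $\Lambda^3W$ from Lemma~\ref{le_contando}\,c), translate the hypotheses into multiplicities of $U$ and of the trivial module $\C$ in each piece, and conclude via Lemma~\ref{le_comocontar}\,ii). The only cosmetic difference is that you distribute the count summand-by-summand (using additivity of $\hom$ and the symmetry $\dim\hom_L(V,\C)=\dim\hom_L(\C,V)$, which you rightly flag as resting on complete reducibility) whereas the paper first totals the multiplicities of $\C$ and $U$; the arithmetic is identical.
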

\begin{proof} Bearing in mind that $W\otimes W\cong4S^2U\oplus 4\Lambda^2U\oplus12U\oplus9\C$, our assumptions imply that $W\otimes W$ contains $4+9=13$ copies of $\C$ and 12 copies of $U$. Therefore,   Lemma~\ref{le_comocontar}\, ii) gives
$\dim \hom_{L}(  W\otimes W,W )=12\cdot 2+ 13\cdot 3 =63$. In a similar way, $\Lambda^2W\cong 3\Lambda^2U\oplus S^2U \oplus 6U\oplus 3\mathbb C$ contains $3+3=6$ copies of $\C$  and 6 copies of $U$, so that 
$\dim \hom_{L}(W,\Lambda^2 W)=6\cdot2 +6\cdot3=30.$
The third dimension is immediately deduced from Lemma~\ref{le_contando}\,c).
\end{proof}

\section{3-Sasakian homogeneous manifolds}\label{homogeneas}

Now we provide  an explicit and detailed construction of all $3$-Sasakian homogeneous manifolds  in algebraic terms.
These precise descriptions will be as  self-contained as possible and will be used throughout the text. The common information is summarized in Theorem~\ref{le_laestructura!}.
The following subsections contain the different reductive decompositions, module decompositions  and the computations of the dimensions in \eqref{eq_dimensionesqueridas}, for each case in Theorem~\ref{th_lasSashomogeneas}.

\subsection{The algebraical data}\label{sec_data}

The starting point is that each $3$-Sasakian homogeneous manifold is the
total space of a  $\mathrm{Sp}(1)$ or a $\SO(3)$ principal bundle over a symmetric space, \cite[Proposition 13.4.5]{galickiboyer}. 
 More specifically, for a $3$-Sasakian homogeneous manifold $M=G/H$, all the leaves of the $3$-dimensional integrable distribution  ${Q}^\perp$ are diffeomorphic and the 
orbit space  $M/\mathcal{F}_{Q}$ is a
quaternionic K\"ahler $G$-homogeneous manifold \cite[Prop. 13.4.5]{galickiboyer}. Moreover, the natural projection $M\to M/\mathcal{F}_{Q}$ is a principal bundle with structure group $F=\Sp(1)$ or $F=\SO(3)$ and $M/\mathcal{F}_{Q}$ has positive scalar curvature \cite[Sec.~4]{BoGaMan94}. Therefore, from a classical result of Alekseevsky, the orbit space $M/\mathcal{F}_{Q}$ must be symmetric \cite{Ale}. Thus, the natural projection on the orbit space of every $3$-Sasakian homogeneous manifold admit the form
$$M=G/H\to M/\mathcal{F}_{Q}=M/F=G/(H\cdot F).
$$

The reductive decomposition related to the $3$-Sasakian homogeneous manifold $M=G/H$ is introduced from the $\mathbb Z_2$-grading related to the corresponding symmetric space $M/\mathcal{F}_{Q}$.
To be precise,  let us consider the $\mathbb Z_2$-grading on $\g$, or symmetric decomposition   $\g=\g_{0}\oplus\g_{1}$ (that is, $[\g_{i},\g_{j}]\subset \g_{i+j}$). The even part $\g_{0}= \spf(1)\oplus\hh$ is sum of two   ideals,
which immediately implies that $\g=\hh\oplus\mm$ is a reductive decomposition for $\mm=\spf(1)\oplus\g_{1}$. 
Theorem~\ref{le_laestructura!} shows how to construct the 3-Sasakian   structure starting from the Lie algebraic data (see Definition~\ref{def_data}).

In order to simplify   computations, we would like to point out that, except for the projective spaces $\R P^{4n+3}=\mathrm{Sp}(n+1)/ \mathrm{Sp}(n)\times \mathbb{Z}_{2}$, for every $3$-Sasakian homogeneous manifold $G/H$, the Lie subgroup $H$ is connected. Therefore, from now on, {\it we assume that $H$ is connected} and the projective cases will be considered in Remark~\ref{remark_proyectivo} in a separate way. 

Before the next definition, recall that if $W_i$ is an $L_i$-module, $i=1,2$, for $L_i$ any Lie algebra, then $W_1\otimes W_2$ is an $L_1\oplus L_2$-module for the action $(x_1+x_2)\cdot w_1\otimes w_2=(x_1\cdot w_1)\otimes w_2+w_1\otimes (x_2\cdot w_2)$.

\begin{definition}\label{def_data} 
A \emph{$3$-Sasakian data} is a pair of Lie algebras $(\g, \hh)$ such that 
\begin{itemize}
\item
$\g=\g_{0}\oplus\g_{1}$  is a $\mathbb{Z}_{2}$-graded compact  simple  Lie algebra (that is, $[\g_{i},\g_{j}]\subset \g_{i+j}$, $i, j\in\{0,1\}$) whose even part is   sum of two   ideals,
\begin{equation}\label{eq_imp1}
\g_{0}= \spf(1)\oplus\hh;
\end{equation}
 \item
and there exists an  $\hh^\C$-module  $W$ such that the complexified    
  $\g_{0}^\C$-module $\g_{1}^\C$ is isomorphic to the tensor product of the natural $\spf(1)^\C\cong \mathfrak{sl}(2, \C)$-module $\C^2$
  given by multiplication on the columns and $W$, that is,
\begin{equation}\label{eq_imp2}
\g_{1}^\C\cong\C^2\otimes W.
\end{equation}
 \end{itemize}
\end{definition}
In particular,  $\dim_{\R}\mm=4n+3$ holds for $n=\dim_\C W$. A reader interested in the (nonassociative) algebraic structure attached to $W$ can consult Remark~\ref{re_sts}. Note that   Eq.~\eqref{eq_imp2} can be obtained as a consequence of Eq.~\eqref{eq_imp1} under certain conditions. For instance, this is the case if the algegra $\hh$ is semisimple and the $\hh$-module $\g_{1}$ is irreducible, as we will prove in Remark~\ref{remark6}. These conditions will be satisfied in most of our cases.

\begin{theorem}\label{le_laestructura!}
Let $M^{4n+3}=G/H$ be a homogeneous space with $H$ connected such that the pair $(\g , \hh)$ is a $3$-Sasakian data,
being $\g$ and $\hh$ the Lie algebras of $G$ and $H$ respectively.
Take $\mm= \spf(1)\oplus \g_{1}$ (for the subalgebra of type $\spf(1)$ in \eqref{eq_imp1}). 
\begin{enumerate}[i)]
\item The decomposition  $\g=\hh\oplus\mm$ is a reductive decomposition.

\item Let $g$ be the Riemannian metric on $M$ corresponding to the $\mathrm{Ad}(H)$-invariant inner product on $\mm$ given by
\begin{equation}\label{eq_nuestrag}
g\vert_{\spf(1)}=- \frac1{4(n+2)} \kappa,\quad g\vert_{\g_{1} }=-\frac1{8(n+2)}\kappa,\quad g\vert_{\spf(1)\times\g_{1} }=0,
\end{equation}
where $\kappa$ is the Killing form of $\g$. Then, the  multiplication $\alpha^g\colon\mm\times\mm\to\mm$ corresponding via Nomizu's Theorem   with the Levi-Civita connection $\nabla^g$ satisfies 
 \begin{equation}\label{eq_alfadeLevi}
 \alpha^g(X,Y)=\left\{\begin{array}{ll}
 0&\text{if $\,X\in\spf(1)$ and $\,Y\in\g_{1}$},\\
 \frac12[X,Y]_\mm&\text{if either $\,X,Y\in\spf(1)$ or $\,X,Y\in\g_{1}$},\\
{ [X,Y]}_\mm&\text{if $\,X\in\g_{1}$ and $\,Y\in\spf(1)$}.\\
 \end{array}
 \right.
\end{equation}

 \item  Let  $\{\xi_i\}_{i=1}^3$ be the $G$-invariant   vector fields on $M$ corresponding to the following basis of $\spf(1)=\suf(2)$, \begin{equation}\label{eq_losxis}
\xi_1=\left(\begin{array}{cc}\mathbf{i}&0\\0&-\mathbf{i} \end{array}\right),\quad
\xi_2=\left(\begin{array}{cc}0&-1\\ 1&0 \end{array}\right),\quad
\xi_3=\left(\begin{array}{cc}0&-\mathbf{i}\\ -\mathbf{i}&0 \end{array}\right).
\end{equation}
Then, the endomorphism field $\varphi_{i}=-\nabla^g \xi_{i}$ satisfies
\begin{equation}\label{derivada de xi}
\varphi_i\vert_{\spf(1)}=\frac12\ad \xi_i,\quad \varphi_i\vert_{\g_{1} }= \ad \xi_i ,
\end{equation}
for each $i=1,2,3$, and   ${\estS}_i=\{\xi_i,\eta_i,\varphi_i\}$   is a Sasakian structure for $\eta _i=g(\xi_{i},-)$. In addition, ${\estS}_1$, $ {\estS}_2$ and ${\estS}_3$ satisfy the compatibility  conditions \eqref{eq_compatibilityconditions}, 
hence providing a $3$-Sasakian structure on $M$.
\end{enumerate}
\end{theorem}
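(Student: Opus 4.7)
Part (i) is immediate: the two ideals $\spf(1)$ and $\hh$ of $\g_{0}$ commute because their intersection is trivial, so $[\spf(1),\hh]=0$; combined with $[\g_{0},\g_{1}]\subseteq\g_{1}$ from the $\mathbb Z_2$-grading, this gives $[\hh,\mm]\subseteq\mm$, i.e., reductiveness. For part (ii), the bilinear form $g$ is $\mathrm{Ad}(H)$-invariant: on $\spf(1)$ because $H$ acts trivially there (by (i) and connectedness of $H$), and on $\g_{1}$ because $\kappa$ itself is $\mathrm{Ad}(G)$-invariant. To identify the Levi-Civita $\alpha^{g}$, rather than extract it from a Koszul-type formula, I would just verify that the proposed \eqref{eq_alfadeLevi} satisfies the two defining properties: torsion-freeness $\alpha^{g}(X,Y)-\alpha^{g}(Y,X)=[X,Y]_{\mm}$, and metric compatibility $g(\alpha^{g}(X,Y),Z)+g(Y,\alpha^{g}(X,Z))=0$. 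Both reduce to a case analysis on which of $\spf(1)$ or $\g_{1}$ contains $X,Y,Z$, using $[\spf(1),\spf(1)]\subseteq\spf(1)$, $[\spf(1),\g_{1}]\subseteq\g_{1}$, $[\g_{1},\g_{1}]\subseteq\g_{0}$, and the $\ad$-invariance of $\kappa$. The factor-of-two discrepancy between the scalings of $g$ on $\spf(1)$ and on $\g_{1}$ is precisely what forces the asymmetric coefficients $\tfrac12$ versus $1$ appearing in the different cases of \eqref{eq_alfadeLevi}.

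For part (iii), each $\xi_{i}\in\spf(1)$ is $\mathrm{Ad}(H)$-fixed (by (i)), hence extends to a $G$-invariant vector field on $M$. Applying Lemma~\ref{le_comoderivar} to the $(1,0)$-tensor $\xi_{i}$ gives $(\nabla^{g}_{Z}\xi_{i})_{o}=\alpha^{g}(Z,\xi_{i})$; substituting \eqref{eq_alfadeLevi} in the two cases $Z\in\spf(1)$ and $Z\in\g_{1}$ yields \eqref{derivada de xi} at once. To conclude that $\{\xi_{i},\eta_{i},\varphi_{i}\}$ is a Sasakian structure, I would successively verify: (a) the unit norm $g(\xi_{i},\xi_{i})=1$, by computing $\kappa(\xi_{i},\xi_{i})=\tr_{\spf(1)}(\ad\xi_{i})^{2}+\tr_{\g_{1}}(\ad\xi_{i})^{2}$ using that $\spf(1)$ acts on $\g_{1}^{\C}\cong\C^{2}\otimes W$ only through the first tensor factor (the normalizing constant $-\tfrac1{4(n+2)}$ has been chosen precisely so that this computation returns $1$); (b) the Killing character of $\xi_{i}$, equivalent to skew-symmetry of the endomorphism $Z\mapsto\alpha^{g}(Z,\xi_{i})$, immediate from \eqref{eq_alfadeLevi} together with $\ad$-invariance of $\kappa$ on $\g_{1}$; and (c) the Sasakian identity
$$
(\nabla^{g}_{X}\varphi_{i})(Y)=g(X,Y)\xi_{i}-\eta_{i}(Y)X,
$$
computed via Lemma~\ref{le_comoderivar} as $(\nabla^{g}_{X}\varphi_{i})(Y)=\alpha^{g}(X,\varphi_{i}(Y))-\varphi_{i}(\alpha^{g}(X,Y))$ and checked on the four position-cases for $(X,Y)$. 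Finally, the compatibility conditions \eqref{eq_compatibilityconditions} reduce to the $\suf(2)$-bracket relations among the matrices \eqref{eq_losxis}, together with the already-established orthonormality.

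The main obstacle will be the Sasakian identity (c) in the case $X,Y\in\g_{1}$: here the right-hand side equals $g(X,Y)\xi_{i}$, while the left-hand side expands into $[\xi_{i},\alpha^{g}(X,Y)]-\alpha^{g}(X,[\xi_{i},Y])$, an expression involving the iterated bracket $[\xi_{i},[X,Y]_{\spf(1)}]$, not manifestly a multiple of $\xi_{i}$. The key is that the $\spf(1)$-equivariant symmetric pairing $X,Y\mapsto[X,Y]_{\spf(1)}$ on $\g_{1}$ is controlled by the decomposition $\g_{1}^{\C}\cong\C^{2}\otimes W$ in Eq.~\eqref{eq_imp2}: the projection onto $\spf(1)^{\C}$ factors through the standard $\mathfrak{sl}(2,\C)$-equivariant map $\C^{2}\otimes\C^{2}\to\spf(1)^{\C}$ tensored with an $\hh$-invariant bilinear form on $W$. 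It is precisely here that condition \eqref{eq_imp2}, and with it the full $3$-Sasakian content of the hypothesis, enters decisively; the coefficients in \eqref{eq_alfadeLevi} and \eqref{eq_nuestrag} are calibrated so that after invoking the Jacobi identity the remaining constants collapse to give exactly $g(X,Y)\xi_{i}$ with the correct normalization.
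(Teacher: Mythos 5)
Your treatment of i), ii) and the first parts of iii) coincides in substance with the paper's: the paper likewise establishes \eqref{eq_alfadeLevi} by verifying that the candidate $\alpha^g$ is metric and torsion-free (using the associativity of $\kappa$ and $\kappa(\spf(1),\g_{1})=0$), obtains $\varphi_i$ as $-\alpha^g(\cdot,\xi_i)$, and derives the $g$-orthonormality of the $\xi_i$ from $\ad^2\xi_i\vert_{\g_{1}}=-\id$ and $\tr(\ad\xi_i\ad\xi_j\vert_{\g_{1}})=0$, which is where \eqref{eq_imp2} first enters. Up to that point there is nothing to object to.

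Where you diverge is exactly the step you flag as the main obstacle, and there your argument has a gap. First, a slip: the pairing $(X,Y)\mapsto[X,Y]_{\spf(1)}$ on $\g_{1}$ is alternating, not symmetric; under $\g_{1}^\C\cong\C^2\otimes W$ it is the symmetric projection $\C^2\otimes\C^2\to S^2\C^2\cong\spf(1)^\C$ tensored with a \emph{skew} ($\hh$-invariant, symplectic) form on $W$. More seriously, the Schur-type reasoning you invoke only determines the shape of this pairing up to an undetermined invariant functional on $\Lambda^2W$; the entire content of the identity $\alpha^g(X,\varphi_iY)-\varphi_i(\alpha^g(X,Y))=g(X,Y)\xi_i$ lies in normalizing that functional against $\kappa\vert_{\g_{1}}$, and ``the remaining constants collapse'' is an assertion, not a computation. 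The paper's route avoids this entirely: the left-hand side $Z=\frac12[X,[\xi_i,Y]]_{\spf(1)}-\frac14[\xi_i,[X,Y]]$ lies in $\spf(1)$, where $\{\xi_j\}$ is $g$-orthonormal, so it suffices to check $\kappa(2Z,\xi_j)=\delta_{ij}\kappa(X,Y)$; this follows from the associativity of $\kappa$, $\kappa(\hh,\spf(1))=0$, and the anticommutation relations $\ad\xi_i\ad\xi_j+\ad\xi_j\ad\xi_i=-2\delta_{ij}\id$ on $\g_{1}$ already extracted from \eqref{eq_imp2}. If you wish to keep your representation-theoretic route, you must still carry out an equivalent normalization computation; as written, the decisive constant in the case $X,Y\in\g_{1}$ is unproven.
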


\begin{proof}
Item i) is clear.
For item ii), note that the inner product   on $\mm$ defined in \eqref{eq_nuestrag} is $\mathrm{Ad}(H)$-invariant,  so that it extends to a $G$-invariant Riemannian metric on $M$, also denoted by $g$.  Next, we show that $g(\alpha^g(X,Y),Z)+g(Y,\alpha^g(X,Z))=0$ for $X,Y,Z\in \mm.$ Indeed, the bilinear map $\alpha^g$ satisfies
\begin{equation}\label{alphacompleta}
\alpha^g (X,Y)=\frac{1}{2}[X^v , Y^v]+ \frac{1}{2}[X^h, Y^h]_{\spf(1)}+[X^h , Y^v],
\end{equation}
where the superscripts $v$ and $h$ denote the projections from $\mm$ on $\spf(1)$ and $\g_{1}$, respectively (note that $[X^h , Y^v]\in \g_{1}$). Now, from (\ref{alphacompleta}) and the associativity of the Killing form, one deduces that
\begin{align*}
g(\alpha^g (X,Y),Z)= & -\frac1{8(n+2)}\kappa \Big([X^v , Y^v]+[X^h, Y^h]_{\spf(1)}+[X^h , Y^v], Z\Big)\\  =&-g(\alpha^g (X,Z), Y),
\end{align*}
since $\kappa(\spf(1),\g_{1} )=0$.
Thus, the affine connection corresponding to $\alpha^g$ is compatible with the metric. Also, one easily shows that its torsion tensor vanishes identically.

The formulae in~\eqref{derivada de xi} for the endomorphism fields $\varphi_{i}=-\nabla^g \xi_{i}$ are a straightforward   computation from
(\ref{alphafacil}) and the explicit expression for $\alpha^g$. In order to deduce that every ${\estS}_{i}=\{\xi_i,\eta_i,\varphi_i\}$ for $i=1,2,3$, is a Sasakian structure, we are going to prove that the vector fields  $\{\xi_i\}_{i=1}^3$ are $g$-orthonormal. Note that $[\xi_{i}, \xi_{j}]= 2\epsilon_{ijk}\xi_{k}$. Now, from (\ref{eq_imp1}), we can compute
$
\kappa(\xi_{i}, \xi_{i})=\textrm{tr}\,(\textrm{ad}^{2}\xi_i )=
\textrm{tr}\,(\textrm{ad}^{2}\xi_i\vert_{\spf(1)} )+\textrm{tr}\,(\textrm{ad}^{2}\xi_i\vert_{\g_{1}})=-8+\textrm{tr} \,(\textrm{ad}^{2}\xi_i\vert_{\g_{1}}).
$
Take into account, by (\ref{eq_imp2}),    that the action of $\spf(1)^\C$ on $\g_{1}^\C\cong\C^2\otimes W$   is given by the matrix multiplication on column vectors, that is, 
$[\xi_i,\begin{pmatrix} x\\y\end{pmatrix}\otimes w]=\xi_i \begin{pmatrix} x\\y\end{pmatrix}\otimes w$. As $\xi_i^2=\begin{pmatrix}-1&0\\0&-1\end{pmatrix}$ for any $i$, we obtain $\textrm{ad}^2\xi_i\vert_{\g_{1}^\C}=-\id$,  and hence $\textrm{ad}^2\xi_i\vert_{\g_{1}}=-\id$. In particular, the Killing form takes the value $\kappa(\xi_i,\xi_i)=-8-\dim_\R \g_{1}=-8-4n$, so that each $\xi_i$ is a  $g$-unitary vector field.  
In a similar way,  for $i\neq j$, 
we get
$$
\kappa(\xi_{i}, \xi_{j})=\textrm{tr}\,(\textrm{ad}_{\xi_i} \circ \textrm{ad}_{\xi_j}\vert_{\spf(1)})+\textrm{tr}\,(\textrm{ad}_{\xi_i} \circ \textrm{ad}_{\xi_j}\vert_{\g_{1}})=0+\textrm{tr}\,(\textrm{ad}_{\xi_i} \circ \textrm{ad}_{\xi_j}\vert_{\g_{1}}).
$$
And again from (\ref{eq_imp2}), we have
$
\Big[\xi_i, \Big[\xi_j,\begin{pmatrix} x\\y\end{pmatrix}\otimes w\Big]\Big]=2 \epsilon_{ijk}\xi_k \begin{pmatrix} x\\y\end{pmatrix}\otimes w.
$
Therefore, it holds $\textrm{tr}_{\g_{1}^\C}\,(\textrm{ad}_{\xi_i} \circ \textrm{ad}_{\xi_j})=0$ and, as the trace does not depend on the field extension,  it yields $\kappa(\xi_{i}, \xi_{j})=0$.

Next, we want to show that every $\xi_{i}$, $i=1,2,3$, is a Killing vector field for $g$. The invariance properties simplify the computations, so we just need to check that for $X,Y\in \mm$, 
$$
g(\varphi_{i}(X),Y)+g(X,\varphi_{i}(Y))=0,
$$
but this formula can be easily deduced from (\ref{derivada de xi}). Finally, it remains to prove the following expression for the covariant derivative of $\varphi_i$,
$$
(\nabla^g_X\varphi_i)Y=g(X,Y)\xi_i-\eta_i(Y)X.
$$
By Lemma~\ref{le_comoderivar} and the invariance of the tensors, this is equivalent to prove, for $ X,Y\in \mm$, that
\begin{equation}\label{3sas}
\alpha^g(X,\varphi_i(Y))-\varphi_i(\alpha^g(X,Y))=g(X,Y)\xi_i-\eta_i(Y)X.  
\end{equation}
As explicit expressions for $\alpha^g$ and $\varphi_{i}$ are given in (\ref{eq_alfadeLevi}) and (\ref{derivada de xi}), respectively, we will check this equation case by case. First,  if  $X\in \spf(1)$ and $Y\in \g_{1}$, then both terms of (\ref{3sas}) vanish. Second, for a fixed $i\in\{1,2,3\}$, if $X \in \g_{1}$ and $Y=\xi_{j}\in \spf(1)$ with $j\neq i$, Eq.~(\ref{3sas}) reduces to check that
$$
\epsilon_{ijk}\alpha^g(X,\xi_{k})-\varphi_i([X,\xi_{j}])=-\epsilon_{ijk}\varphi_{k}(X)+ \varphi_{i}\circ \varphi_{j}(X)=0.
$$
But this holds   due to the equality $\epsilon_{ijk}\varphi_{k}= \varphi_{i}\circ \varphi_{j}$ on horizontal vectors for $j\neq i$. The subcase $j= i$ can be proved similarly. 
Third, (\ref{3sas}) is a straightforward computation when $X$ and $Y$ belongs to $\{\xi_j\}_{j=1}^3$. 
Finally, for $X,Y\in\g_{1}$, (\ref{3sas}) becomes 
$$
\alpha^g(X,\varphi_i(Y))-\varphi_i(\alpha^g(X,Y))=g(X,Y)\xi_i.
$$
As the left-side of this expression can be written as follows
$$
Z:=\frac 1 2 [X,[\xi_i,Y]]_{\spf(1)}-\frac14[\xi_i,[X,Y]]\in\spf(1),
$$
it suffices to prove   that $g(Z,\xi_j)=\delta_{ij}g(X,Y)$, or equivalently,
$\kappa(2Z,\xi_j)=\delta_{ij}\kappa(X,Y).$ But this is easy to check due to the associativity of the Killing form and the fact $\kappa(\hh,\spf(1))=0$. 
\end{proof}
\begin{remark}\label{re_lametrica}
\normalfont 
Theorem~\ref{le_laestructura!} exhibits, in a unified way, the algebraic structures involved in all $3$-Sasakian homogeneous manifolds. In particular, it clarifies our choice of the metric $g$. We should recall the explicit description of the metric tensor of the $3$-Sasakian homogeneous manifolds given by Bielawski in \cite[Theorem~4]{bialowski} as follows,
\begin{equation}\label{eq_metric}
g(X+U,X+U)=-\kappa(X,X)-\frac12\kappa(U,U),
\end{equation}
for all $X\in\mathfrak{sp}(1)$ and $U\in \g_{1}$. Needless to say, we have determined  the metric $g$ with more precision in Theorem~\ref{le_laestructura!}, 
since our metric $g$ is homothetic to the metric in \eqref{eq_metric}.

At this point, it is interesting to note that for $n=0$, we have $ \spf(1)=\mm$, that is, $\g_{1}=0$. For this particular case, the multiplication in $\mm$ corresponding to the Levi-Civita connection coincides with $\frac12[\ ,\ ]$ and so $M=G/H$ is naturally reductive for our reductive decomposition when $n=0$. However, the homogeneous spaces $M=G/H$ in Theorem~\ref{le_laestructura!} are not naturally reductive whenever $n>0$.
\end{remark}

\begin{remark}
\normalfont
Every 3-Sasakian manifold $M$ is Einstein with positive scalar curvature, \cite{kashiwada}. Remarkably, every 3-Sasakian manifold $M$, homogeneous or not, does always admit a second Einstein metric with positive scalar curvature \cite[Theorem~13.3.18]{galickiboyer}. That is, every 3-Sasakian manifold $M$ has at least two distinct homothety classes of Einstein metrics. This second metric, say $\tilde g$, was constructed by making a canonical variation of the original one along the fibers of the natural projection $M\to M/\mathcal{F}_{{Q}}$ on the leaf space. The Sasakian structures 
for the original metric are not Sasakian structures for $(M,\tilde g)$. For any $G$-homogeneous $3$-Sasakian manifold, the new metric $\tilde g$ is also $G$-invariant.
\end{remark}

\begin{remark}
{\normalfont Equation~\eqref{eq_imp2} implies that $\mm^\C$, as $\hh^\C$-module, is the sum of three trivial irreducible modules with two copies of certain module $W$. This will allow to apply Lemma~\ref{le_contando}\,c) to all the  3-Sasakian homogeneous manifolds  to compute the dimension   $\dim_\R\hom_{\mathfrak h}( \Lambda^3\mathfrak m,\R)=\dim_\C\hom_{\mathfrak h^\C}( \Lambda^3\mathfrak m^\C,\C)$. In spite of this unified treatment, we will make use of the concrete module $W$, which   will be computed next jointly with its irreducible summands. 
The $\SU$-case will be the last one to consider. It is the only case   in which the module $W$ is not irreducible, what causes the apparition of new tensors and hence, the existence of more invariant affine connections. To deal with these extra tensors and their covariant derivatives, the explicit expression given in (\ref{eq_alfadeLevi}) for the multiplication $\alpha^g$  on $\mm$ corresponding to the Levi-Civita connection will be
crucial.} 
\end{remark}

In the following subsections, the dimensions of the vector spaces in (\ref{eq_dimensionesqueridas})  will be computed for the homogeneous $3$-Sasakian manifolds listed in Theorem \ref{th_lasSashomogeneas}. These dimensions will provide us the cardinality of the set of  invariant affine connections in each case. 

\subsection{Case $\Sp(n+1)/\Sp(n)$}

Let $\HH$ be the algebra of quaternions with complex units $\{\j_1,\j_2,\j_3\}$. This means $\j_i^2=-1$ for all $1\leq i\leq 3$ and $\j_{i}\j_{j}=\epsilon_{ijk}\j_{k}$ for all permutation of the indices $(1,2,3)$ and $\epsilon_{ijk}$ the sign of the corresponding permutation.  For every $z=x_0+x_1\j_1+x_2\j_2+x_3\j_3\in\HH$, with $x_i\in\R$, $i=0,1,2,3$, we denote by $\overline{z}=x_0-x_1\j_1-x_2\j_2-x_3\j_3\in\HH$ its quaternionic conjugate  and by $\tr z=z+\bar z =2x_0$ its trace. Also, $\mathbb{H}_0$ denotes the set of zero trace quaternions,  that is, $\mathbb{H}_0=\mathbb{R}\j_1\oplus\mathbb{R}\mathbf{j}_2\oplus\mathbb{R}\j_3$. 
For any $n\geq 1$, let us consider the Euclidean metric $g$ on $\HH^{n}$ defined by 
$$
g(z,w)=\mathrm{Re}\Big(\sum_{k=1}^n z_k\overline{w_k} \Big),
$$ 
for $z=(z_{1},...,z_{n})^t, w=(w_{1},...,w_{n})^t\in \HH^n$.
The compact symplectic group $\Sp(n)$ is defined as  
$$
\Sp(n)=\{A\in \mathcal M _{n}(\mathbb H) :  A \overline{A}^{t}=I_{n} \}. 
$$
This Lie group acts on the left by matrix multiplication on $\HH^n$ (as column vectors) in such a way that the Euclidean metric $g$ is preserved.

Let us consider the $(4n+3)$-dimensional unit round sphere
$\esf^{4n+3} = \{ z\in \HH^{n+1} :g(z,z)=1 \}$, which inherits from $g$ the usual Riemannian metric of constant sectional curvature $+1$. 
The action of the Lie group $\Sp(n+1)$ on $\HH^{n+1}$ restricts to a transitive action on $\esfera$ and $\Sp(n+1)\subset  \mathrm{Iso}(\esfera,g)$.  
The isotropy group at the point
$o=(0,\dots,0,1)^t\in \esfera$ is 
\[H=\left\{\left(
\begin{array}{c|c}
  B&0    \\ \hline
  0& 1
\end{array}
\right): B\in \Sp(n)\right\}.\] 
Moreover, we have a reductive decomposition as in (\ref{eq_descomposicionreductiva}) given by
 \begin{equation}\label{eq_descreductivacasoH}
 \begin{array}{l}
 \mathfrak{g}=\mathfrak{sp}(n+1)=\{A\in \mathcal M _{n+1}(\mathbb H) :  A+\bar A^t=0 \},\\
 \mathfrak{h}=\left\{\left(
\begin{array}{c|c}
  B&0    \\ \hline
  0& 0
\end{array}
\right): B\in \mathfrak{sp}(n)\right\}\cong\mathfrak{sp}(n),\\
\mathfrak m=\left\{
\left(
\begin{array}{c|c}
  0&z    \\
  \hline
  -\bar z^t& a
\end{array}
\right) \in \mathcal M_{n+1}(\mathbb H) :   z\in \mathbb H^n,a\in\mathbb{H}_0
\right\}
\cong
\mathbb H^n\oplus \mathbb{H}_0=\g_{1}\oplus \spf(1).
\end{array}
\end{equation} 
By means of the identifications suggested by (\ref{eq_descreductivacasoH}), the action of $\mathfrak{h}$ on $\mathfrak{m}$ is the action of $\mathfrak{sp}(n)$
on $\mathbb H^n\oplus\mathbb{H}_0$ given by $B\cdot(z,a)=(Bz,0)$, for $B\in \mathfrak{sp}(n)$, $z\in\mathbb{H}^n$ and $a\in \mathbb{H}_0$. 
In particular, the action of $\hh$ on $\mathbb{H}_0$ is trivial.
 
As expected, the pair of Lie algebras $(\g, \hh)$ is a $3$-Sasakian data. Indeed,  we have the $\mathbb{Z}_{2}$-grading $\g =\g_{0} \oplus \g_{1}$ with   $\g_{0}=\mathbb{H}_0\oplus \hh$ and $\g_{1}=\mathbb{H}^{n}$. (Recall that $(\mathbb{H}_0,[\ ,\ ])\cong\spf(1)$ by means of the isomorphism $\j_i\mapsto\xi_i$ for $1\leq i \leq3$, with $\xi_i$ as in \eqref{eq_losxis}.) 
Then, Eq.~\eqref{eq_imp2} becomes clear from the next proof.
  
\begin{lemma}\label{base} Assume $\mathfrak{sp}(n+1)=\hh\oplus\mm$, $n\ge1$,  is the reductive decomposition   in \eqref{eq_descreductivacasoH}. Then 
\[\dim _{\mathbb R}\hom_{\mathfrak  h}(  \mathfrak m\otimes \mathfrak m,\mathfrak m ) =63,\, 
\dim_{\mathbb R}\hom_{\mathfrak  h}(\mathfrak m,\Lambda^2 \mathfrak m)=30,\, 
\dim_{\mathbb R}\hom_{\mathfrak  h}(\Lambda^3 \mathfrak m,\mathbb R)=10.\]
\end{lemma}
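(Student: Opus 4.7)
The plan is to reduce the claim to a direct application of Corollary~\ref{co_contar}, for which I must identify the $\hh^{\C}$-module structure of $\mm^{\C}$ and check that it has the form $2U\oplus 3\C$ with $U$ satisfying the required vanishing conditions. Throughout, I will use the standard fact that $\dim_{\R}\hom_{\hh}(V,W)=\dim_{\C}\hom_{\hh^{\C}}(V^{\C},W^{\C})$ for real $\hh$-modules $V,W$, so that all the counting can be carried out over $\C$.

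First I would compute $\mm^{\C}$ as an $\mathfrak{sp}(n,\C)$-module, using the decomposition $\mm\cong\HH^n\oplus\HH_0$ coming from \eqref{eq_descreductivacasoH}. The summand $\HH_0$ is the trivial three-dimensional real module, so $\HH_0^{\C}\cong 3\C$. For $\HH^n$, the natural left action of $\mathfrak{sp}(n)$ makes $\HH^n$ the underlying real module of the standard complex representation $U:=\C^{2n}$ of $\mathfrak{sp}(n,\C)$, which is of quaternionic type when restricted to the compact form. Consequently $(\HH^n)^{\C}\cong 2U$, and putting both pieces together, $\mm^{\C}\cong 2U\oplus 3\C$, which is precisely the situation of Corollary~\ref{co_contar} (it also confirms the $3$-Sasakian data identity \eqref{eq_imp2} in this case).

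Next I would verify the hypotheses on $U$ using the classical decompositions of tensor powers of the standard module of $\mathfrak{sp}(n,\C)$. Namely, $S^2U$ is irreducible with highest weight $2\lambda_1$ (in fact the adjoint representation), so $\hom_{\hh^{\C}}(S^2U,U)=0=\hom_{\hh^{\C}}(S^2U,\C)$. The symplectic form gives $\Lambda^2U\cong\C\oplus V(\lambda_2)$ (with $V(\lambda_2)=0$ when $n=1$), yielding $\hom_{\hh^{\C}}(\Lambda^2U,\C)=1$ and $\hom_{\hh^{\C}}(\Lambda^2U,U)=0$. For the last two hypotheses: $\Lambda^3U$ vanishes when $n=1$, equals $U$ when $n=2$ and equals $U\oplus V(\lambda_3)$ for $n\geq 3$, so it never contains a trivial summand; and $\hom_{\hh^{\C}}(\Lambda^2U\otimes U,\C)=\hom_{\hh^{\C}}(\Lambda^2U,U)=0$ by duality (since $U$ is self-dual). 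All hypotheses of Corollary~\ref{co_contar} are therefore met, and the three dimensions $63$, $30$, $10$ follow at once.

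The only mildly delicate step is the quaternionic-type argument establishing $(\HH^n)^{\C}\cong 2U$; I would justify it either by exhibiting an explicit $\hh$-invariant complex structure on $\HH^n$ (e.g.~left multiplication by $\j_1$) together with the $\hh$-invariant antilinear map $z\mapsto z\j_2$, which presents $\HH^n\otimes_{\R}\C$ as a direct sum of two copies of $\C^{2n}$, or simply by invoking the classification of real forms of irreducible complex representations of compact simple Lie algebras.
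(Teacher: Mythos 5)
Your proposal is correct and follows essentially the same route as the paper: identify $\mm^{\C}\cong 2V(\lambda_1)\oplus 3\C$ over $\hh^{\C}$ of type $C_n$, verify the hypotheses of Corollary~\ref{co_contar} via the standard decompositions $S^2V(\lambda_1)\cong V(2\lambda_1)$ and $\Lambda^2V(\lambda_1)\cong V(\lambda_2)\oplus\C$, and conclude. The only (welcome) variation is your duality argument $\hom_{\hh^{\C}}(\Lambda^2U\otimes U,\C)\cong\hom_{\hh^{\C}}(\Lambda^2U,U)=0$, which replaces the paper's explicit decomposition of $\Lambda^2U\otimes U$, and your more careful justification of $(\HH^n)^{\C}\cong 2U$, which the paper merely asserts.
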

\begin{proof} Put $Z_n=\tiny{\left(\begin{array}{cc}0&I_n\\-I_n&0\end{array}\right)}$. Then, the complex Lie algebra  
$$
\mathfrak h^{\mathbb C}=\hh\oplus\mathbf{i}\hh\cong  \mathfrak{sp}(2n,\mathbb C)=\left\{A\in \mathcal M _{2n}(\mathbb C) :  AZ_n+Z_n A^t=0 \right\}
$$ 
is simple of type $C_n$. Moreover, $\mathfrak m^{\mathbb C}$ is isomorphic to the module $2V(\lambda_1)\oplus 3\mathbb C$, where $\lambda_i$, for $i=1,\dots,n$, denote the \textit{fundamental weights} for  $C_n$ (notation as in \cite{Draper:Humphreysalg}).
In order to apply Corollary~\ref{co_contar} to the module $U:=V(\lambda_1) $, we compute
\begin{align*}
S^2V(\lambda_1)\cong V(2\lambda_1)\ \textrm{for all $n$, and \ }
&\Lambda^2V(\lambda_1)\cong\left\{
\begin{array}{ll}
V(\lambda_2)\oplus\C&  \textrm{ if $n\ge2$,  }\\
\C&  \textrm{ if $n=1$.  }\\
\end{array}\right.
\end{align*}
Recall that these computations can also be done by using the LiE online service:
\begin{center}
http://wwwmathlabo.univ-poitiers.fr/~maavl/LiE/form.html
\end{center}
It is well-known that, if $\lambda$ and $\beta$ are dominant weights, i.e., belonging to $\Lambda^+=\{\sum_{i=1}^n m_i\lambda_i:m_i\in\mathbb Z_{\ge0}\}$, the irreducible modules $V(\lambda)$ and $V(\beta)$ are isomorphic if, and only if, $\lambda=\beta$. This implies that $\mathfrak m^{\mathbb C}$ satisfies the technical conditions in Corollary~\ref{co_contar}, from which $\dim _{\mathbb R}\hom_{\mathfrak  h}(  \mathfrak m\otimes \mathfrak m,\mathfrak m ) =63$ and $\dim_{\mathbb R}\hom_{\mathfrak  h}(\mathfrak m,\Lambda^2 \mathfrak m)=30$. 
To compute the third dimension, we check
$$
\Lambda^3U\cong\left\{
\begin{array}{ll}
V(\lambda_3)\oplus U&  \textrm{ if $n\ge3$,  }\\
U &  \textrm{ if $n=2$,  }\\
0&  \textrm{ if $n=1$,  }\\
\end{array}\right.
\Lambda^2U\otimes U\cong\left\{
\begin{array}{ll}
V(\lambda_3)\oplus V(\lambda_1+\lambda_2)\oplus 2U&  \textrm{ if $n\ge3$,  }\\
V(\lambda_1+\lambda_2)\oplus 2U &  \textrm{ if $n=2$,  }\\
U&  \textrm{ if $n=1$.}\\
\end{array}\right.
$$ 
Taking into account that any of these modules does not contain any copy of the trivial module $\C$, we  can again apply Corollary~\ref{co_contar} to deduce  $\dim_{\mathbb R}\hom_{\mathfrak  h}(\Lambda^3 \mathfrak m,\mathbb R)=10.
$
\end{proof}

\begin{remark} \normalfont Note that this lemma was proved with different arguments in \cite{DraperPalomoPadge}, but after a long chain of computations which made use of 63 parameters. Now, our argument in \cite{DraperPalomoPadge} is replaced with a short proof which avoids the explicit algebraical descriptions of the connections provided by Nomizu's Theorem.
Moreover, there is a slip-up  in the survey \cite{DraperPalomoPadge} in the expression of the  map $\alpha^g$ related to the Levi-Civita connection, which should  coincide with that one in \eqref{eq_alfadeLevi}.
\end{remark}

\begin{remark}
{\rm 
We have not studied here the case $n=0$ because the 3-dimensional sphere is   well-known. First,  Laquer \cite{otroLaquer} studied invariant affine connections on Lie groups. And second, our work \cite[Section~7]{DraperGarvinPalomo} studies the metric connections with skew  torsion on $\mathbb S^3$ and also, those which are Einstein with skew  torsion. 
}
\end{remark}

\begin{remark}\label{remark_proyectivo}\rm{
To end this subsection, we would like to mention the projective case too. This is the only $3$-Sasakian homogeneous manifold which is not simply connected. Here again, the group $\Sp(n+1)$ acts transitively on $\R P^{4n+3}\cong\esf^{4n+3}/\mathbb Z_2=\{[z]: z\in\C^{2n+2},g(z,z)=1\}$ by $A [z]=[A z]$ for $A \in \Sp(n+1)$ and $[z]\in \R P^{4n+3}$. Now, the isotropy group is not connected. To be precise, the isotropy group at the point $[(0,\dots,0,1)^t]$ is 
\[H=\left\{\left(
\begin{array}{c|c}
  B&0    \\ \hline
  0& \pm1
\end{array}
\right): B\in \Sp(n)\right\},\] 
and the reductive decomposition $\g=\hh\oplus\mm$ is the same as the given one by \eqref{eq_descreductivacasoH}. Bearing in mind that the projection $\esf^{4n+3} \to \R P^{4n+3}$ commutes with the actions of the Lie group  $\Sp(n+1)$ on $\esf^{4n+3}$ and $\R P^{4n+3}$, respectively, every $\Sp(n+1)$-invariant affine connection on $\esf^{4n+3}$ induces, in a natural way, an $\Sp(n+1)$-invariant affine connection on $\R P^{4n+3}$. Therefore, the set of invariant affine connections on $\esf^{4n+3}$ is in one-to-one correspondence with the corresponding set on $\R P^{4n+3}$.

Alternatively, this conclusion can be achieved algebraically.
Take $\sigma=\tiny\left(
\begin{array}{c|c}
  I_n&0    \\ \hline
  0& -1
\end{array}
\right)\in H$, which does not belong to the connected component $H_0$. As $\Ad(\sigma)(z,a)=(z,-a)$, it is easy to check $\Ad(\sigma)(\alpha(X,Y))=\alpha\big(\Ad(\sigma)(X),\Ad(\sigma)(Y)\big)$
for all $X,Y\in\mathfrak{m}$  and  any $\hh$-invariant bilinear map $\alpha\colon\mm\times\mm\to\mm$.
}
\end{remark}

\subsection{Case $\frac{\SO(k )}{\SO(k-4)\times \Sp(1)}$ for $k\ge7$}\label{subcasoSO}

As mentioned, every $3$-Sasakian homogeneous manifold is the
total space of a  $\mathrm{Sp}(1)$ or a $\SO(3)$ principal bundle over a symmetric space. In this case, we have the projection
$$
p:\frac{\SO(k )}{\SO(k-4)\times \Sp(1)} \longrightarrow \frac{\SO(k )}{\SO(k-4)\times \SO(4)},
$$
and the base manifold is the real Grassmann manifold of oriented linear $4$-subspaces of $\R^k$. To be precise, let us recall the following double covering group homomorphism 
$$
\mathrm{Sp}(1)\times \mathrm{Sp}(1)\to \SO(4),\quad (q, z)\mapsto T_{(q,z)}\colon\R^4 \to \R^4\cong \mathbb{H},
$$
where $ T_{(q,z)}v=qv\bar{z}$ and $\mathrm{Sp}(1)=\{q\in \mathbb{H} :  q\bar q=1\}$. The map $T_{(q,z)}=L_{q}\circ R_{\bar{z}}$ is the composition of the left and right quaternionic multiplications
$L_{q}(v)=q v$ and $ R_{\bar{z}}(v)=v\bar{z}$.
Therefore, there are two remarkable ways of considering the group $\mathrm{Sp}(1)$ as a subgroup of $\SO(4)$. Namely, $\mathrm{Sp}(1)^{-}\subset \SO(4)$
given by $q\mapsto T_{(1,q)}$ and $\mathrm{Sp}(1)^{+}\subset \SO(4)$
given by $q\mapsto T_{(q,1)}$.
The above projection $p$ is defined, in a natural way, from $\mathrm{Sp}(1)^{-}\subset \SO(4)$.

\begin{remark}
This point of view gives a geometrical meaning for the $3$-Sasakian manifold $\frac{\SO(k )}{\SO(k-4)\times \Sp(1)}$. 
Recall that we  have the induced isomorphism $\mathrm{Sp}(1)\mathrm{Sp}(1)\to \SO(4)$, where as customary $\mathrm{Sp}(1)\mathrm{Sp}(1):=(\mathrm{Sp}(1)\times \mathrm{Sp}(1))/\mathbb{Z}_{2}$. Now, let us consider $\Pi\in \frac{\SO(k )}{\SO(k-4)\times \SO(4)}$, with $\Pi=\sigma(\SO(k-4)\times \SO(4))$ for $\sigma\in\SO(k )$. Then, the last four column vectors of $\sigma\in\SO(k )$ give an oriented orthonormal basis $\mathcal{B}(\sigma)$ of the 4-subspace $\Pi$ which identifies $\mathcal{B}(\sigma)\colon\R^4\cong \mathbb{H}\to\Pi$. Thus, the fibre over $\Pi$ can be described by
$
p^{-1}(\Pi)=\{\mathcal{B}(\sigma)\circ T_{[1,q]}: q\in \mathrm{Sp}(1)/\mathbb{Z}_{2}\cong \SO(3)\}.
$
Hence, $\frac{\SO(k )}{\SO(k-4)\times \Sp(1)}$ can be seen as a distinguished family of oriented orthonormal basis on every oriented linear $4$-subspace of $\R^k$.
\end{remark}

Now we introduce the $3$-Sasakian data corresponding to this case. The $\mathbb Z_2$-grading on $\g=\sof(k)=\{A\in \mathcal M _{k}(\mathbb R) :  A+ A^t=0\}$  is given by
\begin{align*}
 \g_{0} &=\left\{\left(\begin{array}{c|c}B&0\\\hline0&C
\end{array}\right): B\in\sof(k-4), C\in\sof(4)\right\}, \\
\g_{1}&=\left\{\left(\begin{array}{c|c}0&D\\\hline-D^t&0
\end{array}\right): D\in \mathcal M_{(k-4)\times 4}(\mathbb R)\right\}.
\end{align*}
Next,  $\sof(4)$ is not a simple Lie algebra but decomposes as a sum of two copies of $\spf(1)$. Namely, 
$\sof(4)=I^-\oplus I^+$ for
$$\scriptsize
I^-=\left\{\left(\begin{array}{cccc}
0&\alpha_1&\alpha_2&\alpha_3\\
-\alpha_1&0&-\alpha_3&\alpha_2\\
-\alpha_2&\alpha_3&0&-\alpha_1\\
-\alpha_3&-\alpha_2&\alpha_1&0\end{array}\right):\alpha_i\in\mathbb R \right\},\,\,
I^+=\left\{\left(\begin{array}{cccc}
0&-\alpha_1&-\alpha_2&-\alpha_3\\
\alpha_1&0&-\alpha_3&\alpha_2\\
\alpha_2&\alpha_3&0&-\alpha_1\\
\alpha_3&-\alpha_2&\alpha_1&0\end{array}\right):\alpha_i\in\mathbb R \right\},
$$ 
where these matrices are just the matrices of $R_{\bar q}=-R_q$ and $L_q$, respectively, if $q=\alpha_1\mathbf{j}_1+\alpha_2\mathbf{j}_2+\alpha_3\mathbf{j}_3\in\mathbb H_0\cong\spf(1)$. 
In other words, the above decomposition of $\sof(4)$ has been obtained from the inclusions $\mathrm{Sp}(1)^{-}\subset \SO(4)$ for $I^{-}$ and  $\mathrm{Sp}(1)^{+}\subset \SO(4)$ for $I^{+}$.
In particular, we have the reductive decomposition of $\g=\sof(k)$
given by $\g=\hh\oplus\mm$ for 
$$
\begin{array}{l}\vspace{6pt}
\hh=\left\{\left(\begin{array}{c|c}B&0\\\hline0&C
\end{array}\right): B\in\sof(n),\, C\in I^-\right\},
\\
\mm=\left\{\left(\begin{array}{c|c}0&D\\\hline-D^t&X
\end{array}\right): D\in \mathcal M _{n\times 4}(\mathbb R),\,X\in I^+\right\}\cong I^{+}\oplus \g_{{1}},
\end{array}
$$
with $n=k-4$, which  of course satisfies $\g_{0}= I^{+}\oplus \hh\cong \spf(1) \oplus \hh$.
We use the natural identifications $\hh\cong \sof(n)\oplus I^-$ ($\cong \sof(n)\oplus\spf(1)$)
and $\mm\cong \mathcal M _{n\times 4}(\mathbb R)\oplus I^+$ ($\cong\mathcal M_{n\times 4}(\mathbb R)\oplus\spf(1)$)
 given by
 $$
 \left(\begin{array}{c|c}B&0\\\hline0&C
\end{array}\right)\mapsto (B,C) ,\qquad
\left(\begin{array}{c|c}0&D\\\hline-D^t&X
\end{array}\right)\mapsto (D,X).
$$
Thus, taking into account that $[I^{-}, I^{+}]=0$ (as $R_qL_z=L_zR_q$), the adjoint action of the semisimple Lie algebra $\hh$ on $\mm$ can be expressed as $(B,C)\cdot(D,X)=(BD-DC,0).$  If there is no ambiguity, we use $B$ for $(B,0)$ and $C$ for $(0,C)$. In particular, $\mm$ decomposes as the sum of $I^+$, which is a trivial 3-dimensional $\hh$-module, and $\mathcal M_{n\times 4}(\mathbb R)$, which is an irreducible $\hh$-module.

In order to apply Corollary~\ref{co_contar}, we  have previously to check that $
\mm^\C\cong3\C\oplus 2\U$   for some irreducible and nontrivial $\hh^\C$-submodule $\U$. So, we need to know the decomposition of
 $\mathcal M_{n\times 4}(\mathbb R)^\C\cong\mathcal M_{n\times 4}(\mathbb C)$ as a sum of  $\hh^\C$-irreducible submodules. More precisely, we are going to prove that such decomposition is
 $$
\mathcal M_{n\times 4}(\mathbb C)=\U_1\oplus \U_2,
$$
for
$\U_1:=\{(\textbf{i} a|a|\textbf{i} b|b): a,b\in\C^n\}$
and
$\U_2:=\{(-\textbf{i} a|a|-\textbf{i} b|b): a,b\in\C^n\}$
(notation by columns). 
First, $\U_1\oplus\U_2=\mathcal M_{n\times 4}(\mathbb C)$ is clear since $\C(\textbf{i},1)\oplus\C(-\textbf{i},1)=\C\times\C$. Second, we also have $\hh^\C\U_1\subset\U_1$ (all works analogously  for $\U_2$): 
If $B\in\sof(n)^\C\equiv \sof(n,\C)$, then $B\cdot (\textbf{i} a|a|\textbf{i} b|b)=(\textbf{i} Ba|Ba|\textbf{i} Bb|Bb)\in\U_1$;  and, if $C\in I^-$, then $C\cdot(\textbf{i} a|a|\textbf{i} b|b)\equiv(0,C)((\textbf{i} a|a|\textbf{i} b|b),0)$ $=(-(\textbf{i} a|a|\textbf{i} b|b)C,0)\in\U_1$, since 
\begin{equation}\label{eq_accion}
\begin{array}{l}
\j_1\cdot(\textbf{i} a|a|\textbf{i} b|b)=(a|-\textbf{i} a|-b|\textbf{i} b),
\\\j_2\cdot(\textbf{i} a|a|\textbf{i} b|b) = (\textbf{i} b|b|-\textbf{i} a|-a),
\\\j_3\cdot(\textbf{i} a|a|\textbf{i} b|b) =(b|-\textbf{i} b|a|-\textbf{i} a).
\end{array}
\end{equation}
Third, 
to check the irreducibility of each $\U_i$, we observe that both $\U_i$ are isomorphic to the $\hh^\C$-module
$\U=\C^n\times\C^n=\{(a,b):a,b\in\C^n\}$ endowed with the action given by
\begin{itemize}
\item for $B\in\sof(n,\C)$, $B\cdot (a,b):=(Ba,Bb)$  (so $\U$ is sum of two copies of the natural $\sof(n,\C)$-irreducible representation $\C^n$, each \emph{column} is a copy);
\item the action of $(I^-)^\C\cong\slf(2,\C)=\textrm{Span}\left\{ 
H=\tiny{ \left(\begin{array}{cc}1&0\\0&-1
\end{array}\right)} ,
E=\tiny{ \left(\begin{array}{cc}0&1\\0&0
\end{array}\right)},
F=\tiny{ \left(\begin{array}{cc}0&0\\ -1&0
\end{array}\right) }
\right 
\}$ is by right multiplication:
\begin{equation}\label{eq_acciontraducida}
(a,b)H=
(a,-b),\quad (a,b)E=(0,a),\quad (a,b)F=(-b,0);
\end{equation}
(so $\U$ is sum of $n$ copies of the two-dimensional $\slf(2,\C)$-module $V(1)$:\footnote{
Recall that there is exactly one irreducible  $\slf(2,\C)$-module of each dimension $n+1$, which is frequently denoted  by $V(n)$. It coincides with $V(n)\cong S^n(V(\lambda_1))$. In particular $V(1)\cong V(\lambda_1)\cong\C^2$ is given by the columns.
} each \emph{row} of $(a,b)$ viewed as a matrix in $\mathcal M_{n\times 2}(\mathbb C)$  is such a copy).
\end{itemize}
It is enough to call $H=\textbf{i}\j_1$, $E=\frac12\left(-\j_2+\textbf{i}\j_3\right)$ and $F=\frac12\left(\j_2+\textbf{i}\j_3\right)$ to pass from \eqref{eq_accion} to  \eqref{eq_acciontraducida}, thus getting that $\U_1$ is isomorphic to $\U$. In order to clarify the irreducibility of $\U$, let us assume that $0\ne W\ne \U$ is an $\hh^\C$-submodule of $\U$. Then $W$ must be an  $\sof(n,\C)$-submodule isomorphic to $\C^n$, that is, either 
$W=\{ (a,\alpha a):a\in\C^n  \}$ for some fixed $\alpha\in\C$ or $W=\{ (0, a):a\in\C^n  \}$. But none of these are invariant for the action of $\slf(2,\C)$ given by \eqref{eq_acciontraducida}. This finishes the proof that
$\mm^\C\cong3\C\oplus 2\U$ is the decomposition of $\mm^\C$ as a sum of irreducible $\hh^\C$-submodules.

\begin{lemma}\label{lemacasoSO} If $\g=\hh\oplus\mm$ is the reductive decomposition related to $M=\frac{\SO(k)}{\SO(k-4)\times \Sp(1)}$ for $k\ge7$, then
\[ \dim_{\mathbb R}\hom_{\mathfrak  h}(  \mathfrak m\otimes \mathfrak m,\mathfrak m ) =63,  \
\dim_{\mathbb R}\hom_{\mathfrak  h}(\mathfrak m,\Lambda^2 \mathfrak m)=30,\,  
\dim_{\mathbb R}\hom_{\mathfrak  h}(\Lambda^3 \mathfrak m,\mathbb R)=10.\]
\end{lemma}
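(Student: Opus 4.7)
The plan is to apply Corollary~\ref{co_contar} directly, exactly as in the proof of Lemma~\ref{base} for the symplectic case. Just before the statement, it has been established that $\mm^\C \cong 3\C \oplus 2\U$ as $\hh^\C$-modules, where $\U$ is the irreducible outer tensor product $\C^n\boxtimes V(1)$ of the natural $\sof(n,\C)$-module and the $2$-dimensional natural $\slf(2,\C)$-module (with $(I^-)^\C\cong\slf(2,\C)$). Therefore the entire task reduces to verifying the technical hypotheses of Corollary~\ref{co_contar} for this concrete $\U$, after which all three dimensions $63$, $30$, $10$ follow mechanically.

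To verify the hypotheses, I would use the standard plethysm identities
\[
S^2(A\boxtimes B) \cong S^2A\boxtimes S^2B\,\oplus\,\Lambda^2 A\boxtimes \Lambda^2 B,\qquad
\Lambda^2(A\boxtimes B) \cong S^2 A\boxtimes\Lambda^2 B\,\oplus\,\Lambda^2 A\boxtimes S^2 B,
\]
with $A=\C^n$, $B=V(1)$. From $S^2V(1)\cong V(2)$, $\Lambda^2 V(1)\cong\C$, together with the well-known decompositions $S^2\C^n\cong V(2\lambda_1)\oplus\C$ (traceless symmetric plus trace) and $\Lambda^2\C^n\cong\sof(n,\C)$ (adjoint, hence containing no trivial $\sof(n,\C)$-submodule), one immediately reads off the isotypical decompositions of $S^2\U$ and $\Lambda^2\U$. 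Since a trivial $\hh^\C$-submodule must be trivial on both factors, inspecting the $\slf(2,\C)$-part of each summand gives $\dim\hom_{\hh^\C}(S^2\U,\C)=0$ and $\dim\hom_{\hh^\C}(\Lambda^2\U,\C)=1$ (the unique trivial summand coming from the trace piece of $S^2\C^n$ paired with $\Lambda^2V(1)$). Similarly, the $\slf(2,\C)$-parts occurring in $S^2\U$ and $\Lambda^2\U$ are only $V(2)$ and $\C$, whereas $\U$ has $\slf(2,\C)$-part $V(1)$, so $\hom_{\hh^\C}(S^2\U,\U)=\hom_{\hh^\C}(\Lambda^2\U,\U)=0$. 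For the last hypothesis, the analogous expansion
\[
\Lambda^3(A\boxtimes B)\cong \Lambda^3 A\boxtimes S^3 B\,\oplus\,\mathbb{S}_{(2,1)}A\boxtimes \mathbb{S}_{(2,1)}B\,\oplus\,S^3A\boxtimes\Lambda^3 B,
\]
combined with $\Lambda^3V(1)=0$, $S^3V(1)\cong V(3)$, $\mathbb{S}_{(2,1)}V(1)\cong V(1)$, shows that every summand of $\Lambda^3\U$ has a nontrivial $\slf(2,\C)$-factor and hence no trivial submodule; the same inspection (using $V(2)\otimes V(1)\cong V(3)\oplus V(1)$) disposes of $\Lambda^2\U\otimes\U$.

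The main (minor) obstacle is the low-rank behaviour at $n=3$ and $n=4$, where $\sof(n,\C)$ is not a simple algebra of classical type $B$ or $D$ in the usual sense: for $n=3$ one has $\sof(3,\C)\cong\slf(2,\C)$ with $\C^3\cong V(2)$, and for $n=4$ one has $\sof(4,\C)\cong\slf(2,\C)\oplus\slf(2,\C)$ with $\C^4\cong V(1)\boxtimes V(1)$. In both cases a direct check shows that the qualitative decompositions used above still hold verbatim, namely $S^2\C^n$ splits as a nontrivial irreducible summand plus one copy of $\C$, and $\Lambda^2\C^n$ is the adjoint of the semisimple algebra $\sof(n,\C)$ (hence has no trivial submodule). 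Consequently the hypotheses of Corollary~\ref{co_contar} hold uniformly for every $k\ge 7$, and the three advertised dimensions $63$, $30$, $10$ follow at once.
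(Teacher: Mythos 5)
Your proposal is correct and follows the paper's overall strategy: both arguments reduce the lemma to Corollary~\ref{co_contar} applied to $W=\mm^\C\cong 3\C\oplus 2\U$, with $\U$ the irreducible $\hh^\C=\sof(n,\C)\oplus\slf(2,\C)$-module $\C^n\boxtimes V(1)$ established just before the statement. Where you diverge is in how the technical hypotheses on $\U$ are verified. The paper works one factor at a time: it views $\U$ as $nV(1)$ over $\slf(2,\C)$ to get $\U\otimes\U\cong n^2V(2)\oplus n^2V(0)$ and hence $\hom_{\hh^\C}(\U\otimes\U,\U)=0$, and then explicitly parametrizes the four-dimensional space of $\sof(n,\C)$-invariant bilinear forms $\rho((a,b),(c,d))=s_1a^tc+s_2a^td+s_3b^tc+s_4b^td$, imposing $\slf(2,\C)$-invariance to isolate the unique (alternating) invariant form; for the $\Lambda^3$ condition it inspects the $\slf(2,\C)$-isotypes of $\Lambda^3\mm^\C$ directly. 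You instead invoke the Cauchy/plethysm decompositions of $S^2$, $\Lambda^2$ and $\Lambda^3$ of the outer tensor product, which packages the same information more systematically: every hypothesis is read off from the bigraded summands, and the $\Lambda^2\U\otimes\U$ and $\Lambda^3\U$ conditions come for free from the absence of the isotype $V(0)$ on the $\slf(2,\C)$ side. Both verifications are sound. One small streamlining of your low-rank discussion: the only facts you need about the $\sof(n,\C)$-factor are that $\C^n$ is irreducible and self-dual with a one-dimensional space of invariant bilinear forms, all of them symmetric (so $S^2\C^n$ has exactly one trivial summand and $\Lambda^2\C^n\cong\sof(n,\C)$ has none); this holds uniformly for all $n\ge3$ by Schur's lemma, so the separate checks at $n=3$ and $n=4$ are not really needed.
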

\begin{proof}
As mentioned, we can also  apply Corollary~\ref{co_contar} to the complex (semisimple) algebra $\mathfrak  h^\C=\sof(n,\C)\oplus\slf(2,\C)$ for $n=k-4\ge3$  and to the $\mathfrak  h^\C$-module $\mm^\C$.
First, we observe that  $\hom_{\mathfrak  h^\C}(  \U\otimes \U,\U ) =0$, since, as $\slf(2,\C)$-module, $\U$ is isomorphic to $nV(1)$ and hence
$$
\U\otimes\U\cong nV(1)\otimes nV(1)\cong n^2V(2)\oplus n^2V(0). 
$$
Thus, $\hom_{\mathfrak  h^\C}(  S^2 \U,\U ) =\hom_{\mathfrak  h^\C}(  \Lambda ^2 \U,\U ) =0$. Second, we are going to see that 
$\hom_{\mathfrak  h^\C}(  S^2\U,\C )$ $=0$ and that the (complex) dimension of $\hom_{\mathfrak  h^\C}(  \Lambda^2\U,\C ) $ is one. Indeed, $\U$ is isomorphic to $2\C^n$ as $\sof(n,\C)$-module, so that $ \U\otimes \U\cong4\C^n\otimes \C^n$ contains four copies of the trivial module (namely, $\Lambda ^2\C^n$ is the adjoint module and $S^2\C^n$ is sum of a trivial one-dimensional module and $V(2\lambda_1)$).   
In other words, there is a four-parametric family of $\sof(n,\C)$-invariant maps $ \U\times \U\to\C$ given by
$$
\rho((a,b),(c,d))=s_1a^tc+s_2a^td+s_3b^tc+s_4b^td, 
$$
for some scalars $s_i\in\C$. The map $\rho$ is also $\slf(2,\C)$-invariant if, and only if,  $s_1=s_2+s_3=s_4=0$. For instance, the equalities $ s_2+s_3=s_4=0$ are obtained directly from the fact that
$$
\begin{array}{ll}
0&=\rho((a,b)E,(c,d))+\rho((a,b),(c,d)E)=\rho((0,a),(c,d))+\rho((a,b),(0,c))\\
&=s_3a^tc+s_4a^td+s_2a^tc+s_4b^tc
\end{array}
$$
for all $a,b,c,d\in\C^n$. Also $s_1=0$ is achieved by changing $E$ with $F$ above. Therefore, we obtain the unique (up to scalar) $\hh^\C$-invariant map $ \U\times \U\to\C$   given by
$$
\rho((a,b),(c,d))= a^td-b^tc,
$$
which is alternating.  This gives the first two desired dimensions.

For the third   dimension, that one of $\hom_{\mathfrak  h^\C}(\Lambda^3 \mathfrak m^\C,\mathbb C)$,
observe that the module
  $$
  \Lambda^3\mm^\C\cong2\Lambda^3\U\oplus2(\Lambda^2\U\otimes\U)\oplus 9\Lambda^2\U\oplus 3S^2\U\oplus 6\U\oplus\C
  $$
only contains trivial submodules on the summand $\Lambda^2\U$ (one copy of $\C$ on each $\Lambda^2\U$), because, paying attention to the $\slf(2,\C)$-action, there are copies of $V(0)$ neither in $V(1)^{\otimes3}$ nor in 
$(V(2)\oplus V(0))\otimes V(1)$ nor in $S^2V(1)\cong V(2)$. Consequently, there are just $9+1$ copies of the trivial one-dimensional module inside $\Lambda^3\mm^\C$.
\end{proof}

\subsection{Exceptional Cases 
$\frac{G_2 }{\Sp(1) },\quad  
\frac{F_4 }{\Sp(3) },\quad
\frac{E_6 }{\SU(6) },\quad  
\frac{E_7 }{\mathrm{Spin}(12) },\quad 
\frac{ E_8}{ E_7}$.
}

Here we provide a model of the reductive decompositions and hence also of the $3$-Sasakian structures related to the exceptional Lie algebras.
Our approach will be based on the famous Tits' unified construction. Actually, it is not necessary for our purposes of computing dimensions, but we include it here for completeness and beauty.

Let $\CC$  be a (finite-dimensional) real division   algebra, that is, $\CC\in\{\R,\C,\HH,\OO\}$. Then $\CC$ is endowed with a nonsingular quadratic form $n\colon \CC\to \R$, usually called the \emph{norm}, such that $n(xy)=n(x)n(y)$. That is, $n$ is multiplicative, or $\CC$ is a \emph{composition} algebra. Each element $a\in \CC$ satisfies a quadratic equation (with real coefficients) 
$
a^2-t_\CC(a)a+n(a)1=0,
$
where $t_\CC(a)=n(a+1)-n(a)-1$ is called the \emph{trace}. Denote by $\CC_0=\{a\in \CC: t_\CC(a)=0\}$ the subspace of traceless elements. Note that $[a,b]=ab-ba\in \CC_0$ for any $a,b\in \CC$, since $t_\CC(ab)=t_\CC(ba)$.
The map $-\colon \CC\to \CC$ given by $\bar a=t_\CC(a)1-a$ is an involution such that  $n(a)=a\bar a$ and $t_\CC(a)1=a+\bar a$ hold. Furthermore,  for any $a,b\in \CC$, the endomorphism
$
D_{a,b}:=[l_a,l_b]+[l_a,r_b]+[r_a,r_b]$
is a derivation of $\CC$, where $l_a(b)=ab$ and $r_a(b)=ba$ denote the left and right multiplication operators. These are quite representative derivations, since $\der(\CC)=\textrm{Span}\{D_{a,b}:a,b\in\CC\}\equiv D_{\CC,\CC}$. Their main properties are summarized here:
\begin{equation}\label{eq_propDxy}
D_{a,b}=-D_{b,a},\quad D_{ab,c}+D_{bc,a}+D_{ca,b}=0,\quad [d,D_{a,b}]=D_{d(a),b}+D_{a,d(b)},
\end{equation}
for any $a,b,c\in\CC$ and $d\in\der(\CC)$.

Recall that a basis of the   octonion algebra $\OO$ is  $\{1,\textbf{i},\textbf{j},\textbf{k},\textbf{l},\textbf{il},\textbf{jl},\textbf{kl}\}$, 
where the product is given by  $q_1(q_2\textbf{l})=(q_2q_1)\textbf{l}$, $(q_1\textbf{l})(q_2\textbf{l})=-\bar q_2q_1$ and $(q_2\textbf{l})q_1=(q_2\bar q_1)\textbf{l}$ for any $q_i\in\HH=\textrm{Span}\{1,\textbf{i},\textbf{j},\textbf{k}\} $.
The norm is determined by $n(\HH,\textbf{l})=0$ and $n(\textbf{l})=1$, being $n\vert_\HH$   the usual norm of the quaternion algebra.

A commutative  algebra $J$ satisfying the Jordan identity
$(x^2y)x$ $=x^2(yx)$ is called a  \emph{Jordan algebra}. The Jordan algebras relevant   for our purposes are $\R$ and 
$\mathcal{H}_3(\CC)=\{x=(x_{ij}) \in \mathcal{M}_{3 }(\CC): \bar x^t\equiv (\overline{x_{ji}})=  x \}$, for $\CC$ one of the previous composition algebras, where the  product in $J=\mathcal{H}_3(\CC)$ is given by
$$
x\cdot y=\frac12(xy+yx),
$$
denoting here by juxtaposition the usual product of matrices. 
  (This product $\cdot$ is usually called the \emph{symmetrized} product.)
We have a decomposition $J=\R I_3\oplus J_0$, for $J_0=\{x\in J: \tr(x)=0\}$ the subspace of traceless matrices. (This can be extended to $J=\R$ by considering $J_0=0$.)
 We also have a commutative multiplication $*$  on $J_0$,  defined by $x*y:=x\cdot  y-\frac13\tr(x\cdot y)I_3\in J_0$.
Denote by $R_x\colon J\to J$, $y\mapsto y\cdot x$  the multiplication operator, and observe that
   $[R_x,R_y]\in\der(J)$
  for any $x,y\in J$.  

The beautiful unified Tits' construction of all the exceptional simple Lie algebras, \cite{Tits}, is reviewed here only for  compact real exceptional Lie algebras, although it is valid in a wider context. For $\CC$ and $\CC'$ two real division composition algebras and  the Jordan algebra given by  either  $J=\R$ or $J=\mathcal{H}_3(\CC')$, consider the vector space
\begin{equation}\label{eq_TitsModel}
\T(\CC,J)=\der\,(\CC)\oplus (\CC_0 \otimes J_0) \oplus \der\,(J),
\end{equation}
which is made into a (compact) Lie algebra   by defining the (bilinear and anticommutative) multiplication $[\ ,\ ]$ on $\T(\CC,J)$ 
which agrees with the ordinary commutator in $\der(\CC)$ and $\der(J)$ 
and is specified by:
\begin{equation}\label{eq_TitsProduct}
\begin{array}{c}
 {[}\der(\CC),   \der(J)]=0, \qquad  {[}d, a\otimes x]=d(a) \otimes x, \qquad  [D, a\otimes x]=a \otimes D(x), \\
  {[}a\otimes x, b\otimes y]= \frac13\tr(xy) D_{a,b}+[a,b]\otimes (x\ast y)+ 2t_\CC(ab)[R_x,R_y],
\end{array}
\end{equation}
for all $d\in \der(\CC)$, $D\in \der(J)$, $a,b\in \CC_0$ and $x,y\in J_0$.  (If $J=\R$, note that $\T(\CC,\R)=\der\,(\CC)$.) Then:
 
\begin{center}
 {\small
 \begin{tabular}{c|ccccc}
 $ \T(\CC,J)$ & $\R$ &$\mathcal{H}_3(\R)$& $\mathcal{H}_3(\C)$& $\mathcal{H}_3(\HH)$& $\mathcal{H}_3(\OO)$\\
\hline \vrule width 0pt height 9pt
 $\HH$& $\spf(1) $&$\spf(3)  $&$\suf(6)$&$\sof(12)$&${\ef_7}$\\
 $\OO$& $\mathfrak{g}_2$&${\mathfrak{f}_4}$&${ \ef_6}$&${ \ef_7}$&${ \ef_8}$\\
 \end{tabular}}
 \end{center}  
 
 
For each Jordan algebra $J^1=\R$, $J^2=\mathcal{H}_3(\R)$,  $J^3=\mathcal{H}_3(\C)$,  $J^4=\mathcal{H}_3(\HH)$,  $J^5=\mathcal{H}_3(\OO)$, we take the Lie algebras constructed by the above process:  $\mathfrak{g}^s=\T(\OO,J^s)$ and $ \mathfrak{h}^s=\T(\HH,J^s)$, so that $\mathfrak{h}^s$ can be trivially considered as a subalgebra of $\mathfrak{g}^s$. If $\kappa\colon \mathfrak{g}^s\times \mathfrak{g}^s\to\R$ denotes the  (negative definite) Killing form, we will take $ \mathfrak{m}^s$ the orthogonal complement to $ \mathfrak{h}^s$ with respect to $\kappa$. In order to describe  $ \mathfrak{m}^s$ explicitly, we focus first on the case $J^1=\R$ and $ \mathfrak{g}^1=\der(\OO)\cong \mathfrak{g}_2$.

The $\mathbb Z_2$-grading on $\OO=\OO_{0}\oplus\OO_{1}=\HH\oplus \HH\mathbf{l}$ induces a
$\mathbb Z_2$-grading on $\der(\OO)$ with
$$
 \begin{array}{l}
 (\mathfrak{g}^1)_{0}=\der(\OO)_{0}=\{d\in \der(\OO): d(\HH)\subset \HH, d(\HH\mathbf{l})\subset \HH\mathbf{l}\}, \\
  (\mathfrak{g}^1)_{1}=\der(\OO)_{1}=\{d\in \der(\OO): d(\HH)\subset \HH \mathbf{l}, d(\HH\mathbf{l})\subset \HH\}.
 \end{array}
$$
As $\OO$ is generated (as an algebra) by $\HH\mathbf{l}$, any derivation $d\in  (\mathfrak{g}^1)_{0}$ is determined by its restriction to $\HH\mathbf{l}$, so that 
$$
 (\mathfrak{g}^1)_{0}\longrightarrow \mathfrak{so}( \HH\mathbf{l},n)\cong\sof(4),\qquad d\mapsto d\vert_{\HH\mathbf{l}}
$$
is an isomorphism of Lie algebras and then $ (\mathfrak{g}^1)_{0}$ is isomorphic to two copies of $\spf(1)$ as in Section~\ref{subcasoSO}. To be precise,  we introduce the derivations 
$$
\begin{array}{rccc}
d^-_a\colon&\OO&\to&\OO \\
&q&\mapsto&[a,q]\\
&q\textbf{l}&\mapsto&(qa)\textbf{l},
\end{array}\quad  
\qquad \begin{array}{rccc}
d^+_a\colon&\OO&\to&\OO \\
&q&\mapsto&0\\
&q\textbf{l}&\mapsto&(aq)\textbf{l},
\end{array}
$$
for any $a\in \HH_0$ ($q\in\HH$), so that $I^\sigma=\{d^{\sigma}_a : a\in\HH_0\}$ is isomorphic to   $(\HH_0,[\ ,\ ])\cong\spf(1)$ for each $\sigma\in\{\pm\}$
and $(\mathfrak{g}^1)_{0}=I^-\oplus I^+$ is sum of two simple ideals. Consider the   subalgebra
$\mathfrak{h}^1=I^-$,   isomorphic to $\der(\HH)=\T(\HH,\R)$. We are in the situation of \eqref{eq_imp1}, so that we have $\mathfrak{m}^1=I^+\oplus (\mathfrak{g}^1)_{1}$. In order to check Eq.~\eqref{eq_imp2}, we have to dive a little bit in the module structures. Taking into account that $D_{\OO_{i},\OO_{j}}\subset \der(\OO)_{{i+j}}$ (subindices in $\mathbb Z_2$), then $(\mathfrak{g}^1)_{1}= D_{\HH_0,\HH \textbf{l}}$. As $I^+=\{d\in\der(\OO): d(\HH)=0\}$ acts on $(\mathfrak{g}^1)_{1}$ by
$[d^+_a,D_{b,q\textbf{l}}]=D_{b,(aq)\textbf{l}}$, this tells that   $(\mathfrak{g}^1)_{1}=D_{\textbf{i} ,\HH \textbf{l}}\oplus D_{\textbf{j} ,\HH \textbf{l}}$ is sum of two (irreducible) $I^+$-modules, each one working as the $\HH_0$-module $\HH$ under the left multiplication. 
(For these arguments, we have used \eqref{eq_propDxy}.)
Hence, the complexification 
$(\mathfrak{g}^1)_{1}^\C$ breaks as 4 copies of the $(I^+)^\C\cong\slf(2,\C)$-module $V(1)$. Even more,
  it is not difficult to prove (\cite{esphomogeneosdeG2} for more details) that $(\mathfrak{g}^1)_{1}$ is an absolutely irreducible $(\mathfrak{g}^1)_{0}$-module whose complexification becomes
  $$
  (\mathfrak{g}^1)_{1}^\C\cong V(1) \otimes V(3), 
  $$
the tensor product of  the ${(I^{+})^\C}$-module of type $V(1)$ with the  ${(I^{-})^\C}$-module of type $V(3)$.
 In particular  Eq.~\eqref{eq_imp2}  is satisfied so that Theorem~\ref{le_laestructura!} tells   that $G_2/\Sp(1)$ is a 3-Sasakian manifold.
 Another consequence is that $(\mm^1)^\C\cong 3\C\oplus2 V(3)$ is the decomposition as a sum of $ (\mathfrak{h}^1)^\C$-irreducible submodules, and Corollary~\ref{co_contar} can be applied.
 
 All the remaining reductive decompositions for the exceptional cases can be obtained from the above case (note $\mathfrak{h}^r\subset\mathfrak{h}^s$ and $\mathfrak{g}^r\subset\mathfrak{g}^s$ if $r<s$), namely,
 $$
 \begin{array}{l}
 \mathfrak{h}^s=I^-\oplus \der(J^s) \oplus  \HH_0\otimes J^s_0,  \\
 \mathfrak{m}^s=I^+\oplus D_{\HH_0,\HH \textbf{l}}\oplus \HH \textbf{l}\otimes J^s_0.
 \end{array}
 $$
 Indeed, the next (symmetric) decomposition easily provides a $\mathbb Z_2$-grading on 
 $\mathfrak{g}^s=\T(\OO,J^s)$:
 $$
 \begin{array}{l}
 (\mathfrak{g}^s)_{0}=  \der(\OO)_{0} \oplus  (\OO_0)_{0}\otimes J^s_0\oplus \der(J^s)=I^+\oplus  \mathfrak{h}^s,\\
  (\mathfrak{g}^s)_{1}=  \der(\OO)_{1} \oplus  (\OO_0)_{1}\otimes J^s_0,
 \end{array}
 $$ in such a way that $[I^+, \mathfrak{h}^s]=0$, so that  \eqref{eq_imp1} holds. It is well-known that $ (\mathfrak{g}^s)_{1}$ is a $ (\mathfrak{g}^s)_{0}$-irreducible module (see \cite[Chapter~8]{Kac}), moreover, absolutely irreducible. To be precise, $(\mathfrak{g}^s)_{0}^\C\cong \slf(2,\C)\oplus (\mathfrak{h}^s)^\C$, 
where the Lie algebra $(\mathfrak{h}^s)^\C$ is isomorphic to
$$
\slf(2,\C)\, (A_1),\quad \spf(6,\C)\,  (C_1),\quad \slf(6,\C)\,  (A_5),\quad \sof(12,\C)\,  (D_6),\quad \mathfrak{e}_7^\C \, (E_7),
$$
if $s=1,2,3,4,5$ respectively; and $(\mathfrak{g}^s)_{1}^\C$ is isomorphic to the tensor product of  the natural $\slf(2,\C)$-module $\C^2\equiv V(1)$ with certain irreducible $(\mathfrak{h}^s)^\C$-module $W_s$ which can be identified  \cite[Eq.~2.23]{Alb} with the vector space 
$$
W_s=\left\{\begin{pmatrix}\alpha&x\\y&\beta\end{pmatrix}: \alpha,\beta\in\C, x,y\in (J^s)^\C\right\},
$$ 
of dimension $4$,  $14$, $20$, $32$ and $56$     respectively.  In terms of dominant weights,
$$
(\mathfrak{g}^s)_{1}^\C \cong\left\{\begin{array}{ll}
    \C^2\otimes V(3)&\text{if }s=1,\\
   \C^2\otimes V(\lambda_3)&\text{if }s=2,\\
   \C^2\otimes V(\lambda_3)&\text{if }s=3,\\
   \C^2\otimes V(\lambda_5)&\text{if }s=4,\\ 
    \C^2\otimes V(\lambda_7) &\text{if }s=5.
\end{array}\right.
$$  
This gives   Eq.~\eqref{eq_imp2}, so that Theorem~\ref{le_laestructura!} gives the $3$-Sasakian structure, where now  $\xi_i=d_{ \textbf{j}_i}^+\in\der(\OO)\cap\mm^s$ if $i=1,2,3$ (for all $s=1,\dots,5$). In particular, $ (\mathfrak{m}^s)^\C\cong3\C\oplus 2W_s$ is a $(\mathfrak{h}^s)^\C$-module isomorphism, which is the condition to apply Corollary~\ref{co_contar}. 

Two comments are in order. First, there is an abuse of notation, because  $\lambda_i$ is  used simultaneously for the fundamental weight relative to different Lie algebras. We think that this is clear from the context. For $s=4$, more relevant is the fact that there are  two valid decompositions, the other one being  $(\mathfrak{m}^4)^\C\cong 3\C\oplus 2V(\lambda_6)$. Note that  $V(\lambda_5)$ and $V(\lambda_6)$ are not isomorphic $D_6$-modules, but dual,  while the own adjoint module $D_6$ is self-dual. This explains why the existence of one reductive decomposition implies the other one.

With all this information, it is quite easy to compute the desired dimensions.

\begin{lemma} 
If $\g^s=\hh^s\oplus\mm^s$ is the reductive decomposition related to the homogeneous manifold $   {G_2 }/{\Sp(1) }$,   
${F_4 }/{\Sp(3) }$,  ${E_6 }/{\SU(6) }$, 
${E_7 }/{\mathrm{Spin}(12) }$ and 
${ E_8}/{ E_7}$, respectively, for $s=1,\dots, 5$, then 
$\mathrm{dim}_{\mathbb R}\hom_{\mathfrak  h^s}(  \mathfrak m^s\otimes \mathfrak m^s,\mathfrak m^s ) =63$,
$\mathrm{dim}_{\mathbb R}\hom_{\mathfrak  h^s}(\mathfrak m^s,\Lambda^2 \mathfrak m^s)$ $=30$, and   
$\mathrm{dim}_{\mathbb R}\hom_{\mathfrak  h^s}(\Lambda^3 \mathfrak m^s,\mathbb R)$ $=10.$
\end{lemma}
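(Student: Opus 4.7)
My plan is to invoke Corollary~\ref{co_contar} with $L=(\hh^s)^\C$ and $U=W_s$, since the excerpt already establishes that $(\mm^s)^\C\cong 3\C\oplus 2W_s$ with $W_s$ an irreducible nontrivial $(\hh^s)^\C$-module for every $s=1,\dots,5$. Once the hypotheses of that corollary are verified, the three numbers $63$, $30$, $10$ fall out automatically; the real task is therefore a case-by-case check on the concrete modules $W_1,\dots,W_5$ of dimensions $4,14,20,32,56$ listed in the setup.

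I would first dispose of the conditions with $\C$ in the target. The bracket $[\ ,\ ]_{\spf(1)}\colon\g_1\times\g_1\to\spf(1)$ coming from the reductive decomposition, combined with the $\slf(2,\C)$-equivariant splitting $\C^2\otimes\C^2\cong S^2\C^2\oplus\Lambda^2\C^2$, supplies a nonzero $(\hh^s)^\C$-invariant skew bilinear form on $W_s$. Since $W_s$ is irreducible, invariant bilinear forms on it span a space of dimension at most one, so the existence of a skew one yields at once $\dim\hom(\Lambda^2W_s,\C)=1$ and $\dim\hom(S^2W_s,\C)=0$.

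Next I would verify that $\hom(W_s\otimes W_s,W_s)=0$, that is, that $W_s$ is not a summand of $S^2W_s\oplus\Lambda^2W_s$, and that neither $\Lambda^3W_s$ nor $\Lambda^2W_s\otimes W_s$ contains a trivial summand. For $s=1$ this was already carried out in \cite{esphomogeneosdeG2}: the decomposition $V(3)\otimes V(3)\cong V(6)\oplus V(4)\oplus V(2)\oplus V(0)$, together with $\Lambda^3V(3)\cong V(3)$ and $\Lambda^2V(3)\otimes V(3)\cong V(7)\oplus V(5)\oplus 2V(3)$, gives all required statements immediately. For $s\ge 2$, $W_s$ is the standard module of a Freudenthal triple system attached to $\spf(6,\C)$, $\slf(6,\C)$, $\sof(12,\C)$ and $\mathfrak{e}_7^\C$, respectively; the tensor decompositions of $S^2W_s$, $\Lambda^2W_s$, $\Lambda^3W_s$ and $\Lambda^2W_s\otimes W_s$ are classical and can be read off directly, most efficiently via LiE.

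The main obstacle I expect is the largest case $s=5$, where $W_5$ is the $56$-dimensional $E_7$-module: here $\Lambda^2W_5$ already contains the adjoint $\mathfrak{e}_7^\C$ and $\Lambda^3W_5$ has dimension $27720$, so the concrete multiplicity count is heavy. A conceptual safeguard I would use is that the only low-degree equivariant tensor on $W_s$ besides the invariant symplectic form is the cubic Freudenthal form $S^3W_s\to\C$, which lives in symmetric degree and hence produces no invariant inside $\Lambda^3W_s$ nor inside $\Lambda^2W_s\otimes W_s$. Once all five cases are settled, Corollary~\ref{co_contar} delivers the three stated dimensions with no further work.
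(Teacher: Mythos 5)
Your overall strategy coincides with the paper's: both reduce the lemma to Corollary~\ref{co_contar} applied to $U=W_s$ and then settle its hypotheses by decomposing $S^2U$, $\Lambda^2U$, $\Lambda^3U$ and $\Lambda^2U\otimes U$ into irreducibles (by hand for $s=1$, via LiE for the larger cases). Where you genuinely diverge is in the treatment of the bilinear invariants: the paper reads $\dim\hom_{\hh^\C}(\Lambda^2U,\C)=1$ and $\hom_{\hh^\C}(S^2U,\C)=0$ off the explicit decompositions, whereas you derive them a priori from the $\spf(1)$-component of the bracket on $\g_1$ together with Schur's lemma. That argument is sound: the projection of $[\,\cdot\,,\cdot\,]\colon\Lambda^2\g_1\to\g_0$ onto $\spf(1)$ is nonzero (otherwise $\hh\oplus\g_1$ would be a proper ideal of the simple algebra $\g$), it factors through the summand $S^2\C^2\otimes\Lambda^2W_s$ of $\Lambda^2(\C^2\otimes W_s)$, hence yields a nonzero invariant alternating form on the irreducible module $W_s$, and Schur's lemma then gives both conditions at once. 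This is uniform in $s$, in the spirit of the paper's Remark~\ref{re_sts}, and is a modest but genuine improvement over the case-by-case listing.

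Two corrections, however. Your ``conceptual safeguard'' rests on a false premise: the Freudenthal invariant on $W_s$ is quartic, not cubic. Worse, a nonzero cubic invariant $S^3W_s\to\C$, if it existed, would not be harmless: via the symplectic self-duality $W_s\cong W_s^*$ one has $\hom(S^3W_s,\C)\hookrightarrow\hom(S^2W_s\otimes W_s,\C)\cong\hom(S^2W_s,W_s)$, so it would violate the hypothesis $\hom_L(S^2U,U)=0$ of Corollary~\ref{co_contar} and ruin the count $63$. That remark must be dropped; the explicit multiplicity computations have to carry the full weight of the cases $s\ge2$, exactly as in the paper. Separately, a small slip for $s=1$: $\Lambda^2V(3)\otimes V(3)\cong V(7)\oplus V(5)\oplus 2V(3)\oplus V(1)$ (you omitted the $V(1)$; the conclusion that there is no trivial summand is unaffected).
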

\begin{proof} Once again, we apply Corollary~\ref{co_contar} for $U$ the $(\mathfrak{h}^s)^\C$-irreducible module of type $V(3)$, $V(\lambda_3)$, $V(\lambda_3)$, $V(\lambda_5)$ and $V(\lambda_7)$ respectively (if $s=1,\dots,5$). Indeed,
\begin{enumerate}[\hspace{0.3truecm}($s=\,\,$1)]
\item $S^2U\cong V(6)\oplus V(2)\ $ and $\  \Lambda^2U\cong V(4)\oplus \C$;
\item $S^2U\cong V(2\lambda_1)\oplus V(2\lambda_3)\ $ and $\  \Lambda^2U\cong V(2\lambda_2)\oplus \C$;
\item $S^2U\cong V(\lambda_1+\lambda_5)\oplus V(2\lambda_3)\ $ and $\  \Lambda^2U\cong V(\lambda_2+\lambda_4)\oplus \C$;
\item $S^2U\cong V(2\lambda_5)\oplus V(\lambda_2)\ $ and $\  \Lambda^2U\cong V(\lambda_4)\oplus \C$; 
\item $S^2U\cong V(2\lambda_7)\oplus V(\lambda_1)\ $ and $\  \Lambda^2U\cong V(\lambda_6)\oplus \C$.
\end{enumerate}
Then, $\hom_{\mathfrak  h^\C}(   S^2U, U ) =\,\hom_{\mathfrak  h^\C}(   \Lambda^2U, U ) =\,\hom_{\mathfrak  h^\C}(  S^2 U,\C ) =0$, and the complex dimension of $\hom_{\mathfrak  h^\C}(  \Lambda^2U,\C )$ is one. Now, by Corollary~\ref{co_contar}, we conclude that $\mathrm{dim}_{\mathbb R}\hom_{\mathfrak  h^s}(  \mathfrak m^s\otimes \mathfrak m^s,\mathfrak m^s ) =63$ and $\mathrm{dim}_{\mathbb R}\hom_{\mathfrak  h^s}(\mathfrak m^s,\Lambda^2 \mathfrak m^s)=30$. The third searched dimension is equal to $10$, as a consequence of  the fact that neither $\Lambda^3U$ nor $\Lambda^2U\otimes U$ contains any trivial submodule:
\begin{enumerate}[\hspace{0.3truecm}($s=\,\,$1)]
\item $\Lambda^2U\otimes U\cong V(7)\oplus V(5)\oplus 2V(3)\oplus V(1)$ and $\Lambda^3U\cong V(3)$;
\item $\Lambda^2U\otimes U\cong V(2\lambda_2+\lambda_3) \oplus V(\lambda_1+2\lambda_2)\oplus V(2\lambda_1+\lambda_3)\oplus V(\lambda_1+\lambda_2)\oplus 2V(\lambda_3)$;
$\Lambda^3U\cong V(\lambda_1+2\lambda_2)\oplus V(\lambda_3)$;
\end{enumerate}
and so on.
\end{proof}

\subsection{Case $M=\frac{\SU(m)}{S(\mathrm U(m-2)\times \mathrm U(1))}$ with $m\ge3$.}\label{su}

This family of homogeneous manifolds has attached  the following reductive decomposition, for $n=m-2\ge1$:  
$$
\mathfrak g= \suf(m)=\{A\in \mathcal M _{m}(\mathbb C) :  A+\bar A^t=0 ,\, \text{tr}(A)=0\},$$
the subalgebra   (with matrices written by blocks $1+n+1$)
$$
\mathfrak h =\left\{ \left( \begin{array}{ccc}\frac{-\tr (B)}2&0&0\\0&B&0\\0&0&\frac{-\tr (B)}2\end{array}\right):  B\in\mathfrak{u}(n)\right\} 
$$
and the complementary subspace
$$
\mathfrak m=\left\{ \left( \begin{array}{ccc}  \alpha \textbf{i}&z_2^t&w\\-\bar z_2&0&z_1\\-\bar w&-\bar z_1^t&- \alpha \textbf{i}\end{array}\right): z_1,z_2\in\C^{n},w\in\C,\alpha\in\R\right\}.
$$
Observe that, in this case, $\hh\cong \mathfrak{u}(n)$ is not semisimple. In fact, we have the decomposition $\hh=Z(\hh)\oplus[\hh,\hh]$ with a one-dimensional center $Z(\hh)=\R \textbf{i}I_n$ and $[\hh,\hh]\cong\mathfrak{su}(n)$, which  is  simple if $n\ne 1$ while is $0$ if $n=1$. Thus, if $n=1$, the algebra $\hh$ is one-dimensional and hence abelian. Also, let us note that $\kappa (\hh, \mathfrak m)=0$ for  $\kappa$ the Killing form of $\mathfrak g$, and   $\dim_\R\mathfrak m=4n+3$.


In order to understand $\mm$ as $\hh$-module, we use the above suggested identifications 
$\hh\cong \mathfrak{u}(n)$ and $\mm\cong\C^n\oplus\C^n\oplus\suf(2)$ given by
\begin{equation}\label{eq_identificaciones}
\left( \begin{array}{ccc}\frac{-\tr (B)}2&0&0\\0&B&0\\0&0&\frac{-\tr (B)}2\end{array}\right) \mapsto B,\qquad
\left(\begin{array}{ccc}  \alpha \textbf{i}&z_2^t&w\\-\bar z_2&0&z_1\\-\bar w&-\bar z_1^t&- \alpha \textbf{i}\end{array}\right)\mapsto
\left(z_1,z_2\right)+\left(\begin{array}{cc} \alpha \textbf{i}&w\\-\bar w&-\alpha \textbf{i}\end{array}\right).
\end{equation}
Thus, the action of $\hh$ on $\mm$ is translated from the bracket $[\mathfrak h,\mathfrak m]\subset \mathfrak m$ and can be expressed in these terms as
\begin{equation}\label{eq_acciondeh}
B\cdot\left(\left(z_1,z_2\right)+\left(\begin{array}{cc} \alpha \textbf{i}&w\\-\bar w&-\alpha \textbf{i}\end{array}\right)\right)=\left(\left( B+\frac{\tr (B)}2 I_n\right)z_1,\left(\bar B-\frac{\tr (B)}2 I_n\right)z_2\right),
\end{equation}
for $I_n$  the identity matrix. 
(In particular, $\textbf{i}I_n\cdot(z_1,z_2)=\left(1+\frac n2\right)\textbf{i}(z_1,-z_2)$.)
In other words, $\mathfrak m$ can be decomposed  as $\mathfrak m=\mathfrak m_1 \oplus \mathfrak m_2 \oplus \mathfrak m_3$, the sum of the following  $\hh$-submodules:
\begin{itemize}
\item $\mathfrak m_1\equiv\{(z_1,0):z_1\in\C^n\}$, which is, as
$[\hh,\hh]\cong\mathfrak{su}(n)$-module, isomorphic to  the natural module $\C^n$ (hence irreducible),
while the center $Z(\hh)$ acts scalarly by 
$(\textbf{i} I_n) z_{1}=\left(1+\frac{n}2\right) \textbf{i} z_{1}$;

\item $\mathfrak m_2\equiv\{(0,z_2):z_2\in\C^n\}$, which is, as
$\mathfrak{su}(n)$-module, irreducible and isomorphic to $(\C^n)^*$,  the dual of the natural module,
while the center $Z(\hh)$ acts scalarly by $(\textbf{i} I_n) z_{2}=-\left(1+\frac{n}2\right) \textbf{i} z_{2}$;

\item $\mathfrak m_3\equiv\{X:X\in\suf(2)\}$, which is a trivial 3-dimensional $\mathfrak h$-module.
\end{itemize}
 
  Note that, if $n=1$, the decomposition $\mathfrak m=\mathfrak m_1 \oplus \mathfrak m_2 \oplus \mathfrak m_3$ still works, but now these pieces are not irreducible. To deal with this case, we simply forget the $[\hh,\hh]$-action, taking only into account that $(\textbf{i} I_1) z_{1}= \frac{3}2  \textbf{i} z_{1}$ and $(\textbf{i} I_1) z_{2}= -\frac{3}2  \textbf{i} z_{2}$.

We can do a initial comparison with our previous cases of reductive decompositions related to  3-Sasakian homogeneous manifolds. First, as  mentioned, $\hh$ is not semisimple now. Second, 
we   have the  $\mathbb Z_2$-grading required for a 3-Sasakian data
$\g_{0}=\hh\oplus\mm_3$ and $\g_{1}=\mm_1\oplus\mm_2$,
 since $\mm_3\cong\suf(2)\cong\spf(1)$ and $[\mm_3,\hh]=0$. 
 But  in this case $\g_{1}$ is not $\hh$-irreducible, since $\mm_1$ and  $\mm_2$ are proper $\hh$-submodules. This makes more delicate to prove
 Eq.~\eqref{eq_imp2}, as well as our computation of dimensions.

Thus, let us begin  to study how is $\mm^\C$ as $\hh^\C$-module. First we address the issue $n\ne1$.  
Denote by $\V$ the $\mathfrak{sl}(n,\C)$-natural module $\C^n$ (that is, the action is given by column multiplication). Let $\V_+$ and $\V^*_-$ be the $\mathfrak{gl}(n,\C)=(\mathfrak{sl}(n,\C)\oplus\C I_n)$-modules  which are   $\V$ and $\V^*$ as $\mathfrak{sl}(n,\C)$-modules, and where the action of $  I_n$ is $(1+\frac n2)\id$ and $-(1+\frac n2)\id$, respectively.
Our purpose is to prove that 
\begin{equation}\label{eq_complexifcasoSU}
\mm^\C\cong 3\C\oplus\,2 \V_+\oplus2\V^*_-
\end{equation}
is the decomposition of $\mm^\C$ as a sum of irreducible $\hh^\C$-submodules, so that we can apply Lemma~\ref{le_contando} again. More precisely, we prove $\mathfrak m_1^\C\cong\mathfrak m_2^\C\cong \V_+\oplus \V^*_-$, in spite that $\mathfrak m_1 \not\cong\mathfrak m_2$.

Recall that $\hh^\C=\hh\otimes_\R\C\cong\mathfrak{gl}(n, \C)
=\mathfrak{u}(n)\oplus\mathfrak{u}(n)\mathbf{i}$, since any $A\in\mathfrak{gl}(n, \C)$ can be written as  
$$
A=A_0+A_1\mathbf{i}\quad \text{ for }\quad A_0=\frac12(A-\bar A^t),\ A_1=-\frac{\mathbf{i}}{2}(A+\bar A^t)\in\mathfrak{u}(n).
$$
Besides, if $A\in\mathfrak{sl}(n, \C)$, then $A_0,A_1\in\mathfrak{su}(n)$.
 The  action of $\mathfrak{gl}(n, \C)$ on $(\C^n)^\C$ obtained as a complexification of an action $\diamond$ of $\mathfrak{su}(n)$ on $\C^n$
 is defined by
$$
A \diamond (z\otimes 1+w\otimes \mathbf{i})=(A_0  \diamond z-A_1\diamond  w)\otimes 1+(A_0 \diamond w+A_1\diamond  z)\otimes  \mathbf{i},
$$
for
 any $A\in\hh^\C$ (viewed as  $A_0\otimes 1+A_1\otimes  \mathbf{i}$), $z,w\in\C^n$. In particular, $I_n \diamond(z\otimes 1+w\otimes \mathbf{i})= (\mathbf{i}I_n \diamond w)\otimes 1-(\mathbf{i}I_n \diamond z)\otimes \mathbf{i}$.  This gives, jointly with Eq.~\eqref{eq_acciondeh},
\begin{equation}\label{eq_enmedio}
\begin{array}{l}
A\cdot (z_1\otimes 1-\mathbf{i}z_1\otimes \mathbf{i},0)= (Az_1\otimes 1-\mathbf{i}(Az_1)\otimes \mathbf{i},0), \\
A\cdot (z_1\otimes 1+\mathbf{i}z_1\otimes \mathbf{i},0)= (-\bar A^tz_1\otimes 1-\mathbf{i}(\bar A^tz_1)\otimes \mathbf{i},0) , \\
A\cdot (0,z_2\otimes 1-\mathbf{i}z_2\otimes \mathbf{i})= (0,-A^tz_2\otimes 1+\mathbf{i}(A^tz_2)\otimes \mathbf{i}),   \\
A\cdot (0,z_2\otimes 1+\mathbf{i}z_2\otimes \mathbf{i})=(0,\bar A z_2\otimes 1+\mathbf{i}( \bar Az_2)\otimes \mathbf{i}) ,
\end{array}
\end{equation}
for any $A\in\mathfrak{sl}(n, \C)$, $z_1,z_2\in\C^n$, while $  I_n $ acts scalarly with eigenvalue $\lambda:=1+\frac n2$ 
on 
\begin{equation}\label{eq_enmedio2}\{(z_1\otimes 1-\mathbf{i}z_1\otimes \mathbf{i},0):z_1\in\C^n\}\oplus \{(0,z_2\otimes 1+\mathbf{i}z_2\otimes \mathbf{i}):z_2\in\C^n\}  
\end{equation}
and with eigenvalue $-(1+\frac n2)$
on 
\begin{equation}\label{eq_enmedio3}
\{(z_1\otimes 1+\mathbf{i}z_1\otimes \mathbf{i},0):z_1\in\C^n\}\oplus \{(0,z_2\otimes 1-\mathbf{i}z_2\otimes \mathbf{i}):z_2\in\C^n\} . 
\end{equation}
Let us denote by $H\colon\C^n\times\C^n\to\C$ the usual Hermitian product given by $H(u,v)=u^t\bar v$, and by $\langle\ ,\ \rangle\colon\C^n\times\C^n\to\C$    the usual scalar product given by $\langle u,v\rangle=u^t  v$. Now, it is a direct consequence from (\ref{eq_enmedio}) that the following maps 
$$
 \begin{array}{rcccl}
\mm_1^\C&\cong&\C^n&\oplus&(\C^n)^*\\
(z_1\otimes 1-\mathbf{i}z_1\otimes \mathbf{i},0)&\mapsto& z_1&& \\
(z_1\otimes 1+\mathbf{i}z_1\otimes \mathbf{i},0)&\mapsto&&& H(-,z_1), 
\end{array} 
$$
and
$$
 \begin{array}{rcccl}
\mm_2^\C&\cong&(\C^n)^*&\oplus&\C^n\\
(0,z_2\otimes 1-\mathbf{i}z_2\otimes \mathbf{i})&\mapsto& \langle z_2,-\rangle&& \\
(0,z_2\otimes 1+\mathbf{i}z_2\otimes \mathbf{i})&\mapsto&&& \bar z_2,
\end{array} 
$$
are    isomorphisms of $[\hh ^\C,\hh ^\C]$-modules. Also, Eqs.~(\ref{eq_enmedio2}) and (\ref{eq_enmedio3}) tell that they are   isomorphisms of $\hh ^\C$-modules when we consider the  scalar  action of $  I_n\in Z(\hh^\C)$ on $\C^n$ and $(\C^n)^*$   with eigenvalue  $\lambda $ and $-\lambda $ respectively. This finishes the proof of Eq.~\eqref{eq_complexifcasoSU}.

The case $n=1$ has to be  considered separately. Denote by $\V_s$, $s\in\mathbb Z$, the one-dimensional $\C$-vector space in which $I_1$ acts scalarly with eigenvalue $3/2\, s$. Thus  $\mm^\C$ is isomorphic to the $\hh^\C\cong\C I_1$-module $2\V_{1}\oplus 3\V_{0}\oplus 2\V_{-1}$ (complex dimension 7). It is very useful to observe that $ \V_{s}\otimes \V_{t}\cong \V_{s+t}$ for all $s,t\in\mathbb Z$.

Now, we are in position to prove

\begin{lemma}\label{elnueve} If $\g=\hh\oplus\mm$ is the reductive decomposition related to the manifold  $M=\frac{\SU(m)}{S(\mathrm{U}(m-2)\times \mathrm{U}(1))}$, $m\ge3$, then 
$\dim_{\mathbb R}\hom_{\mathfrak  h}(  \mathfrak m\otimes \mathfrak m,\mathfrak m ) =99$,  
$\dim_{\mathbb R}\hom_{\mathfrak  h}(\mathfrak m,\Lambda^2 \mathfrak m)=45$, and   
$\dim_{\mathbb R}\hom_{\mathfrak  h}(\Lambda^3\mathfrak{m}  ,\mathbb R)=13.$
\end{lemma}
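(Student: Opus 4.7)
The strategy is identical to the previous cases: complexify and count multiplicities of irreducible summands in the three $\hh^\C$-modules $\mm^\C\otimes\mm^\C$, $\Lambda^2\mm^\C$, and $\Lambda^3\mm^\C$, using Lemma~\ref{le_comocontar}. The novelty is that the module $W=\V_+\oplus\V^*_-$ is no longer irreducible, so Corollary~\ref{co_contar} does not apply directly and the bookkeeping must be carried out by hand. The decisive invariant is the central character of $Z(\hh^\C)=\C I_n$, which acts scalarly with eigenvalue $0$ on $\C$, $\lambda:=1+n/2$ on $\V_+$, and $-\lambda$ on $\V^*_-$ (and with eigenvalue $3s/2$ on $\V_s$ when $n=1$). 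Since isomorphic irreducible submodules of a tensor product must share this character, the central weight of each summand already rules out most possible matches.

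For $\mm^\C\otimes\mm^\C$, the summands $\V_+\otimes\V_+$ and $\V^*_-\otimes\V^*_-$ carry central weights $\pm 2\lambda$ and therefore contain no copy of $\C$, $\V_+$, or $\V^*_-$. Copies of $\C$ arise only from $3\C\otimes 3\C$ and from the two cross terms $2\V_+\otimes 2\V^*_-$ and $2\V^*_-\otimes 2\V_+$. For the cross terms one uses the classical $\slf(n,\C)$-identity $\V_+\otimes\V^*_-\cong\mathrm{ad}(\slf(n,\C))\oplus\C$ (valid for $n\ge 2$, with both pieces of trivial central character and the adjoint not trivial), yielding $4+4$ extra copies of $\C$ on top of the $9$ from $3\C\otimes 3\C$, for a total of $17$. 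Copies of $\V_+$ arise only from $3\C\otimes 2\V_+$ and $2\V_+\otimes 3\C$, giving $12$; symmetrically, $12$ copies of $\V^*_-$. Lemma~\ref{le_comocontar} then gives $\dim_\R\hom_\hh(\mm\otimes\mm,\mm)=17\cdot 3+12\cdot 2+12\cdot 2=99$.

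Expanding $\Lambda^2\mm^\C$ via Lemma~\ref{le_contando}\,(b) yields
\[
\Lambda^2\mm^\C\cong 3\C\oplus\bigl(3\Lambda^2\V_+\oplus S^2\V_+\bigr)\oplus\bigl(3\Lambda^2\V^*_-\oplus S^2\V^*_-\bigr)\oplus 6\V_+\oplus 6\V^*_-\oplus 4\bigl(\mathrm{ad}(\slf(n,\C))\oplus\C\bigr).
\]
Since $\Lambda^2\V_+$, $S^2\V_+$, $\Lambda^2\V^*_-$, and $S^2\V^*_-$ all carry nonzero central weight $\pm 2\lambda$, the trivial summands number only $3+4=7$, and $\V_+$ and $\V^*_-$ each appear $6$ times, giving $3\cdot 7+2\cdot 6+2\cdot 6=45$. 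For $\Lambda^3\mm^\C$, a similar expansion shows that trivial summands can arise only from tuples $(n_1,n_2,n_3)$ with $n_1+n_2+n_3=3$ and $n_2=n_3$ (otherwise the central weight is nonzero), so only from $(3,0,0)$ and $(1,1,1)$. These contribute $\Lambda^3(3\C)\cong\C$ and $3\C\otimes 2\V_+\otimes 2\V^*_-\cong 12\bigl(\mathrm{ad}(\slf(n,\C))\oplus\C\bigr)$ respectively, for a total of $1+12=13$.

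The main subtlety is the case $n=1$, where $\slf(1,\C)=0$, the adjoint summand disappears, and $\V_1,\V_0,\V_{-1}$ are one-dimensional modules distinguished only by central character. Using $\V_s\otimes\V_t\cong\V_{s+t}$, the same bookkeeping by central weight runs through unchanged: each occurrence of the cross term still contributes exactly one trivial summand (now simply $\V_1\otimes\V_{-1}\cong\C$), and the totals $17,12,12$ (resp.\ $7,6,6$, resp.\ $13$) reappear, so the three dimensions $99,45,13$ hold uniformly for all $m\ge 3$.
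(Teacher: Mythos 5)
Your proof is correct and follows essentially the same route as the paper: complexify, use the scalar action of the center $Z(\hh^\C)=\C I_n$ to rule out most matches, invoke $\V\otimes\V^*\cong\slf(\V)\oplus\C$ for the cross terms, and count multiplicities of $\C$, $\V_+$, $\V^*_-$ via Schur's lemma. The only cosmetic difference is that the paper first expands over $2(\V_+\oplus\V^*_-)\oplus 3\C$ using Lemma~\ref{le_contando}\,c) and, for $n=1$, writes out the full decompositions into the $\V_s$ explicitly, while you expand the trinomial directly and treat $n=1$ uniformly by central weight; both yield the same counts $17,12,12$, $7,6,6$ and $13$.
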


\begin{proof}
Consider first the general case $n=m-2\ne 1$.
On one hand,  by Lemma~\ref{le_contando}\,c) and b), 
\begin{align*}
\Lambda^2\mm^\C 
&\cong 3\Lambda^2(\V_+\oplus\V^*_-) \oplus S^2(\V_+\oplus\V^*_-) \oplus 6\V_+\oplus6\V^*_- \oplus 3\C\\
&\cong  3\Lambda^2 \V_+\oplus3\Lambda^2\V^*_-\oplus 4(\V_+\otimes\V^*_-)\oplus  S^2\V_+\oplus S^2\V^*_- \oplus 6\V_+\oplus6\V^*_- \oplus 3\C.
\end{align*}
Observe that $   I_n$ acts on  $\Lambda^2 \V_+$ and on $S^2 \V_+$ with eigenvalue $2\lambda$, on  $\Lambda^2 \V^*_+$ and on $S^2 \V^*_+$ with eigenvalue $-2\lambda$, and on $\V_+\otimes\V^*_-$ with eigenvalue $0$, so that the summand
$3\Lambda^2 \V_+\oplus3\Lambda^2\V^*_-\oplus 4(\V_+\otimes\V^*_-)\oplus  S^2\V_+\oplus S^2\V^*_- $
does not contain any copy of $\V_+$ or $\V^*_-$ and moreover, it just contains 4 copies of the trivial module, because the $\mathfrak{sl}(n,\C)$-module   $\V\otimes\V^*$ is isomorphic   to the adjoint module $\slf(\V)$ direct sum with a copy of the trivial one. Hence
\begin{align*}
 \mathrm{dim}_{\mathbb R}\hom_{\mathfrak  h}(  \mathfrak m, \mathfrak m\wedge\mathfrak m ) 
 &=3\, \mathrm{dim}_{\mathbb C}\hom_{\mathfrak  h^\C}( \Lambda^2\mathfrak m^\C,\C ) +
  2\,\mathrm{dim}_{\mathbb C}\hom_{\mathfrak  h^\C}( \Lambda^2\mathfrak m^\C,\V^+ ) \ 2\\
  &=3(4+3)+2\cdot 6\cdot 2=45.
\end{align*}  

Similarly, we check that
$$ \mathfrak m^\C\otimes\mathfrak m^\C \cong 
 4\Lambda^2(\V_+\oplus\V^*_-) \oplus 4S^2(\V_+\oplus\V^*_-) \oplus 12\V_+\oplus12\V^*_- \oplus 9\C,
$$
and hence, taking into consideration that in $S^2(\V_+\oplus\V^*_-) \cong S^2(\V_+)\oplus S^2(\V^*_-) \oplus   (\V_+\otimes\V^*_-)   $ there is just one copy of $\C$, then
\begin{align*}
\mathrm{dim}_{\mathbb R}\hom_{\mathfrak  h}(  \mathfrak m\otimes \mathfrak m,\mathfrak m ) 
 &=3\, \mathrm{dim}_{\mathbb C}\hom_{\mathfrak  h^\C}( \mathfrak m^\C\otimes\mathfrak m^\C,\C ) \\ & \quad 
\quad+ 2\, \mathrm{dim}_{\mathbb C}\hom_{\mathfrak  h^\C}( \mathfrak m^\C\otimes\mathfrak m^\C,\V_+ ) \, 2\\
  &=3(4+4+9)+2\cdot 12\cdot 2=99.
 \end{align*}

Finally, the $\mathfrak{gl}(n, \C)$-module $\Lambda^3\mm^\C$ decomposes as a sum of:
\begin{itemize}
\item $2\Lambda^3(\V_+\oplus\V^*_-)\oplus\, 2\big(\Lambda^2(\V_+\oplus\V^*_-)\otimes (\V_+\oplus\V^*_-)\big)$, which does not contain any copy of the trivial module $\C$, since $ I_n\in\mathfrak{gl}(n, \C)$ acts with eigenvalue $\pm\lambda\pm\lambda\pm\lambda\ne0$ on any of the submodules;
\item 9 copies of $\Lambda^2(\V_+\oplus\V^*_-)$, each one with a trivial 1-dimensional submodule;
\item 3 copies of $S^2(\V_+\oplus\V^*_-)$, each one with a trivial irreducible submodule;
\item   $6\V_+\oplus6\V^*_-\oplus \C$.
\end{itemize}
In particular $\Lambda^3\mm^\C$ just contains $9+3+1=13$ copies of $\C$ and the result follows.

Consider finally the case $n=1$. We obtain the same dimensions as for $n\ne1$ for the three sets of homomorphisms, after computing
$$
\begin{array}{l}
\mm^\C\otimes \mm^\C\cong 4 \V_{2 }\oplus12\V_{ 1}\oplus17\V_{0}\oplus12\V_{ -1}\oplus4\V_{ -2}; \\
\Lambda^2\mm^\C\cong  \V_{2 }\oplus6\V_{ 1}\oplus7\V_{0}\oplus6\V_{ -1}\oplus\V_{ -2}; \\
\Lambda^3\mm^\C\cong  3\V_{2 }\oplus8\V_{ 1}\oplus13\V_{0}\oplus8\V_{ -1}\oplus3\V_{ -2}; 
\end{array}
$$
and applying Lemma~\ref{le_comocontar}\,ii).
\end{proof}

\begin{remark}\label{la phi0}
{\rm It is a remarkable fact that there are more invariant $3$-forms on $\mm$ than in the remaining 3-Sasakian homogeneous manifolds studied in the previous  subsections. 
We can provide a explicit description of the related 13 linear independent 3-forms.
Consider first    $\{\xi_i,\eta_i,\varphi_i\}_{i=1}^3$ and the metric $g$ as in Theorem~\ref{le_laestructura!}. Use our previous identification of $\mm$ with $\C^n\oplus\C^n\oplus\suf(2)$,   writing the elements as a sum of a pair of vectors and a matrix in $\suf(2)$. As the Killing form of $\suf(m)$ is $\kappa(x,y)=2m \tr(xy)$,  
it is easy to check that
\[
 g((z,w),(u,v)) = \frac12\mathrm{Re}(z^t\bar{u}+w^t\bar{v}), 
\]
for any $z,w,u,v\in\C^n$, and
\begin{equation}\label{eq_accionesficasoU}
\varphi_1(z,w)=(\textbf{i} z,\textbf{i} w),\quad  
\varphi_2(z,w)=(-\bar w,\bar z),\quad
\varphi_3(z,w)=(-\textbf{i} \bar w,\textbf{i} \bar z).
\end{equation}
This allows to check that the 2-forms $\Phi_i$'s restricted to $\C^n\oplus\C^n\le\mm$ are given by
$$
\Phi_i((z,w),(u,v))=\left\{\begin{array}{ll}
 \mathrm{Im} (z^t\bar u+w^t\bar v)/2&i=1,\\
 \mathrm{Re} (-z^t v+w^t u)/2&i=2,\\
\mathrm{Im} (-z^t v+w^t u)/2\quad & i=3.\\
\end{array}
\right.
$$
Finally, take $h=\left(1+\frac n2\right)^{-1}\tiny\begin{pmatrix}-\frac{n\mathbf{i}}2& 0& 0\\ 0 &\mathbf{i}I_n& 0 \\ 0& 0 &-\frac{n\mathbf{i}}2\end{pmatrix}\in\hh$
and the endomorphism $\varphi_0=\ad h\vert_{\mm}\in\ad\hh\vert_{\mm}\subset \textrm{End}_\hh(\mm)$. By Eq.~\eqref{eq_acciondeh},  
\begin{equation}\label{eq_accionesficasoU_0}
\varphi_0(z,w)=(\textbf{i} z,-\textbf{i} w)\quad\textrm{and}\quad\varphi_0\vert_{\mathfrak{su}(2)}=0,
\end{equation}
so that the related $\hh$-invariant 2-form $\Phi_0(-,-)=g(-,\varphi_0(-)) $  satisfies
$$\Phi_0((z,w),(u,v))=
\frac12 \mathrm{Im} (z^t\bar u-w^t\bar v).$$
As $[\hh,\spf(1)]=0$, we obtain $\varphi_0\varphi_i = \varphi_i\varphi_0$, for any $i=1,2,3$.  It is also clear that $ g(\varphi_0X,Y)+g(X,\varphi_0Y)=0$. As a consequence, the extra 3-forms are $\eta_i\wedge\Phi_0$, $i=1,2,3$, which are of course linearly independent. }
\end{remark}
 
In order to use Theorem~\ref{le_laestructura!} to assert that $\{\xi_i,\eta_i,\varphi_i\}_{i=1}^3$ is in fact a 3-Sasakian structure, we need to show  that the $\g_{0}^\C$-module $\g_{1}^\C$ is isomorphic to the $\g_{0}^\C\cong \slf(2,\C)\oplus \slf(n,\C)$-module 
$$
\g_{1}^\C\cong \C^2\otimes ( \V_+\oplus \V^*_-).
$$
We know that this is true as $\hh^\C$-modules, by  Eq.~\eqref{eq_complexifcasoSU}. We also know  (\cite{Draper:Humphreysalg}) that any  irreducible $\g_{0}^\C$-submodule $W_i$ of $\g_{1}^\C$ is the tensor product of an irreducible $\slf(2,\C)$-module $V_i$ with an irreducible $\slf(n,\C)$-module $U_i$, so that $\g_{1}^\C=\sum_i V_i\otimes U_i$ is isomorphic  as $\slf(n,\C)$-module to $\sum_i(\dim V_i)U_i\cong2\V\oplus2\V^*$. We would like to prove that $\dim V_i=2$ for any $i$. By dimension count, the only other possibility would be $\dim V_i=1$ for some $i$, but then there would be a trivial $\slf(2,\C)$-submodule of  $\g_{1}^\C$ of dimension $n$. This would give a contradiction, because  the $\slf(2,\C)$-action on $\g_{1}^\C$ is complexified of the action of $\varphi_i$'s described in Eq.~\eqref{eq_accionesficasoU}, obviously never trivial: $\varphi_i^2\vert_{\g_{1}}=-\id$.

\subsection{Several general algebraical comments.}  We have shown that $\hom_{\mathfrak  h}(  \mathfrak m\otimes \mathfrak m,\mathfrak m )$ is a real vector space of dimension $63$ in all the cases with $G\ne\SU(m)$. This was observed in the case $G_2/\SU(2)$ (\cite{esphomogeneosdeG2})  and on the spheres $\mathbb S^{4n+3}$ under the action of the symplectic group  (\cite{DraperPalomoPadge}). 
We are going to describe this set in an unified notation, which can obviously be extended to obtain concrete expressions of the invariant affine connections.

\begin{proposition}\label{pr_descriunificada}
Assume the previous situation with    $\g=\g_{0}\oplus\g_{1}\ne\mathfrak{su}(m)$. Given $\pi_{_Q}\colon\mm\to\mm$  the projection  onto the second factor relative to the decomposition $\mm=\spf(1)\oplus\g_{1}$,  and $X^h=\pi_{_ Q}(X)$ if $X\in\mm$. Let $\theta\colon \mathfrak m\times \mathfrak m\to\mathfrak m\times \mathfrak m$ be the interchanging map $\theta(X, Y)=(Y, X)$ and let $\kappa$ be the Killing form of $\g$. Then a basis of the vector space $\hom_{\mathfrak  h}(  \mathfrak m\otimes \mathfrak m,\mathfrak m )$ is provided by the following bilinear maps
\begin{equation}\label{eq_labase}
\Big\{\alpha_{rst},\beta_{0s},\beta_{rs},\beta_{0s}\circ\theta,\beta_{rs}\circ\theta,\gamma_{0s},\gamma_{rs}: r,s,t=1,2,3\Big\},
\end{equation}
which are defined, for any $X,Y\in\mm$, by
\begin{align*}
& \alpha_{rst}(X, Y)=\eta_r(X)\eta_s(Y)\xi_t,\\
& \beta_{0s}(X,Y)=\eta_s(X)Y^h, \qquad 
\beta_{rs}(X, Y)=\eta_s(X)\varphi_r(Y^h),& \\
&\gamma_{0s}(X, Y)=\kappa(X^h,Y^h)\xi_s, \qquad 
 \gamma_{rs}(X, Y)=\eta_r([X^h,Y^h]_{\spf(1)})\xi_s.&
\end{align*}
\end{proposition}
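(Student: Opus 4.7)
The plan is to establish the Proposition in two movements: first, verify that each of the 63 listed bilinear maps lies in $\hom_\hh(\mm\otimes\mm,\mm)$; second, verify that these maps are linearly independent. Because the previous subsections have already computed $\dim_\R\hom_\hh(\mm\otimes\mm,\mm)=63$ whenever $\g\ne\suf(m)$, these two facts force them to be a basis, with no further dimension count needed.

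For invariance, the structural input is $[\hh,\spf(1)]=0$, coming from $\g_0=\hh\oplus\spf(1)$ being a sum of ideals. This yields three recurring tools: each Reeb element $\xi_s$ is fixed by $\ad\hh$; each characteristic $1$-form $\eta_s=g(\xi_s,-)$ and each endomorphism $\varphi_s$ are $\hh$-equivariant (from the explicit formula $\varphi_s\vert_{\g_1}=\ad\xi_s$ in Theorem~\ref{le_laestructura!} together with the fact that $\ad A$ commutes with $\ad\xi_s$); and the projection onto $\g_1$ commutes with $\ad A$ for every $A\in\hh$, since $\ad A$ annihilates $\spf(1)$ and preserves $\g_1$. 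As $\kappa$ is also $\hh$-invariant, the invariance of $\alpha_{rst}$, $\beta_{0s}$, $\beta_{rs}$ (and their $\theta$-transposes), and $\gamma_{0s}$ becomes a direct verification. The least routine case is $\gamma_{rs}$: the key point is that for $A\in\hh$ and $X,Y\in\mm$, the element $[A,[X^h,Y^h]]$ lies in $\hh$, because expanding it via Jacobi gives an element of $\g_0$ whose $\spf(1)$-component must vanish (using $[A,\spf(1)]=0$), so the $\spf(1)$-component of $[X^h,Y^h]$ is itself $\hh$-invariant.

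For linear independence, I would split any alleged relation according to the decomposition $\mm=\spf(1)\oplus\g_1$, evaluating on four types of pairs. On $(\xi_a,\xi_b)$ only the $\alpha$'s survive, and the values $\alpha_{rst}(\xi_a,\xi_b)=\delta_{ra}\delta_{sb}\xi_t$ kill the $27$ coefficients of the $\alpha$'s. On pairs $(\xi_a,Y)$ with $Y\in\g_1$, only $\beta_{0s}$ and $\beta_{rs}$ remain, leaving $c_{0a}Y+\sum_r c_{ra}\varphi_r(Y)=0$ for every $Y\in\g_1$; this is ruled out because $\id,\varphi_1,\varphi_2,\varphi_3$ are linearly independent endomorphisms of $\g_1$, as follows from $\g_1^\C\cong\C^2\otimes W$ with $\spf(1)^\C$ acting faithfully via matrix multiplication on the $\C^2$-factor. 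A symmetric argument on $(X,\xi_b)$ with $X\in\g_1$ disposes of the $\theta$-transposes. Finally, on $(X,Y)\in\g_1\times\g_1$, one is left with $c_{0s}\kappa(X,Y)+\sum_r c_{rs}\eta_r([X,Y]_{\spf(1)})=0$ for each $s$; separating symmetric and skew-symmetric parts yields $c_{0s}=0$ from the non-degeneracy of $\kappa\vert_{\g_1}$, and then $\sum_r c_{rs}\eta_r\circ[\cdot,\cdot]_{\spf(1)}=0$ gives $c_{rs}=0$ as soon as one knows that $[\cdot,\cdot]_{\spf(1)}\colon\g_1\otimes\g_1\to\spf(1)$ is surjective.

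The main obstacle I anticipate is precisely this surjectivity. I would argue it by contradiction: if $[\g_1,\g_1]\subset\hh$, then for any $\xi\in\spf(1)$, $X,Y\in\g_1$, associativity of the Killing form together with $\kappa(\spf(1),\hh)=0$ would give $\kappa([\xi,X],Y)=\kappa(\xi,[X,Y])=0$ for all $Y\in\g_1$; since $[\xi,X]\in\g_1$ and $\kappa\vert_{\g_1}$ is non-degenerate, this forces $[\xi,X]=0$ for all $X$, contradicting the faithfulness of the $\spf(1)$-action on $\g_1$ guaranteed by $\g_1^\C\cong\C^2\otimes W$. This completes the proposal.
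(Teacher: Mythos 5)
Your proposal follows the same route as the paper, whose proof is a one-line appeal to the earlier dimension count $\dim\hom_{\hh}(\mm\otimes\mm,\mm)=63$ together with the (unwritten) verification that the $63$ listed maps are invariant and independent; your write-up supplies exactly those verifications, and both the invariance checks (in particular the Jacobi argument for $\gamma_{rs}$, using that $[A,[X^h,Y^h]]\in\hh$ because the $\spf(1)$-component of $\ad A$ applied to anything in $\g_{0}$ vanishes) and the evaluation scheme for independence are correct. The one imprecise step is the last one: to conclude $c_{rs}=0$ from $\sum_r c_{rs}\,\eta_r([X,Y]_{\spf(1)})=0$ you need the elements $[X,Y]_{\spf(1)}$, $X,Y\in\g_{1}$, to span all of $\spf(1)$, whereas your contradiction argument (assuming $[\g_{1},\g_{1}]\subset\hh$) only rules out the case where that span is zero. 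The repair is immediate with the very same Killing-form computation: if $\xi\in\spf(1)$ is $\kappa$-orthogonal to that span, then $\kappa(\xi,[X,Y])=\kappa([\xi,X],Y)=0$ for all $X,Y\in\g_{1}$, forcing $\ad\xi\vert_{\g_{1}}=0$ and hence $\xi=0$ by faithfulness. Alternatively, the identity $\eta_r([X,Y]_{\spf(1)})=2g(\varphi_rX,Y)$ (Lemma~\ref{cuentasauxiliares}\,(b)) reduces this final step to the linear independence of $\varphi_1,\varphi_2,\varphi_3$ on $\g_{1}$, which you already invoked when handling the $\beta$'s.
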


\begin{proof}
This   is simply a corollary of all the previous results, since the set in \eqref{eq_labase} provides 63 independent elements in  $\hom_{\mathfrak  h}(  \mathfrak m\otimes \mathfrak m,\mathfrak m )$.
\end{proof}

We would like to dive a little bit more into   several algebraical facts concerning  the  3-Sasakian data.

\begin{remark}\label{re_sts} \normalfont
The complex module $W$ involved  a 3-Sasakian data (Definition~\ref{def_data}, Eq.~\eqref{eq_imp2}) is a  (complex) \emph{symplectic triple system}, as defined in \cite{sts}. This means that there exist a triple product $[\ ,\ ,\ ]$ in $W$ and a symplectic form $(\ ,\ )\colon W\times W\to\C$ satisfying the following list of identities, for any $x,y,z,u,v,w\in W$: 
\begin{align*}
  &[x,y,z]=[y,x,z],\quad ([x,y,u],v)=-(u,[x,y,v]),\\
  &[x,y,z]-[x,z,y]=(x,z)y-(x,y)z+2(y,z)x,\\
  &[x,y,[u,v,w]]=[[x,y,u],v,w]+[u,[x,y,v],w]+[u,v,[x,y,w]],
  \end{align*}
Conversely, given any symplectic triple system $(W, [\ ,\ ,\ ],(\ ,\ ))$, consider the set of inner derivations $\mathfrak{inder}(W)$, that is, the linear span of the operators $[x,y,-]\in\mathfrak{sp}(W, (\ ,\ ))$. Then, $\mathfrak{inder}(W)$ is a Lie subalgebra of $\mathfrak{gl}(W)$ and  $\g=\g_{0}\oplus  \g_{1}$ is a $\mathbb Z_2$-graded Lie algebra for 
\[  \g_{0}=\mathfrak{sp}(V)\oplus \mathfrak{inder}(W),\qquad \g_{1}= V\otimes W,
\]
being $V$ a two dimensional vector space endowed with a  nonzero symplectic form. According to \cite[Theorem~5.3]{triples}, $\g$ is a simple algebra if, and only if,  $W$ is simple as a symplectic triple system, which is our situation.
  
These symplectic triple systems appeared as ingredients in the constructions of 5-graded Lie algebras $\g=\oplus_{i=-2}^2\g_i$ with one dimensional corners: $\dim\g_{\pm2}=1$. They are strongly related to Freudenthal triple systems and to Faulkner ternary algebras (see \cite{triples} for more details and references). A natural question on 3-Sasakian homogeneous manifolds is how the curvature tensor could be naturally expressed by means of this ternary product.    
\end{remark}

\begin{remark}\label{remark6}    
\normalfont We have proved that we have a 3-Sasakian data on each of our   cases, so that the 3-Sasakian  structure is described by Theorem~\ref{le_laestructura!}. 
We have used a case-by-case test proving $\g_{1}^\C\cong\C^2\otimes W$. But now  we would like to prove  that some conditions are sufficient to assure Eq.~\eqref{eq_imp2}  starting with Eq.~\eqref{eq_imp1}, that is, with
the $\mathbb Z_2$-grading $\g=\g_{0}\oplus\g_{1}$    whose even part is sum of the ideals $\g_{0}= \spf(1)\oplus\hh$. Namely,
\begin{center}
\textit{  If $\g$ is simple, $\hh$ is semisimple and $\g_{1}$ is an $\hh$-irreducible module\\ (i.~e. $G\ne \SU(m)$), then Eq.~\eqref{eq_imp2} holds. }
\end{center}
Indeed, as  $[\spf(1),\hh]=0$ and  $[\spf(1),\g_{1}]\subset[\g_{0},\g_{1}]\subset \g_{1}$, then it holds  $\ad(\spf(1))\vert_{\g_{1}}\subset\textrm{End}_\hh(\g_{1})$.   Besides $[\spf(1),\g_{1}]\ne0$ since otherwise $\spf(1)$ would be an ideal of $\g$, hence $\spf(1)\cong \ad(\spf(1))\vert_{\g_{1}}$. As the (finite-dimensional) centralizer $\textrm{End}_\hh(\g_{1})$ is a real division associative algebra (consequently of dimension either 1 or 2 or 4) containing a copy of $\mathbb H_0$, then such centralizer has to be isomorphic to $\mathbb H$. If $\hh$ is simple, \cite[Lemma~5.7]{mi G2} tells that $ \g_{1}^\C $ is sum of two isomorphic irreducible $\hh^\C$-modules (and it has real dimension multiple of 4). On the other hand, every irreducible module for a complex semisimple algebra is isomorphic to the tensor product of irreducible modules for the simple components. That is, we know that, as $\g_{0}^\C$-modules,  $ \g_{1}^\C\cong\sum_{i} V_i\otimes U_i $ with $V_i$ an irreducible $\spf(1)^\C$-module and $U_i$ an irreducible $\hh^\C$-module. Hence, as $\hh^\C$-module, $ \g_{1}^\C\cong \sum_i(\dim V_i) U_i $, so that all $U_i$ are isomorphic $\hh^\C$-modules (to certain $W$) and $\sum_i\dim V_i=2$, which was the number of irreducible $\hh^\C$-submodules of $ \g_{1}^\C$.
We are not in the situation of two summands with $\dim V_i=1$ for $i=1,2$, since in such a case, the $\spf(1)$-action on $\g_1$ would be trivial. Thus $ \g_{1}^\C\cong  V\otimes W$ for $V$ an irreducible $\spf(1)^\C$-module of dimension 2. But there is only one nontrivial $\spf(1)^\C=\mathfrak{sl}(2,\C)$-module of dimension 2, the natural module $\C^2$, so that Eq.~\eqref{eq_imp2} holds. The proof in the semisimple case is the same one,   taking into account that \cite[Lemma~5.7]{mi G2} can be adapted to such hypothesis: only the complete reducibility was used through the proof. 
 
Observe also that we can change the hypothesis \lq\lq$\g$ simple\rq\rq\  with the   hypothesis  \lq\lq$\spf(1)$ is not an ideal of $\g$\rq\rq.   
\end{remark}

\section{Invariant Affine Connections with Skew-Symmetric Torsion}\label{affine}

In this   section, first we   geometrically describe the set of invariant metric affine connections with skew  torsion. Second, we  determine which of  them are Einstein with skew  torsion. Third, we characterize those 
with symmetric Ricci tensor. Fourth, we consider a  notion generalizing that one of Einstein with skew  torsion, namely, when the Ricci tensor is a multiple of the metric on the canonical horizontal and vertical distributions, but with different factors.  Fifth, we  find a distinguished affine connection in the above family, characterized by parallelizing all  the Reeb vector fields associated with the $3$-Sasakian  structure.
Finally, we study the invariant affine connections with parallel skew  torsion.

\subsection{Invariant Affine Connections }\label{sec_invariantes} Here we give a basis of the space $\hom_{\mathfrak  h}(\wedge^3\mathfrak{m},\mathbb R)$, which is used to describe explicitly the set of all invariant affine connections with skew  torsion (in particular, they are metric) on a homogeneous $3$-Sasakian manifold. 

Let $(M=G/H,g)$ be a $3$-Sasakian homogeneous manifold with a fixed reductive decomposition as in \eqref{eq_descomposicionreductiva}. We will focus on dimension at least $7$, since the $3$-dimensional sphere $\mathbb S^3$ has been previously studied in \cite{DraperGarvinPalomo}. Let $\{\xi_i,\eta_i,\varphi_i\}_{i=1,2,3}$ be  three compatible Sasakian structures on $M$. Also recall, for any $X,Y\in\mathfrak{X}(M)$, the 2-forms $\Phi_i(X,Y)= g(X,\varphi_iY)$. We put $X^{v}$ the component of $X$ in  the vertical distribution $ Q^{\perp}$, i.~e., $X^v=\sum_{j=1}^3\eta_j(X)\xi_j$,  and   we consider the unique vectorial product  {here} such that  $\xi_1\times \xi_2=\xi_3$ (that is, $\times=\frac12[\ ,\ ]$). In particular, $\eta_r\wedge\Phi_s(X_1,X_2,X_3) =  \, \sum_{i=1}^3 \eta_r(X_{i}) \Phi_s (X_{i+1},  X_{i+2} )$ 
(indices modulo 3).\footnote{Our convention for  the exterior product of a $p$-form $\omega_1$ and a $q$-form $\omega_2$ is the following
\[
\omega_1\wedge \omega_2(X_1,\dots,X_{p+q})=   
\frac1{p!q!}\sum_{\sigma\in  S_{p+q}}(-1)^{[\sigma]} 
\omega_1(X_{\sigma(1)},\dots,X_{\sigma(p)})\omega_1(X_{\sigma(p+1)},\dots,X_{\sigma(p+q)}).
\]}
Denote by $T^o$ and $ T^{rs}$  the $(1,2)$-tensors given by  
\begin{equation}\label{def_torsiones}
 \eta_1\wedge\eta_2\wedge\eta_3(X,Y,Z)=g(T^o(X,Y),Z),\quad 
\eta_r\wedge\Phi_s(X,Y,Z)=g(T^{rs}(X,Y),Z),
\end{equation}
for any $X,Y,Z\in\mathfrak X(M)$ and $r,s \in \{1,2,3\}$. 
For the $\SU$-case, it will also be convenient
 to introduce the tensors $T^{r0}$ given by (see Remark \ref{la phi0})
$$
\eta_r\wedge\Phi_0(X,Y,Z)=g(T^{r0}(X,Y),Z).
$$

\begin{lemma}\label{bases}
\begin{enumerate}[i)]
\item For $G\ne \SU(m)$, the set $\{\eta_1\wedge\eta_2\wedge\eta_3\}\cup\{\eta_r\wedge\Phi_s : r,s=1,2,3\}$ at the point $o$  is a basis   of $\,\hom_{\mathfrak  h}(\wedge ^3\mathfrak{m},\mathbb R)$. 
 \item For $G=\SU(m)$, $m\geq 3$, the set $\{\eta_1\wedge\eta_2\wedge\eta_3\}\cup\{\eta_r\wedge\Phi_s : r=1,2,3, \ s=0,1,2,3\}$ at the point $o$ is a basis of $\hom_{\mathfrak  h}(\wedge^3\mathfrak{m},\mathbb R)$.
\item For any $X,Y\in\mathfrak{X}(M)$,  {$r=1,2,3$, $s=0,1,2,3$, we have }
\[ T^o(X,Y)=X^v \times Y^v,\qquad 
 T^{rs}(X,Y)= -\eta_r(X)\varphi_sY +\eta_r(Y)\varphi_sX +\Phi_s(X,Y)\xi_r. \]
\end{enumerate}\end{lemma}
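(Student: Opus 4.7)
The plan is to address (iii) first, as it is essentially a routine verification: using that $\varphi_s$ is $g$-skew-symmetric (hence $g(\varphi_sY,Z) = -\Phi_s(Y,Z)$), a direct expansion of $g(T^{rs}(X,Y),Z)$ from the claimed formula yields
\[
\eta_r(X)\Phi_s(Y,Z) - \eta_r(Y)\Phi_s(X,Z) + \eta_r(Z)\Phi_s(X,Y) = (\eta_r\wedge\Phi_s)(X,Y,Z).
\]
Similarly, expanding $X^v\times Y^v = \sum_{i,j,k}\epsilon_{ijk}\eta_i(X)\eta_j(Y)\xi_k$ and taking $g(\cdot,Z)$ recovers the Leibniz determinant for $\eta_1\wedge\eta_2\wedge\eta_3$. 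This verifies both formulas (and also serves for $T^{r0}$ with $s=0$).

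For (i) and (ii), the candidate forms are $\mathfrak{h}$-invariant: $\eta_i$ and $\Phi_i = \tfrac12 d\eta_i$, for $i=1,2,3$, inherit the $G$-invariance of $\xi_i$, while in the $\SU(m)$ case $\Phi_0 = g(-,\varphi_0-)$ is $\mathfrak{h}$-invariant because $\varphi_0 = \ad h|_\mathfrak{m}$ with $h\in Z(\mathfrak{h})$ centralizes $\mathfrak{h}$. Since the cardinalities $1 + 9 = 10$ and $1 + 12 = 13$ coincide with the dimensions of $\hom_{\mathfrak{h}}(\wedge^3\mathfrak{m},\mathbb{R})$ already computed in Section \ref{homogeneas}, it suffices to establish linear independence.

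The key observation is that, under the splitting $\mathfrak{m} = \spf(1) \oplus \g_{1}$, each $\Phi_s$ vanishes on mixed pairs (both summands are $\varphi_s$-stable and mutually $g$-orthogonal), so $\eta_r \wedge \Phi_s$ is supported on triples either of type \emph{all vertical} or of type \emph{one vertical and two horizontal}, while $\eta_1\wedge\eta_2\wedge\eta_3$ is supported purely on \emph{all vertical} triples. Testing a vanishing combination $\lambda_0\,\eta_1\wedge\eta_2\wedge\eta_3 + \sum_{r,s}\lambda_{rs}\,\eta_r\wedge\Phi_s=0$ on $(\xi_r,X,Y)$ with $X,Y\in\g_{1}$ reduces it to $\sum_s\lambda_{rs}\Phi_s(X,Y)=0$ for each fixed $r$, and a final evaluation on $(\xi_1,\xi_2,\xi_3)$ will yield $\lambda_0=0$.

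The main obstacle is then the linear independence of the $\Phi_s$'s restricted to $\g_{1}$. For $G\ne\SU(m)$, this follows from the quaternionic identities: for any nonzero $X\in\g_{1}$, the matrix $[\Phi_i(X,\varphi_jX)]_{i,j=1}^{3}$ equals $-|X|^2\, I_3$, because $\varphi_i\varphi_j|_{\g_{1}} = -\delta_{ij}\id + \epsilon_{ijk}\varphi_k$ and $g(X,\varphi_kX)=\Phi_k(X,X)=0$ by alternation. For $G=\SU(m)$, the explicit formulas of Remark \ref{la phi0} on vectors of the form $(z,0)$ and $(0,w)\in\g_{1}\cong\C^n\oplus\C^n$ separate $\Phi_0$ from $\Phi_1$ (they agree on the first factor and differ by a sign on the second), while pairs of type $\bigl((z,0),(0,v)\bigr)$ isolate $\Phi_2$ and $\Phi_3$; hence a vanishing combination $a\Phi_0+b\Phi_1+c\Phi_2+d\Phi_3=0$ forces $a=b=c=d=0$, concluding the argument.
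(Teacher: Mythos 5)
Your proposal is correct and follows essentially the same route as the paper: both arguments reduce items i) and ii) to linear independence via the dimension counts of Section~\ref{homogeneas}, verify independence by evaluating on triples consisting of one Reeb vector and a pair $(X,\varphi_jX)$ with $X$ horizontal, and obtain item iii) by direct expansion using the skew-symmetry of $\varphi_s$. The only substantive difference is that you spell out the $\SU(m)$ case (which the paper dismisses as ``similar''), correctly noting that $\Phi_0$ must be separated from $\Phi_1,\Phi_2,\Phi_3$ via the explicit formulas of Remark~\ref{la phi0} rather than by the quaternionic identities alone --- a worthwhile precision, since $g(\varphi_0X,\varphi_sX)$ need not vanish.
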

\begin{proof}
For item i), recall that  all $G$-homogeneous $3$-Sasakian manifolds with $G\ne \SU(m)$ satisfy  
$\dim_{\mathbb R}\hom_{\mathfrak  h}(\wedge^3\mathfrak{m},\mathbb R)=10.$ Thus, we only need to check the linear independence of the set given in $i)$. Indeed, consider a linear combination
$a\, \eta_1\wedge\eta_2\wedge\eta_3+\sum_{r,s=1}^3 b_{rs}\,  \eta_r\wedge\Phi_s =0.$ Given a unit vector $X\in\mm$    in the horizontal subspace $\g_1$, we take $(Y_1,Y_2,Y_3)=(\xi_i,\varphi_jX,X)$. Then, taking into account that $g(\varphi_jX,\varphi_sX)=\delta_{js}g(X,X)=\delta_{js}$, we compute 
$$
\begin{array}{rl}
 0&= \Big( a\, \eta_1\wedge\eta_2\wedge\eta_3+\sum_{r,s=1}^3 b_{rs} \,\eta_r\wedge\Phi_s\Big)(Y_1,Y_2,Y_3) \\
&=\sum_{r,s=1}^3 b_{rs} \, \sum_{k=1}^3 \eta_r(Y_k)\Phi_s(Y_{k+1},Y_{k+2}) 
=\sum_{r,s=1}^3 b_{rs}\,  \delta_{ri}g(\varphi_jX,\varphi_sX) = b_{ij}.
\end{array}
$$
Now $0=a\, \eta_1\wedge\eta_2\wedge\eta_3(\xi_1,\xi_2,\xi_3)=a$. This  shows the linear independence in case i).   
The proof of item~ii) is similar, bearing in mind that  $\dim_{\mathbb R}\hom_{\mathfrak  h}(\wedge^3\mathfrak{m},\mathbb R)=13$.

Finally, we get the metrically equivalent torsions to the 3-forms $\eta_1\wedge\eta_2\wedge\eta_3$ and $\eta_r\wedge\Phi_s $. For any $X_1,X_2,X_3\in\mathfrak{X}(M)$, we have
$$
\begin{array}{l}
   \eta_1\wedge\eta_2\wedge \eta_3(X_1,X_2,X_3) = 
\sum_{\sigma\in S_3 } (-1)^{[\sigma]}  \eta_{1}(X_{\sigma(1)}) \eta_{2}(X_{\sigma(2)}) \eta_{3}(X_{\sigma(3)}) \vspace{4pt}\\
\  = \det ( X_1^v, X_2^v,X_3^v) = g( X_1^v \times X_2^v,X_3^v)=g( X_1^v \times X_2^v,X_3),
\end{array}
$$
and then $T^o(X,Y)=X^v \times Y^v$.
Also, since $\Phi_s$ is alternating,  
\begin{align*}     \eta_r\wedge \Phi_s(X_1,X_2,X_3)  
& = \eta_r(X_{1})g(X_{2},\varphi_sX_{3})+\eta_r(X_{2})g(X_{3},\varphi_sX_{1})+\eta_r(X_{3})g(X_{1},\varphi_sX_{2}) \\
& = g\left( X_3, -\eta_r(X_1)\varphi_sX_2  + \eta_r(X_2)\varphi_sX_1 +g(X_1,\varphi_sX_2)\xi_r 
\right),
\end{align*}
so we get the desired expression for $T^{rs}$.  
\end{proof}
 
\begin{remark}
{ \rm 
Note that our hypothesis $\dim M\ge 7$ is essential for item i). In fact, for $\dim M = 3$ we have $\dim\hom (\Lambda^3\mathfrak{m},\R)=1$.}  
\end{remark}
The above explicit description of the basis of $\hom_{\mathfrak{h}}(\Lambda^3\mathfrak{m},\R)$ and Lemma
\ref{igualdadconjuntos} provide the following geometric description for all $G$-invariant  affine connections with skew-symmetric torsion on $3$-Sasakian homogeneous manifolds $M$ with $\dim M \ge 7$.

\begin{corollary}\label{co_lasskew}
Let $M=G/H$ be a 3-Sasakian homogeneous manifold with $\dim M \ge 7$. Then, any $G$-invariant  affine connection on $M$ with skew-symmetric torsion is given by   
\begin{enumerate}[i)]
\item If $G\neq \SU(m)$, 
\begin{equation}\label{eq_todasskewtorsion}
\nabla=\nabla^g+\frac12\left( a T^o +\sum_{r,s=1}^3 b_{rs} T^{rs}\right), 
\end{equation}
for some $a\in\R$ and $B=(b_{rs})\in  \mathcal{M}_{3}(\R)$.
\item If $M=\SU(m)/ S(\mathrm{U}(m-2)\times \mathrm{U}(1))$ with $m\ge3$, 
\begin{equation}\label{eq_todasskewtorsion2}
\nabla=\nabla^g+ \frac12\left( a T^o +\sum_{r,s=1}^3 b_{rs} T^{rs}+\sum_{l=1}^3c_lT^{l0}\right), 
\end{equation}
for some $a\in\R$, $(c_1,c_2,c_3)^t\in\R^3$ and $B=(b_{rs})\in  \mathcal{M}_{3}(\R)$.
\end{enumerate} 
\end{corollary}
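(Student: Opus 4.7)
The proof is essentially an assembly of the previous results, so the plan is to string them together carefully rather than to produce new computations.

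First, I would invoke Lemma~\ref{igualdadconjuntos} to identify $\mathcal{C}_S(G/H,g)$ with $\hom_{\mathfrak h}(\Lambda^3\mm,\R)$ via the map $\nabla\mapsto \omega_{_{\nabla}}$, where the correspondence is normalized so that $\nabla=\nabla^g+\frac12 T^{\nabla}$ and $\omega_{_\nabla}(X,Y,Z)=g(T^{\nabla}(X,Y),Z)$ (this is the content of the last paragraph of the proof of Lemma~\ref{igualdadconjuntos}, which gives $\Theta(\nabla)=\tfrac12\omega_{_\nabla}$, combined with the fact that $\nabla$ and $\nabla^g$ share geodesics, so the difference tensor is skew-symmetric).

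Second, I would apply Lemma~\ref{bases} to expand $\omega_{_\nabla}$ in the explicit basis of $\hom_{\mathfrak h}(\Lambda^3\mm,\R)$: in case i), $\omega_{_\nabla}=a\,\eta_1\wedge\eta_2\wedge\eta_3+\sum_{r,s=1}^3 b_{rs}\,\eta_r\wedge\Phi_s$ for scalars $a\in\R$ and $B=(b_{rs})\in\mathcal{M}_3(\R)$; in case ii), one also has the additional terms $\sum_{l=1}^3 c_l\,\eta_l\wedge\Phi_0$. This step uses that the $\mm$-level basis at $o$ extends by $G$-invariance to a basis of the corresponding space of $G$-invariant $3$-forms on $M$.

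Third, I would use the metric identification of $3$-forms with $(1,2)$-tensors given by Eq.~\eqref{def_torsiones}, which is $\R$-linear in the $3$-form, to obtain
\[
T^{\nabla}=aT^{o}+\sum_{r,s=1}^{3} b_{rs}T^{rs} \qquad \text{(respectively, plus } \sum_{l=1}^{3} c_l T^{l0}\text{)},
\]
and the explicit expressions for $T^o$ and $T^{rs}$ are those in Lemma~\ref{bases}~iii). Substituting into $\nabla=\nabla^g+\frac12 T^{\nabla}$ yields the formulas \eqref{eq_todasskewtorsion} and \eqref{eq_todasskewtorsion2}.

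There is no substantive obstacle, since the nontrivial content (counting the dimensions of $\hom_{\mathfrak h}(\Lambda^3\mm,\R)$, producing an explicit basis, and translating Nomizu's correspondence into the $\Theta$-picture) has already been carried out in Lemmas~\ref{igualdadconjuntos} and~\ref{bases}. The only point that requires a brief word is the passage from the characterization at the origin $o$ to a formula valid on all of $M$; this is automatic from $G$-invariance of both sides and of the tensor fields $\eta_i,\Phi_s,\xi_i,g$ involved in the construction of $T^o$, $T^{rs}$ and $T^{l0}$.
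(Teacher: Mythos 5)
Your proposal is correct and follows exactly the route the paper takes: the corollary is obtained by combining Lemma~\ref{igualdadconjuntos} (the bijection $\mathcal{C}_{S}(G/H,g)\cong\hom_{\mathfrak h}(\Lambda^3\mm,\R)$, with the normalization $\nabla=\nabla^g+\tfrac12T^\nabla$) with the explicit basis of invariant $3$-forms from Lemma~\ref{bases} and the metric identification \eqref{def_torsiones}. Your added remarks on the $\Theta(\nabla)=\tfrac12\omega_{_\nabla}$ normalization and on extending from $o$ to all of $M$ by $G$-invariance are accurate and only make explicit what the paper leaves implicit.
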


As usual, for $B=(b_{rs})\in  \mathcal{M}_{3}(\R)$ and $c\in\R^3$, we use the notations $\|B \|^2=\sum_{r,s=1}^3b_{rs}^2$ and $\|c\|^2=\sum_{l=1}^3c_l^2$.     

 \subsection{Einstein with skew  torsion affine connections}\label{sec_Einstein}
 
In this section we determine which of the invariant affine connections with skew  torsion on a homogeneous $3$-Sasakian manifold   satisfy the Einstein with skew  torsion condition \eqref{einstein}.

Let us recall that the tensor $\s\in \mathcal{T}^{(0,2)}(M)$ defined in (\ref{eq_defS}) plays a key role in
the curvature identities for Riemann-Cartan manifolds with totally skew-symmetric torsion. The following result provides the expressions of the tensors $\s$ for the affine connections given in  (\ref{eq_todasskewtorsion2}).  Recall that the horizontal distribution is denoted by $ Q$ and the vertical distribution by $ Q^{\perp}=\mathrm{Span}\{\xi_1,\xi_2,\xi_3\}$.

\begin{proposition}\label{prop_Sys} 
Let $M^{4n+3}=G/H$ be a $3$-Sasakian homogeneous manifold as in Theorem \ref{th_lasSashomogeneas}, with $\dim M\geq 7$. Let $\nabla$ be an affine connection on $M=G/H$ given by (\ref{eq_todasskewtorsion2}), where $c=0$ when $G\neq \SU(m)$, $m\geq 3$.  
\begin{enumerate}
\item[i)] \label{tensorS}  The tensor $\s$ is given, if $X,Y $ are horizontal vector fields,  by 
$$
\begin{array}{l}
\s\vert_{ Q\times  Q^{\perp}}=0,\\
\s(\xi_i,\xi_k) =  2\delta_{ik}(a-\mathrm{tr}(B))^2+4n\sum_{s=1}^3 b_{is}b_{ks}+4nc_ic_k,\\
  \s(X,Y)=
2(\|B\|^2+\|c\|^2)g(X,Y)+4\sum_{s,j=1}^3 b_{js}c_jg(\varphi_0X,\varphi_sY).
\end{array}
$$
\item[ii)] The symmetric part of the Ricci tensor is given, if $X,Y $ are horizontal,  by 
\[
\begin{array}{l}
  \mathrm{Sym(Ric^{\nabla})}\vert_{   Q\times  Q^{\perp}}  =0,\\ 
\mathrm{Sym(Ric^{\nabla})}(\xi_i,\xi_k) = \left(4n+2-\frac12\left(a-\mathrm{tr}(B)\right)^2\right)\delta_{ik}-n\sum_{s=1}^3 b_{is}b_{ks}-nc_ic_k, \\
    \mathrm{Sym(Ric^{\nabla})}(X,Y)= \left(4n+2-\frac{\|B\|^2+\|c\|^2}{2}\right)g(X,Y)-\sum_{s,j=1}^3 b_{js}c_{j}g(\varphi_0X,\varphi_sY). 
    \end{array}
\]

\item[iii)] The scalar curvature is equal to
$$s^\nabla=
(4n+2)(4n+3) -	\frac32(a-\mathrm{tr}(B))^2-3n\left(\|B\|^2+\|c\|^2\right).
$$
\end{enumerate}
\end{proposition}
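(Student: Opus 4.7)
The strategy is to combine the explicit forms of the torsion provided by Corollary~\ref{co_lasskew} and Lemma~\ref{bases}\,iii) with the general identities~\eqref{formulicas} and Kashiwada's theorem $\mathrm{Ric}^g=2(2n+1)g$. Writing $T^\nabla=aT^o+\sum_{r,s=1}^3 b_{rs}T^{rs}+\sum_{l=1}^3 c_l T^{l0}$ (with $c_l=0$ off the $\SU$-case), the tensor $\s$ expands bilinearly in the coefficients $(a,b_{rs},c_l)$, and the proposition reduces to evaluating the resulting cross-sums.

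For i), I would fix a $g$-orthonormal basis of $T_oM\cong\mm$ of the form $\{\xi_1,\xi_2,\xi_3\}\cup\{e_1,\dots,e_{4n}\}$ with $\{e_k\}\subset Q$, and use the key identities $T^o(\xi_j,\xi_i)=\xi_j\times\xi_i$ (so $T^o$ contributes only to $\s\vert_{Q^\perp\times Q^\perp}$); $T^{rs}(\xi_j,e_k)=-\delta_{jr}\varphi_s e_k$ and $T^{rs}(e_j,e_k)=\Phi_s(e_j,e_k)\xi_r$ on horizontal vectors; and analogous formulas with $\varphi_0$ for $T^{l0}$, together with the fact that $\varphi_0\xi_i=0$. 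The horizontal-vertical block $\s\vert_{Q\times Q^\perp}$ vanishes summand by summand, because each remaining contribution either pairs a horizontal vector against $\xi_r$ or carries a factor $\eta_r(e_k)=0$. The vertical block, after using $\sum_j g(\xi_j\times\xi_i,\xi_j\times\xi_k)=2\delta_{ik}$ and the permutation identities for $\Phi_s(\xi_j,\xi_i)=\epsilon_{sij}$, collects into the stated quadratic $2(a-\mathrm{tr}(B))^2\delta_{ik}+4n\sum_s b_{is}b_{ks}+4nc_ic_k$. The horizontal block reduces to the stated expression through $\sum_{j=1}^{4n}g(\varphi_se_j,\varphi_te_j)=4n\delta_{st}$ and $\sum_j\Phi_s(e_j,X)\Phi_t(e_j,Y)=\delta_{st}g(X,Y)$ for $s,t\in\{1,2,3\}$; the mixed $b_{js}c_l$-products produce precisely the extra term $4\sum_{s,j}b_{js}c_j\,g(\varphi_0X,\varphi_sY)$, since $\varphi_0$ commutes with each $\varphi_s$ on $Q$.

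Part ii) is immediate from i) after substitution into $\mathrm{Sym}(\mathrm{Ric}^\nabla)=\mathrm{Ric}^g-\tfrac14\s$, using Kashiwada's identity $\mathrm{Ric}^g=(4n+2)g$. For iii), since $\mathrm{Skew}(\mathrm{Ric}^\nabla)$ is $g$-traceless, $s^\nabla=\mathrm{tr}_g\mathrm{Sym}(\mathrm{Ric}^\nabla)=s^g-\tfrac14\mathrm{tr}_g\s$ with $s^g=(4n+2)(4n+3)$. The trace splits as $\sum_{i=1}^3\s(\xi_i,\xi_i)=6(a-\mathrm{tr}(B))^2+4n(\|B\|^2+\|c\|^2)$ plus $\sum_{k=1}^{4n}\s(e_k,e_k)=8n(\|B\|^2+\|c\|^2)$, the mixed $\varphi_0$-cross-term dropping out because $\mathrm{tr}(\varphi_0\varphi_s\vert_Q)=0$ for $s=1,2,3$ (verified directly from the explicit $Q$-decomposition recorded in Remark~\ref{la phi0}). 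Substituting yields the stated scalar curvature. The main obstacle is the careful bookkeeping of the bilinear cross-products contributing to $\s$, especially the $\SU$-specific interactions between $\varphi_0$ and the $\varphi_s$'s and the cancellations producing the square $(a-\mathrm{tr}(B))^2$.
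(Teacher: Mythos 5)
Your proposal is correct and follows essentially the same route as the paper: expand $\s$ bilinearly using the explicit values of $T^o$, $T^{rs}$, $T^{r0}$ on the vertical/horizontal blocks, deduce the vanishing of $\s\vert_{Q\times Q^\perp}$ from the fact that each torsion value lands in the ``wrong'' distribution, obtain $2\delta_{ik}(a-\tr(B))^2$ from $T(\xi_i,\xi_{i+1})=(a-\tr(B))\xi_{i+2}$, and then get ii) and iii) from \eqref{formulicas}, Kashiwada's identity and a $g$-trace in which $\tr(\varphi_0\varphi_s\vert_Q)=0$ kills the cross-term. One small caveat: the auxiliary identity $\sum_j\Phi_s(e_j,X)\Phi_t(e_j,Y)=\delta_{st}g(X,Y)$ is not true for $s\neq t$ (the left-hand side equals $g(\varphi_sX,\varphi_tY)=-\Phi_{s\times t}(X,Y)$); your conclusion nevertheless stands because the coefficient $\sum_r b_{rs}b_{rt}$ multiplying it is symmetric in $(s,t)$, so only the symmetrized part $\delta_{st}g(X,Y)$ survives --- this is exactly the mechanism the paper invokes via $\varphi_s\varphi_t+\varphi_t\varphi_s=-2\delta_{st}\,\mathrm{id}_Q$, and you should make that symmetrization explicit rather than assert the pointwise identity.
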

\begin{proof}
Denote by $T=a T^o +\sum_{r,s=1}^3 b_{rs} T^{rs}+\sum_{l=1}^3c_lT^{l0}$ the torsion of the connection $\nabla$, but put $c_l=0$ and $\varphi_0=0$ when $G\neq \SU(m)$. Recall that  
   the vertical distribution  is preserved by any $\varphi_s$, $s=0,\dots,3$. From Lemma~\ref{bases}, it is easy to check that 
\begin{equation}\label{eq_torsiones}
\begin{array}{ll}
T^o(X,Y)=0,\qquad&  T^{rs}(X,Y) =\Phi_s(X,Y)\xi_r,        \\
  T^o(X,\xi_j)=0,&   T^{rs}(X,\xi_j)= \delta_{rj}\varphi_sX,       \\
  T^o(\xi_i,\xi_{i+1})=\xi_{i+2},\qquad&   T^{rs}(\xi_i,\xi_{i+1})=-\delta_{rs}\xi_{i+2},   \\
\end{array}
\end{equation}
for any $X,Y\in\mathfrak{m}$   horizontal vectors,  $r=1,2,3$, $s=0,1,2,3$ (so $\delta_{r0}=0$). In particular, 
$T(X,Y)$ is vertical, 
$T(X,\xi_j)$ is horizontal, 
and $T(\xi_i,\xi_j)$ is vertical.

Then, by taking $\{\xi_1,\xi_2,\xi_3,V_1,\ldots,V_{4n}\}$ an orthonormal basis  of $\mathfrak{m}$,
it holds 
\[ \s(X,\xi_k) = \sum_{j=1}^3 g(T(X,\xi_j),T(\xi_k,\xi_j)) + \sum_{j=1}^{4n} g(T(X,V_j),T(\xi_k,V_j))=0. 
\]
For the sake of simplicity,  we write $b_{r0}=c_r$. Thus, we have 
$$ T(X,\xi_j)=\sum_{s=0}^3 b_{js}\varphi_sX,\quad
T(X,Y)=\sum_{r=1}^3\left[ \sum_{s=0}^3 b_{rs}g(X,\varphi_sY) \right]\xi_r.
$$
According to the definition of the tensor $\s$ in \eqref{eq_defS},
\begin{align*}
 &\ \ \s(X,Y)= \sum_{r=1}^3 g(T(X,\xi_r),T(Y,\xi_r)) + \sum_{j=1}^{4n} g(T(X,V_j),T(Y,V_j)) \nonumber \\
& \hspace{-3pt}= \sum_{r=1}^3\sum_{s,u=0}^3  g\left( b_{rs}\varphi_sX , b_{ru}\varphi_uY \right)    + \sum_{j=1}^{4n}\sum_{r,t=1}^3 g\left(  \sum_{s=0}^3b_{rs} g(X,\varphi_sV_j) \xi_r,
\sum_{u=0}^3 b_{tu}g(Y,\varphi_uV_j) \xi_t\right)\hspace{-1pt}. \nonumber
\end{align*} 
Taking into account that $\varphi_s\in\sof(\mm,g)$ and that   $g(X,Y)= \sum_{j=1}^{4n}  g( X,V_j)g( Y,V_j)$ holds for horizontal vectors, then both summands of the above formula are equal and then,  
\begin{align*}
\s(X,Y)&= 2\left[\sum_{s,r,u=1}^3 b_{rs}b_{ru} g(\varphi_sX,\varphi_u Y)
+ \sum_{s,r=1}^3 b_{rs}c_r\left( g(\varphi_sX,\varphi_0Y)+g(\varphi_0X,\varphi_sY) \right) \right] \\
&\ +2\|c\|^2 g(X,Y) .
\end{align*}
If we recall that $\varphi_s\varphi_u+\varphi_u\varphi_s=-2\delta_{su}\textrm{id}_{ Q}$ for $s,u =1,2,3$ and $\varphi_0\varphi_s=\varphi_s\varphi_0$, we get
$$
\s(X,Y)  =2(\|B\|^2+\|c\|^2)g(X,Y)+4\sum_{s,j=1}^3 b_{js}c_jg(\varphi_0X,\varphi_sY).  
$$

For the next steps, we recall from Section \ref{su} that 
$\varphi_0\varphi_1(z,w)=\varphi_0(\ii z,\ii  w)=(-z,w)$,  $\varphi_0\varphi_2(z,w)=\varphi_0(-\bar{w},\bar{z})=(-\mathbf{i}\bar{w},-\mathbf{i}\bar{z})$, $\varphi_0\varphi_3(z,w)=\varphi_0(-\mathbf{i}\bar{w},\mathbf{i}\bar{z})=(\bar{w},\bar{z})$, and also $\varphi_0\vert_{\mm_3}=0$. From the point of view of real linear spaces, the eigenvalues of $\varphi_0\varphi_1\vert_{\mm_1\oplus\mm_2}$ are $ 1,-1$, with equal number of $+1$ and $-1$. Then, $\mathrm{tr}(\varphi_0\varphi_1)=0$. Also, if we take an orthonormal $\R$-basis of $\mm_1\oplus\mm_2$ of the form $((e_1,0),\ldots,(e_n,0)$, $(\ii  e_1,0)$, $\ldots$, $(\ii  e_n,0)$, $(0,e_1)$, $\ldots,(0,e_n),(0,\ii  e_1),\ldots,(0,\ii  e_n))$, a simple computation shows that  $\mathrm{tr}(\varphi_0\varphi_2)=\mathrm{tr}(\varphi_0\varphi_3)=0$. 

We will compute  $S(\xi_i,\xi_k)$ in two steps. On one hand, 
\begin{align*}
\sum_{j=1}^{4n} &g(T(\xi_i,V_j),T(\xi_k,V_j)) 
= \sum_{j=1}^{4n}\left[ \sum_{s,u=0}^3 b_{is}b_{ku} g(\varphi_sV_j,\varphi_uV_j)\right]\\
& 
= \sum_{j=1}^{4n} \left[\sum_{s,u=1}^3 b_{is}b_{ku}\delta_{su}
+c_k \sum_{s=1}^3 b_{is}g(\varphi_sV_j,\varphi_0V_j)+ c_i \sum_{u=1}^3 b_{ku}g(\varphi_uV_j,\varphi_0V_j)
+ c_ic_k \right]  \\
&= 4n\sum_{s=1}^3 b_{is}b_{ks}-\mathrm{tr}\Big( \sum_s^3(c_kb_{is}+c_ib_{ks}) \varphi_s \varphi_0\Big)
+4nc_ic_k  = 4n\sum_{s=1}^3 b_{is}b_{ks}+4nc_ic_k.
\end{align*}
On the other hand, $T(\xi_i,\xi_{i+1})=-T(\xi_{i+1},\xi_{i})= \big(a-\sum_{r=1}^3{b_{rr}}\big)\xi_{i+2}.$ Then, it holds 
$$
\sum_{j=1}^3g(T(\xi_i,\xi_j),T(\xi_k,\xi_j))=\delta_{ik}( \| T(\xi_i,\xi_{i+1}) \|^2+ \| T(\xi_i,\xi_{i+2}) \|^2)=2\delta_{ik}(a-\mathrm{tr}(B))^2,
$$
which gives the required expression for   $\s(\xi_i,\xi_k)$.

Next, taking into account that every $3$-Sasakian manifold is Einstein (in the usual sense), 
 Eq.~(\ref{formulicas}) can be claimed to obtain the expression for the symmetric part of the Ricci tensor. Finally, the scalar curvature becomes
\begin{align*}
 & s^{\nabla}
=\sum_{k=1}^{4n+3}\mathrm{Sym(Ric}^{\nabla})(e_{k},e_{k})\\
& = 3\left[ 4n+2-\frac12\left(a-\mathrm{tr}(B)\right)^2\right]-n\|B\|^2-n\|c\|^2 +\left(4n+2-\frac12\left(\|B\|^2+\|c\|^2\right)\right)4n\\ &\quad +\sum_{s,j}b_{js}c_j\mathrm{tr}(\varphi_0\varphi_s) 
=(4n+2)(4n+3) -	\frac32(a-\mathrm{tr}(B))^2-3n\left(\|B\|^2+\|c\|^2\right).
\end{align*}
\end{proof} 

With this information, we can characterize which of our $3$-Sasakian homogeneous manifolds admit invariant Einstein with skew  torsion connections, 
and describe such connections $\nabla$ in terms of the coefficients $a\in\R$, $B=(b_{rs})\in \mathcal{M}_{3}(\R)$ and $c\in \R^3$ in (\ref{eq_todasskewtorsion2}). 

On one hand, as already stated, any 3-Sasakian manifold $(M^{4n+3},g)$ is Einstein with $\mathrm{Ric}^g=2(2n+1)g$. 
Therefore, by \eqref{formulicas}, the symmetrized of $\mathrm{Ric}^{\nabla}$  is a multiple of the metric $g$  if, and only if, the corresponding tensor  $\s$ is so. On the other hand, the scalar curvature $s^\nabla$ is a constant for every affine connection in (\ref{eq_todasskewtorsion2}). Hence, given $\nabla$ an invariant  Einstein with skew  torsion connection on the homogeneous $3$-Sasakian manifold $M=G/H$, there should exist $\lambda\in\R$ such that $\s=\lambda g$. Now, the identities    \eqref{einstein} and (\ref{formulicas})  imply that the tensor $\s$ corresponding with  such connection $\nabla$ satisfies
$$
\s=4\Big(2(2n+1)-\frac{ s^{\nabla}}{4n+3}\Big) g.
$$
By substituting here the expression of $s^\nabla$ provided by  Proposition~\ref{prop_Sys}\,iii)  (keep in mind that for convenience we are assuming $c=0$ whenever $G\neq \mathrm{SU}(m)$), we get 
\begin{equation}\label{unlanda}
\lambda=\frac{6\left( a-\mathrm{tr}(B)\right)^2 + 12n (\|B\|^2+\|c\|^2)}{4n+3}.
\end{equation}
A direct computation with the expression of $\s$ provided by  Proposition~\ref{prop_Sys}\,i) gives
\begin{equation}\label{treslanda}
3\lambda= \sum _{i=1}^3\s(\xi_{i},\xi_{i})=6\left( a-\mathrm{tr}(B)\right)^2  + 4n(\|B\|^2+\|c\|^2).
\end{equation}
From (\ref{unlanda}) and (\ref{treslanda}), we deduce that  every  Einstein with skew  torsion  invariant affine connection  $\nabla$ on the homogeneous $3$-Sasakian manifold $M^{4n+3}=G/H$ satisfies
\begin{equation}\label{eq_paradem1}
3\left( a-\mathrm{tr}(B)\right)^2  + (2n-3)(\|B\|^2+\|c\|^2)=0.
\end{equation}
This forces either $a=0$, $B=0$, $c=0$ (Levi-Civita connection) or $2n-3\le0$, so that $n=1$, in other words,  $\dim M=7$.

Now, we assume $\dim M=7$ and $3\left( a-\mathrm{tr}(B)\right)^2 =\|B\|^2+\|c\|^2$. By Proposition~\ref{prop_Sys}\, iii), 
$\mathrm{Sym}(\mathrm{Ric}^{\nabla})=\left(6- \frac{1}{2}(\|B\|^2+\|c\|^2)\right)g.$ 
Next, we use the formula of $ \mathrm{Sym}(\mathrm{Ric}^{\nabla})$ in Proposition~\ref{prop_Sys}\,ii), where we insert $(\xi_i,\xi_k)$, obtaining 
$\sum_sb_{is}b_{ks}+c_ic_k=\frac13(\|B\|^2+\|c\|^2)\delta_{ik}$, so that    
\begin{equation}\label{eq_paradem2}
BB^t+cc^t=\frac13(\|B\|^2+\|c\|^2) I_3.
\end{equation}
By inserting $X,Y$ horizontal, we get $c^tB=0$. Conversely, these   conditions (that is, $n=1$, \eqref{eq_paradem1}, \eqref{eq_paradem2} and $c^tB=0$) imply  that $\mathrm{Sym}(\mathrm{Ric}^{\nabla})=\frac{s^\nabla}{7} g$, as a direct application of Proposition~\ref{prop_Sys}\,ii).  We summarize these arguments  in the following theorem.

\begin{theorem}\label{dimn=1} 
Let $M=G/H$ be a 3-Sasakian homogeneous manifold of dimension at least $7$. 
\begin{enumerate}
\item[i)] If $M$ admits a nontrivial $G$-invariant affine Einstein with skew  torsion  connection, then  $\dim M=7$, that is, either $M=\Sp(2)/\Sp(1)\cong \mathbb{S}^7$, or   
 $M=\Sp(2)/(\Sp(1)\times\mathbb{Z}_2)\cong \R P^7$,  or  
$M=\SU(3)/S(\mathrm U(1)\times U(1))=\SU(3)/\mathrm U(1)$
is the Aloff-Wallach space $\mathfrak W_{1,1}^7$. 

\item[ii)] There is a bijective correspondence between the Lie group $\mathbb Z_2\times  {\mathrm{CO}}(3)$  and the set of nontrivial $\Sp(2)$-invariant affine Einstein with skew  torsion  connections on $M= \mathbb{S}^7$, given by 
\begin{equation}\label{eq_sonvalidas}
 (\pm1,B=(b_{rs}))\mapsto \nabla^g+ \frac12\left[\left(   \mathrm{tr}(B)\pm\sqrt{\frac{\|  B \|^2 }3}\right)T^o+ \sum_{r,s=1}^3 b_{rs}T^{rs}\right]. 
\end{equation} 
In addition, $s^\nabla= {42-\frac72\|  B \|^2}$. This also holds for the projective space $M=\R P^7$.
 
\item[iii)]  There  is a bijective correspondence between   the set of nontrivial   $\SU(3)$-invariant 
Einstein with skew  torsion affine connections  on  $M=\mathfrak W_{1,1}^7$ and the set 
\[\mathbb Z_2\times  \{(c,B ) \in\R^3\times\mathcal{M}_{3}(\R):  B  B^t+cc^t\in\R I_3,\,c^tB=0\},\] given  by   
\begin{equation}\label{eq_sonvalidas2}
 \big(\pm1,c=(c_l), B=(b_{rs})\big)\mapsto \nabla^g+ 
 \frac12 a_{\pm}T^o+\frac12 \sum_{r,s }  b_{rs}T^{rs}+ \frac12\sum_{l }  c_{l}T^{l0}, 
\end{equation}
where $a_{\pm}=\tr(B)\pm\sqrt{\frac{\| B \|^2+\| c \|^2 }3}$. Moreover, $s^\nabla=  {42-\frac72(\|  B \|^2+\| c \|^2)}$.
\end{enumerate}
\end{theorem}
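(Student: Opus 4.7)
My plan is to combine Corollary~\ref{co_lasskew}, which parametrizes any $G$-invariant metric connection with skew torsion by a triple $(a,B,c)$ (with $c=0$ whenever $G\ne\SU(m)$), with Proposition~\ref{prop_Sys}, which has already computed $\mathrm{Sym}(\mathrm{Ric}^\nabla)$, $\s$, and $s^\nabla$ explicitly in terms of $(a,B,c)$. Because $\mathrm{Ric}^g=2(2n+1)g$ and $\mathrm{Sym}(\mathrm{Ric}^\nabla)=\mathrm{Ric}^g-\tfrac14\s$ by \eqref{formulicas}, condition \eqref{einstein} will reduce to demanding $\s=\lambda g$ for some constant $\lambda$. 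Taking the trace would give
$$(4n+3)\lambda=4\big((4n+3)(4n+2)-s^\nabla\big)=6(a-\tr B)^2+12n(\|B\|^2+\|c\|^2),$$
while computing $\sum_{i=1}^3 \s(\xi_i,\xi_i)$ from Proposition~\ref{prop_Sys}(i) would yield $3\lambda=6(a-\tr B)^2+4n(\|B\|^2+\|c\|^2)$. Equating these two expressions should force
$$3(a-\tr B)^2+(2n-3)(\|B\|^2+\|c\|^2)=0,$$
so for $n\ge 2$ only the Levi-Civita connection survives, establishing item~(i).

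For the remaining case $n=1$, I would then impose $\s=\lambda g$ on the vertical block. Using the identity above to eliminate $(a-\tr B)^2$ reduces this to the clean matrix condition
$$BB^t+cc^t=\tfrac13\big(\|B\|^2+\|c\|^2\big)I_3.$$
On the horizontal block, the extra term $4\sum_{s,j=1}^3 b_{js}c_j\,g(\varphi_0 X,\varphi_s Y)$ in $\s(X,Y)$ must vanish identically. Since the endomorphisms $\varphi_0\varphi_1,\varphi_0\varphi_2,\varphi_0\varphi_3$ are linearly independent on the horizontal distribution $Q$ in the $\SU(3)$ model (by Remark~\ref{la phi0} and \eqref{eq_accionesficasoU_0}), this would be equivalent to $c^tB=0$; in the remaining cases $c=0$ and the condition is automatic. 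Finally, \eqref{eq_paradem1} should determine $a$ up to the sign $\pm\sqrt{(\|B\|^2+\|c\|^2)/3}$, giving the $\mathbb{Z}_2$ factor.

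To finish items (ii) and (iii), I would identify the parameter sets. For $\mathbb{S}^7$ and $\mathbb{R}P^7$, where $c=0$, the condition $BB^t=\tfrac13\|B\|^2 I_3$ characterizes exactly the nonzero conformal linear maps of $\mathbb{R}^3$, giving the identification with $\mathbb{Z}_2\times\mathrm{CO}(3)$. For $\mathfrak{W}^7_{1,1}$, the parameter space is the larger subset of $\mathbb{R}^3\times\mathcal{M}_3(\mathbb{R})$ cut out by $BB^t+cc^t\in\mathbb{R}I_3$ and $c^tB=0$. The stated scalar curvatures then follow by plugging $3(a-\tr B)^2=\|B\|^2+\|c\|^2$ back into Proposition~\ref{prop_Sys}(iii). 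The main obstacle I foresee is the sufficiency direction (that every such triple does produce an Einstein with skew torsion connection), but this should reduce to a direct substitution in Proposition~\ref{prop_Sys}(ii); the only subtlety is the elimination of the $\varphi_0$ cross terms in the Aloff-Wallach case, which hinges on the explicit action of $\varphi_0$ described in Section~\ref{su}.
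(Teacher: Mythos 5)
Your proposal is correct and follows essentially the same route as the paper: reduce the Einstein condition to $\s=\lambda g$ via \eqref{formulicas}, compare the trace of $\s$ with $\sum_i\s(\xi_i,\xi_i)$ to obtain $3(a-\tr B)^2+(2n-3)(\|B\|^2+\|c\|^2)=0$, and then read off the vertical and horizontal blocks of $\s$ in the case $n=1$ to get $BB^t+cc^t\in\R I_3$ and $c^tB=0$. The only cosmetic difference is that you make explicit the linear-independence argument for the $\varphi_0\varphi_s$ cross terms, which the paper leaves implicit.
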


 \begin{remark}
\rm{A relevant fact is that,  for an arbitrary 3-Sasakian manifold of dimension   $7$, any affine connection as in \eqref{eq_sonvalidas} is Einstein with skew  torsion.    Thus, the homogeneity is only required to assure that  the list is complete.}
 \end{remark}
 
In Theorem~\ref{dimn=1}, among the Einstein with skew  torsion invariant affine connections, the scalar curvature $s^\nabla$ equals zero if, and only if,  
$\|  B \|^2+\| c \|^2=12$.  Also, $ \mathrm{Sym}(\mathrm{Ric}^{\nabla})$ is positive definite (resp. negative definite) when $\|  B \|^2+\| c \|^2<12$ (resp. $>$). We are including here the cases   $\mathbb S^7$ and $\R P^7$, with $c=0$.   

Note that the set of Einstein with skew  torsion invariant connections in the Aloff-Wallach space $\mathfrak W_{1,1}^7$  contains strictly   $\mathbb Z_2\times\mathrm{CO}(3)$. For instance, we can take the  nonzero vector $c=(1,0,0)^t$ and the  noninvertible matrix $  B=\tiny\begin{pmatrix} 0&0&0\\ \cos\theta&-\sin\theta&0\\ -\sin\theta&-\cos\theta&0 \end{pmatrix}$.  Indeed, $c^tB=0$ and $BB^t+cc^t=I_3$. 

\begin{remark}\label{re_accion}
\normalfont Formulae \eqref{eq_sonvalidas} and \eqref{eq_sonvalidas2} are given in terms of an  arbitrary choice of three Sasakian compatible structures $\{\xi_{i} ,\eta_{i}, \varphi_{i}\}_{i=1,2,3}$ in ${\estS}= \{\xi_{\tau} ,\eta_{\tau}, \varphi_{\tau}\}_{\tau\in\mathbb S^2} $. {Given} a second choice $\{\xi'_{i} ,\eta'_{i}, \varphi'_{i}\}_{i=1,2,3}$ in ${\estS}$ such that $g(\xi'_{i},\xi'_{j})=\delta_{ij}$ and $[\xi'_{i},\xi'_{j}]=2 \epsilon_{ijk}\xi'_{k}$, the first condition is equivalent to the fact that the change of basis given by $\xi'_i=\sum_jp_{ij}\xi_j$ is an orthogonal transformation, that is, $P=(p_{ij})\in\mathrm {O}(3)$. The second condition is equivalent to $\det(P)=1$. Thus  a 3-Sasakian structure determines implicitly an orientation on the  vertical distribution,   and  we have a natural action of the special orthogonal group $\mathrm {SO}(3)$ on the set of compatible triplets of Sasakian structures.

If we  write a torsion  
$T= a T^o +\sum_{r,s=1}^3 b_{rs} T^{rs}+\sum_{l=1}^3c_lT^{l0}= a' T'^o+\sum_{r,s=1}^3 b'_{rs} T'^{rs}$ $+\sum_{l=1}^3c'_lT'^{l0}$, by a straightforward computation, we see that
$$
a'=\det(P)a=a,\qquad   B'=PBP^{-1},\qquad c'=Pc.
$$
This implies that the sets of Einstein with skew  torsion affine connections on $\mathbb S^7$ ($\R P^7$) and $\mathfrak W_{1,1}^7$ in Theorem~\ref{dimn=1} should be invariant under the actions 
$P\cdot(s,B)=( s, PBP^{-1})\in\mathbb Z_2\times \mathrm {CO}(3)$ and $P\cdot(s,c,B)=( s, Pc, PBP^{-1})$, respectively. This holds trivially, but we point it out because it   gives a clue to understand the great amount of connections in \eqref{eq_sonvalidas} and \eqref{eq_sonvalidas2}, which is a little bit surprising initially. Other properties which will be invariant under the $\mathrm {SO}(3)$-action, that is, independent of the  choice of three Sasakian compatible structures, are  the symmetry of the Ricci tensor  (Section~\ref{sec_riccisimetrico}) and  the condition of having parallel skew  torsion (Section~\ref{sec_paralela}).
\end{remark}
 
\begin{remark}\label{G2} \normalfont 
Many reported connections   fit in the set described in Theorem~\ref{dimn=1}. We mention some of these examples to serve as a link with the existent literature.
\begin{itemize}
\item[i)]
Recall from  \cite{AgriFri2} the existence of metric connections with skew  torsion and parallel spinors in any 3-Sasakian manifold in dimension   $7$, whose holonomy group is a subgroup of $G_2$. There, the connections were 
identified via $G_2$-geometry. Observe now that there is a possible choice of such connections in the family \eqref{eq_sonvalidas}.  Indeed, Agricola and Friedrich found in  \cite[Theorem~9.1 and Proposition~9.1]{AgriFri2} a family of metric connections $\nabla^T$ with skew  torsion $T=-\frac{1}{6}T^0+\sum_{r,s=1}^{3}b_{rs}T^{rs}$ which admit parallel spinors when the matrix $B$ satisfies 
$$ B=\rho\begin{pmatrix}
 a^2-b^2-c^2+d^2&2(ab+cd)&2(ac-bd)\\
 2(ab-cd)&-a^2+b^2-c^2+d^2&2(bc+ad)\\
 2(ac+bd)&2(bc-ad)&-a^2-b^2+c^2+d^2
 \end{pmatrix},
$$
$\rho=\frac{1}{6(a^2+b^2+c^2+d^2)}$, 
for certain $a,b,c,d$. A direct computation shows that $B\in  {\mathrm{CO}}(3)$ with $\|  B \|^2=1/12$.
Hence $ \tr(B)\pm\frac16=-\frac 16$ has solution if either $a^2+b^2+c^2=3d^2$ or $3(a^2+b^2+c^2)=d^2$.
Therefore, from Theorem \ref{dimn=1} and  \cite[Theorem~9.1 and Proposition~9.1]{AgriFri2}, every $3$-Sasakian manifold $M$ of dimension $7$ possesses Einstein with skew  torsion affine connections with parallel spinors.

\item[ii)] On the other hand,  a two-parametric family of metrics on the Aloff-Wallach space $\mathfrak W_{1,1}^7$ is introduced in \cite[Section 8]{AgriFri2}. The particular case $s=1$ and $y=2$ is just the $3$-Sasakian metric on  $\mathfrak W_{1,1}^7$. The seminal paper \cite{AgriFerr} includes some examples  of Einstein  with skew  torsion affine connections for the $3$-Sasakian metric on $\mathfrak W_{1,1}^7$. They are denoted $\nabla^3, \nabla^4$ and $\nabla^6$   in \cite[Section~8]{AgriFri2} with $s=1$ and $y=2$. Following the notation of  Theorem~\ref{dimn=1}, it is not difficult to check that these Einstein  with skew  torsion affine connections correspond to 
$$\begin{array}{ccccc}
\Big(-1, & c=0,& B=\frac{1}{6}\begin{pmatrix}
 0&-1& 0\\
 1& 0&0\\
 0&0&-1
 \end{pmatrix}\Big)&\mapsto& \nabla^3,\\
 \Big(-1, & c=0,& B=\frac{1}{6}\begin{pmatrix}
 0&1& 0\\
 -1& 0&0\\
 0&0&-1
 \end{pmatrix}\Big)&\mapsto& \nabla^4,\\
\Big(-1, & c=0,& B=\frac{1}{6}\begin{pmatrix}
 0&1& 0\\
 -1& 0&0\\
 0&0&1
 \end{pmatrix}\Big)&\mapsto& \nabla^6.
\end{array}$$
In particular, all three scalar curvatures   of $\nabla^3, \nabla^4$, and $\nabla^6$ are  equal to $s^{\nabla}=7\left(6-\frac{1}{24}\right).$ 
 
\item[iii)] Einstein with skew  torsion invariant connections on $\mathbb{S}^7$ have been studied  in \cite{griego} and \cite{DraperGarvinPalomo}, where the sphere has been regarded as $\mathbb{S}^7\cong \mathrm{Spin}(7)/G_2$ and $\mathbb{S}^7\cong \SU(4)/\SU(3)$, respectively. Since $\Sp(2)\subset \SU(4)\subset \mathrm{Spin}(7)$, those connections should appear in our list.
According to these works, the torsion of any nontrivial $\SU(4)$-invariant Einstein with skew  torsion connection\footnote{Be aware that the  expression of $\omega_{_{\nabla}}$ on the preliminar version arXiv:1503.08401v1  have a wrong coefficient $\frac12$, corrected in the   version \cite{DraperGarvinPalomo}.} 
is
$$
\omega_{_{\nabla}}=  r\, \eta_{1}\wedge d\eta_{1}+ \mathrm{Re}(q) \left( \eta_{2}\wedge d\eta_{2}- \eta_{3}\wedge d\eta_{3}\right)- \mathrm{Im}(q) \left(\eta_{2}\wedge d\eta_{3}+\eta_{3}\wedge d\eta_{2}\right),
$$
for $0\ne r\in\R$ and $q\in\mathbb C$ such that  $|q|^2=r^2$.
 Moreover, it is $\mathrm{Spin}(7)$-invariant if, and only if,  $q=-r$. That is, there is $\theta\in[0,2\pi)$ ($q=re^{\mathbf{i}\theta}$) such that they correspond to  the following pairs $(s,B)$ in $\mathbb Z_2\times \mathrm{CO}(3)$:
\begin{equation}\label{eq_lasSU4inv}
\Big( s=-1,\, B=2r\left(\begin{array}{ccc}
1&0&0\\0&\cos\theta&-\sin\theta\\0&-\sin\theta&-\cos\theta
\end{array}\right)\Big)
\ \textrm{ and }\ 
\Big(s=-1,\, B=2r\left(\begin{array}{ccc}
1&0&0\\0&-1&0\\0&0&1
\end{array}\right)\Big)
\end{equation}
respectively. Bearing in mind Remark~\ref{re_accion} about the special orthogonal group and its action on the set of Einstein with skew  torsion $\mathrm{Sp}(2)$-invariant affine connections, this means that any $\SU(4)=\mathrm{Spin}(6)$-invariant Einstein connection on $\mathbb{S}^7$ is $\mathrm{Spin}(7)$-invariant   for a convenient choice of $\{\xi_i,\eta_i,\varphi_i\}_{i=1}^3$.

\item[iv)]  Also, the canonical $G_2$-structure on $\mathbb{S}^7$ studied in \cite{AgriFri} is given by
\begin{equation}\label{eq_laG2est}
\omega_{_{G_2}}=\frac12  \left( \eta_{1}\wedge d\eta_{1}+   \eta_{2}\wedge d\eta_{2}+ \eta_{3}\wedge d\eta_{3}\right)+4  \eta_{1} \wedge \eta_{2}\wedge \eta_{3},
\end{equation}
so that $\omega_{_{G_2}}$ corresponds to $(s,B)=(+1,I_3)$ in \eqref{eq_sonvalidas}.
In particular, the affine connection with torsion $\omega_{_{G_2}}$ is Einstein with skew  torsion.

On the other hand, there exists a unique affine connection with skew  torsion  preserving such $G_2$-structure (the  {characteristic connection} of the $G_2$-structure) whose associated torsion 3-form is given by
\begin{equation*} 
\eta_{1}\wedge d\eta_{1}+   \eta_{2}\wedge d\eta_{2}+ \eta_{3}\wedge d\eta_{3} .
\end{equation*}
The corresponding affine connection   was generalized to arbitrary dimension in \cite{AgriDileo}, where is called \emph{the canonical connection of the 3-Sasakian manifold} (see Remark~\ref{re_canonica}).  We will denote it by $\nabla^{c}$. 
As it  does not correspond to any choice of $(s,B)$ in Theorem~\ref{dimn=1}, then  $\nabla^{c}$  is not an Einstein with skew  torsion affine connection. In spite of this fact, $\nabla^{c}$ plays a special role in any 3-Sasakian manifold (arbitrary dimension), as  Theorem~\ref{th_paralela} shows. 
\end{itemize}
\end{remark}  


We think  that further study of the     orbits under the action given in Remark~\ref{re_accion}, different from  those ones in \eqref{eq_lasSU4inv}, could be convenient. In fact,  their representatives could behave rather differently. If we denote by $R_\theta$ the rotation $R_\theta=\tiny\begin{pmatrix} \cos\theta&-\sin\theta\\\sin\theta&\cos\theta\end{pmatrix}$, 
the orbits of $\mathbb Z_2\times\mathrm{CO}(3)$ under the $\mathrm{SO}(3)$-action are precisely
$$\tiny\left\{(s,r\begin{pmatrix}1&0\\0&R_\theta\end{pmatrix}),(s,r\begin{pmatrix}-1&0\\0&R_\theta\end{pmatrix}):s={\pm1},r\in\R,\, r> 0,\theta\in[0,2\pi) \right\},
$$
where $r$ has been taken positive to avoid duplicity of the orbits. The orbits which have already appeared 
 in
  \eqref{eq_lasSU4inv} are $\tiny(-1, r\begin{pmatrix}-1&0\\0&R_{0}\end{pmatrix})$ and $\tiny(-1, r\begin{pmatrix}1&0\\0&R_{\pi}\end{pmatrix})$,   and, as mentioned,  $\tiny(1, \begin{pmatrix}1&0\\0&R_{0}\end{pmatrix})$ in \eqref{eq_laG2est}.

Coming back to the Aloff-Wallach space and its extra  Einstein  with skew  torsion invariant  connections, that is, those ones with $c\ne0$,  we can assume that $c=(r, 0, 0)^t$, $r>0$, due to the action of the special orthogonal group. 
Now, the first row of $B$ is equal to zero, and the two others can be any pair of orthogonal vectors in $\R^3$ of length $r$.
So $B$ is determined by an element in the Stiefel manifold $V_{3,2}$. 
 
\begin{remark}\normalfont 
Recall that there is a canonical way to associate a Lorentzian  metric with any Sasakian manifold, \cite[11.8.1]{galickiboyer}. If $(\xi,\eta,\varphi)$ is a Sasakian structure on a Riemannian manifold $(M,g)$, define $g_L=g-2\eta\otimes\eta$. This metric was considered on the sphere $\mathbb S^{2n+1}$ in \cite{casoLorentz}, where it was proved that there exist $\SU(n+1)$-invariant connections which are Einstein with skew  torsion  only for dimensions $3$ and $7$. According to \cite{casoLorentz} (respectively \cite{DraperGarvinPalomo}), the set of $\SU(4)=\mathrm{Spin}(6)$-invariant connections which are Einstein with skew  torsion on $(\mathbb S^7,g_L)$ (resp. $(\mathbb S^7,g)$) is parametrized by an ellipsoid (resp. by a cone). Obviously, the set of $\Sp(2)$-invariant connections must contain the set of $\SU(4)$-invariant connections, so that it arises the natural question of finding the whole  set,
in a similar way to Theorem~\ref{dimn=1}. It is not difficult to prove that the set of  $\Sp(2)$-invariant Einstein  with skew  torsion connections  on  $(\mathbb S^7,g_L)$  is parametrized by the group $\mathbb Z_2\times \mathrm{CO}(2,1)$, where  
 \[ \mathrm{CO}(2,1)=\{A\in \mathcal M_{3 }(\mathbb{R}) : AHA^t\in\mathbb{R} H, \ \det(A)\neq 0 \}, \qquad 
 H=\tiny\begin{pmatrix}-1&0\\0&I_2\end{pmatrix}.
\]
Here we are considering $g_L=g-2\eta_\tau\otimes\eta_\tau$ for any fixed $\tau\in\mathbb S^2$.
\end{remark}

\subsection{On the Symmetry of the Ricci Tensor}\label{sec_riccisimetrico} We study the symmetry of the Ricci tensors corresponding to our families of   affine connections with skew-symmetric torsion. In fact, from (\ref{formulicas}), the Ricci tensor of   $\nabla$ is symmetric just when the divergence of the torsion tensor of $\nabla$ vanishes.

\begin{proposition}\label{pr_div} The divergences of the torsion tensors defined in \eqref{def_torsiones} are:
$$\mathrm{div}(T^o)=0,\quad
\mathrm{div}(T^{rs})=\left\{\begin{array}{ll}
0&\textrm{ if   $s\in\{r,0\}$,}\\
2\Phi_{r+2} +(4n+2)\eta_r\wedge\eta_{r+1}&\textrm{ if   $s=r+1 $,}\\
-2\Phi_{r+1} +(4n+2)\eta_r\wedge\eta_{r+2}&\textrm{ if   $s=r+2 $.}
\end{array}
\right.
$$
In particular,  the tensor $T=aT^o+\sum_{r,s=1}^{3} b_{rs}T^{rs}+\sum_rc_rT^{r0}$ has zero divergence  if, and only if,  $B=(b_{rs})$ is a symmetric matrix.
\end{proposition}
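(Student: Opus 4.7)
The strategy is to compute the covariant derivative of each invariant $3$-form by Leibniz, then sum over an orthonormal frame adapted to the splitting $T_oM=Q^{\perp}\oplus Q$ to obtain the divergence at the base point $o$, invoking $G$-invariance to extend to all of $M$. Two identities drive the calculation. First, $\nabla^g \eta_i = \Phi_i$ (viewed as a $(0,2)$-tensor), which follows at once from $\varphi_i=-\nabla^g\xi_i$, metric compatibility and the skew-symmetry of $\varphi_i$. Second, combining the Sasakian identity $(\nabla^g_X\varphi_s)(Y)=g(X,Y)\xi_s-\eta_s(Y)X$ with $\nabla^g g=0$ gives
\[
(\nabla^g_X\Phi_s)(Y,Z)=g(X,Z)\,\eta_s(Y)-g(X,Y)\,\eta_s(Z).
\]

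I would first expand the covariant derivatives via the product rule, for instance $\nabla^g_X(\eta_r\wedge\Phi_s)=(\iota_X\Phi_r)\wedge\Phi_s+\eta_r\wedge(\nabla^g_X\Phi_s)$, and then, working at $o$ in a frame $\{\xi_1,\xi_2,\xi_3,V_1,\dots,V_{4n}\}$ with $V_j$ horizontal, reduce the sum $\sum_i(\nabla^g_{e_i}\omega)(X,Y,e_i)$ to a finite list of trace computations on $Q$. The relevant trace facts are $\sum_j g(\varphi_sV_j,\varphi_tV_j)=4n\,\delta_{st}$ (a consequence of $\varphi_s\varphi_t+\varphi_t\varphi_s=-2\delta_{st}\,\mathrm{id}_Q$) and $\mathrm{tr}(\varphi_s\vert_Q)=0$. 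For $T^{r0}$, the extra inputs are $[\varphi_0,\varphi_s]=0$ and the vanishing traces $\mathrm{tr}(\varphi_0\vert_Q)=\mathrm{tr}(\varphi_0\varphi_s\vert_Q)=0$ (already recorded in the proof of Proposition~\ref{prop_Sys}), which together force $\mathrm{div}(T^{r0})=0$. The case $T^o$ is the cleanest and vanishes by cyclic cancellation once the basis sum is carried out. The nontrivial output then comes from the case distinction $s\in\{r,0\}$ vs.\ $s=r+1$ vs.\ $s=r+2$ in $T^{rs}$.

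For the \emph{in particular} statement I would collect the terms in $\sum_{r,s=1}^3 b_{rs}\,\mathrm{div}(T^{rs})$; after the index shift $r\mapsto r-1$ on the $s=r+2$ contributions, each cyclic index $r$ contributes precisely $(b_{r,r+1}-b_{r+1,r})\bigl[2\Phi_{r+2}+(4n+2)\eta_r\wedge\eta_{r+1}\bigr]$. The three $2$-forms $\Phi_1\vert_Q,\Phi_2\vert_Q,\Phi_3\vert_Q$ are linearly independent on the horizontal distribution (they are the K\"ahler forms of the three complex structures of the quaternionic structure on $Q$), so the vanishing of the total divergence is equivalent to $b_{r,r+1}=b_{r+1,r}$ for every $r$, i.e., to the symmetry of $B$; the converse is immediate from the computed individual divergences.

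The main obstacle is the bookkeeping in the cases $s=r+1$ and $s=r+2$: one has to split $\sum_i(\nabla^g_{e_i}\omega_{_\nabla})(X,Y,e_i)$ very carefully into its vertical and horizontal parts so as to extract simultaneously the horizontal contribution $\pm 2\Phi_{r\pm 1}$ (via the quaternionic trace identity on $Q$) and the vertical one $(4n+2)\eta_r\wedge\eta_{r\pm 1}$. The coefficient $4n+2$ arises as $4n$ (horizontal indices) plus $2$ (the two relevant vertical indices $\xi_j$, for which $(\nabla^g_{\xi_j}\Phi_s)(Y,\xi_j)$ brings in a $\xi_s$-term through the Sasakian identity).
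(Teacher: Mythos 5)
Your treatment of $\mathrm{div}(T^o)$ and of $\mathrm{div}(T^{rs})$ for $s=1,2,3$, and the deduction of the symmetry criterion from them, is correct and is in substance the same argument as the paper's: an orthonormal frame adapted to $Q^{\perp}\oplus Q$, the Sasakian identity for $\nabla^g\varphi_s$, the commutator relation $[\varphi_r,\varphi_{r+1}]=2\varphi_{r+2}-\eta_r\otimes\xi_{r+1}+\eta_{r+1}\otimes\xi_r$, and finally the linear independence of the $\Phi_r\vert_Q$. Whether one differentiates the $3$-forms $\eta_r\wedge\Phi_s$ by the Leibniz rule (as you do) or the equivalent $(1,2)$-tensors $T^{rs}$ term by term (as the paper does) is immaterial; both reduce to the same trace identities. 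Your accounting of the coefficient $4n+2$ is slightly off as a gloss (in the detailed bookkeeping it arises as $(4n+3)-1$ from the trace of the identity in the $\nabla^g\Phi_s$ piece, corrected by $-1$ from a cross term and $+1$ from the commutator), but this is cosmetic. The index shift in the ``in particular'' step and the appeal to the independence of $\Phi_1\vert_Q,\Phi_2\vert_Q,\Phi_3\vert_Q$ are exactly right.

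The one genuine gap is the case $T^{r0}$. Your Leibniz expansion produces the term $\sum_k\bigl(\eta_r\wedge(\nabla^g_{e_k}\Phi_0)\bigr)(X,Y,e_k)$, and $\nabla^g\Phi_0$ is not controlled by the facts you list. Unlike $\varphi_1,\varphi_2,\varphi_3$, the endomorphism $\varphi_0=\ad h\vert_{\mm}$ satisfies no Sasakian identity --- there is no ``$\xi_0$'' --- and it is not $\nabla^g$-parallel: for horizontal $X,Y$ one finds $(\nabla^g_X\varphi_0)Y=\tfrac12[X,\varphi_0Y]_{\spf(1)}=\sum_l g(\varphi_lX,\varphi_0Y)\,\xi_l$, which is nonzero in general (take $Y=\varphi_0\varphi_1X$). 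The commutativity $[\varphi_0,\varphi_s]=0$ and the vanishing traces $\tr(\varphi_0\vert_Q)=\tr(\varphi_0\varphi_s\vert_Q)=0$ dispose only of the $(\nabla^g\eta_r)\wedge\Phi_0$ piece and of some of the resulting sums; to conclude $\mathrm{div}(T^{r0})=0$ one must actually compute $\nabla^g\varphi_0$ in all vertical/horizontal configurations (it sends horizontal pairs to vertical vectors and vanishes when differentiating along $Q^\perp$), which requires the algebraic machinery of Lemma~\ref{le_comoderivar} together with the explicit $\alpha^g$ of Theorem~\ref{le_laestructura!} --- precisely where the paper's proof spends most of its effort. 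The conclusion is true and your scheme can be completed, but as written the $T^{r0}$ case is asserted rather than proved.
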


\begin{proof} Given a point $p\in M$, let us consider an orthonormal local frame $(e_1,\ldots,e_{4n+3})$  at $p$ such that $\big(\nabla^g_{e_i}e_j\big)_p=0$. For $X,Y\in \mathfrak{X}(M)$, recall that
\[ \mathrm{div}(T)(X,Y) = \sum_{k=1}^{4n+3} g\left(e_k, \left(\nabla^g_{e_k}T\right)(X,Y)\right). 
\]
Since this is a tensorial expression, we reduce the computation to
\[\mathrm{div}(T)_p(e_i,e_j)= \sum_{k=1}^{4n+3} g_p\left(e_k, \nabla^g_{e_k}T(e_i,e_j)\right). 
\]
We will not write down the point for the sake of simplicity. We start with the computation of $\mathrm{div}(T^{rs})$, for $r,s\in\{1,2,3\}$. First,
\begin{align*}
& \sum_k g\left(e_k,\nabla^g_{e_k}\left(\eta_r(e_j)\varphi_se_i\right)\right) =   \sum_k \left[ 
g\left(\nabla^g_{e_k}\xi_r,e_j\right) g\left(e_k,\varphi_se_i\right) 
+\eta_r(e_j)g\left(e_k,\left(\nabla^g_{e_k}\varphi_s\right)e_i\right) \right] \\
& = \sum_k \left[ 
-g\left(\varphi_re_k,e_j\right) g\left(e_k,\varphi_se_i\right) 
+\eta_r(e_j)g\big(e_k,g(e_k,e_i)\xi_s-\eta_s(e_i)e_k\big)
\right] \\
& = g(\varphi_re_j,\varphi_se_i) +\eta_r(e_j)\left(g(e_i,\xi_s)-(4n+3)\eta_s(e_i) \right)\\
& = 
g(\varphi_re_j,\varphi_se_i) -(4n+2)\eta_r(e_j)\eta_s(e_i). 
\end{align*}
Next, 
\begin{align*}
&  \sum_k g\left(e_k, \nabla^g_{e_k}\left(g(e_i,\varphi_se_j)\xi_r\right)\right)   \\
& =\sum_k\left[  g\left(e_i,\left(\nabla^g_{e_k}\varphi_s\right)e_j\right)g(e_k,\xi_r)
+ g(e_i,\varphi_se_j) g\left( e_k, \nabla^g_{e_k} \xi_r\right) \right] \\
& = g(e_i,  \big(\nabla_{\sum_kg(e_k,\xi_r)e_k }^g\varphi_s\big) e_j) 
+g(e_i,\varphi_se_j) \mathrm{div}(\xi_r)  =g(e_i,\big(\nabla_{ \xi_r  }^g\varphi_s\big) e_j)  \\
& =  \eta_s(e_i)\eta_r(e_j)-\eta_s(e_j)\eta_r(e_i) = \eta_s\wedge\eta_r(e_i,e_j).  
\end{align*}
By joining the information, we get 
\begin{align*}
& \mathrm{div}(T^{rs})(e_i,e_j) = \sum_k g\left(e_k,\nabla^g_{e_k}\left( -\eta_r(e_i)\varphi_se_j+\eta_r(e_j)\varphi_se_i+g(e_i,\varphi_se_j)\xi_r 
\right)\right) \\
& =  g(e_i,(\varphi_r\varphi_s-\varphi_s\varphi_r)e_j)   +(4n+1)\eta_r\wedge\eta_s(e_i,e_j). \end{align*}
From here,  it is trivial that
$ \mathrm{div}(T^{rr}) =0.$ 
Also, as $[\varphi_r,\varphi_{r+1}]=2\varphi_{r+2}-\eta_r\otimes\xi_{r+1}+\eta_{r+1}\otimes\xi_{r}$, then 
\begin{align*}
\mathrm{div}(T^{r\,r+1})(e_i,e_j) 
=  2\Phi_{r+2}(e_i,e_j)   + (4n+2)\eta_r\wedge \eta_{r+1}(e_i,e_j).
 \end{align*}
Analogously, we obtain the required expression for $\mathrm{div}(T^{r,r+2})$.

Next, we compute $\mathrm{div}(T^o)$ in some steps. As $T^o(X,U)=0$ for any $X,U\in  \mathfrak{X}(M)$ 
whenever $X$ is  horizontal, then $\mathrm{div}(T^o)(X,Y)=0$  when $X,Y$  are horizontal. Also, the compatibility with the metric implies 
$g\left(\xi_l,\nabla^g_{e_k}X \right)=-g\left(\nabla^g_{e_k}\xi_l,X \right) = g\left(\varphi_l e_k,X \right) = -g\left(\varphi_lX,e_k \right)  $, so that
 \begin{align*}  &  
 \mathrm{div}(T^o)(X,\xi_i) =  
-\sum_k g\left( e_k, T^o(\nabla^g_{e_k}X,\xi_i)\right) 
   = \sum_k g\left(T^o\left(e_k,\xi_i\right),\nabla^g_{e_k}X \right) \\
& = \sum_{k,l} g\left(T^o\left(e_k,\xi_i\right),\xi_l\right)g\left(\xi_l,\nabla^g_{e_k}X \right) 
= -\sum_{k,l} g\left(T^o\left(\xi_i,\xi_l\right),e_k\right)g\left(\varphi_lX,e_k \right)=0, 
\end{align*}
since $\sum_l g\left(T^o\left(\xi_i,\xi_l\right),\varphi_lX\right)\in g( Q^\perp, Q)=0$.
For the last step,   consider a local orthonormal frame $\{e_k\}_{k=1}^{4n+3}$ such that  $e_k=\xi_k$, $k=1,2,3$. Recall that $\eta_i(e_k)=0$ for $k\geq 4$. As $\mathrm{div}(T^o(\xi_i,\xi_j))=0$, we get 
\begin{align*}
& \mathrm{div}(T^o)(\xi_i,\xi_j) 
 =  -\sum_{k=1}^{4n+3} \left[ \eta_1\wedge\eta_2\wedge\eta_3(\nabla^g_{e_k}\xi_i,\xi_j,e_k) 
+\eta_1\wedge\eta_2\wedge\eta_3(\xi_i,\nabla^g_{e_k}\xi_j,e_k) 
\right] \\
& = \sum_{k=1}^3 \left[ \eta_1\wedge\eta_2\wedge\eta_3(\varphi_i\xi_k,\xi_j,\xi_k) 
+\eta_1\wedge\eta_2\wedge\eta_3(\xi_i,\varphi_j\xi_k,\xi_k) 
\right] \\
& = 2 g(\xi_{i+1}\times\xi_{i+2},\xi_j)
- 2 g(\xi_{j+1}\times\xi_{j+2},\xi_i) = 2 g(\xi_i,\xi_j) -2 g(\xi_j,\xi_i) =0.
\end{align*}

Finally, in order to prove that $\mathrm{div}(T^{r0})=0$, $r=1,2,3$, we apply the algebraical tools in Section~\ref{se_3} 
to obtain the covariant derivative of the torsion $T^{r0}$. Lemma~\ref{le_comoderivar} allows to work algebraically, by using the bilinear map $\alpha^g$ related to the Levi-Civita connection obtained in \eqref{eq_alfadeLevi}. 
Again, $\{e_k\}_{k=1}^{4n+3}$ denotes an orthonormal basis of $\mm=T_oM$ such that $\xi_k=e_k$ for $k=1,2,3$. It is very simple to see $(\nabla^g_{\xi_k}T^{r0})(\xi_i,\xi_j)=0$. Also, given  $X,Y,Z\in\mm$ horizontal vectors, we compute
\begin{align*}
(\nabla^g_XT^{r0})(\xi_i,\xi_j)&=\alpha^g(X,0)- T^{r0}([X,\xi_i],\xi_j)-T^{r0}(\xi_i,[X,\xi_j]) \\
&=-\delta_{rj}\varphi_0([X,\xi_i])+\delta_{ri}\varphi_0([X,\xi_j])
=\delta_{rj}\varphi_0\varphi_iX-\delta_{ri}\varphi_0\varphi_jX, \\
\mathrm{div}(T^{r0})(\xi_i,\xi_j) & =\sum_kg(e_k,(\nabla^g_{e_k}T^{r0})(\xi_i,\xi_j))=\tr(\delta_{rj}\varphi_0\varphi_i-\delta_{ri}\varphi_0\varphi_j)=0.
\end{align*}
Second, we have 
\begin{align*}
(\nabla^g_XT^{r0})(Y,\xi_j)&=\tfrac12\delta_{rj}[X,\varphi_0Y]_\mm+g(Y,\varphi_0\varphi_j(X))\xi_r,\vspace{2pt}\\
(\nabla^g_{\xi_k}T^{r0})(Y,\xi_j)&=-\tfrac12T^{r0}(Y,[\xi_k,\xi_j])=-\varepsilon_{kjs}T^{r0}(Y,\xi_s)\in  Q, 
\end{align*}
so, as  $\varphi_0Y$   is horizontal and $\ad( Q)$ interchanges $ Q$ and $ Q^\perp$, then 
\[
 \mathrm{div}(T^{r0})(Y,\xi_j) =\textstyle \sum_{k=4}^{4n+3}\frac{1}{2}g(e_k,\delta_{rj}[e_k,\varphi_0Y]_\mm)\in g( Q, Q^\perp)=0.
\]
Third, we see 
$T^{r0}(\alpha^g(X,Y),Z)= \frac{1}{2}\sum_ig([X,Y]_\mm,\xi_i)T^{r0}(\xi_i,Z)=-\frac12g([X,Y]_\mm,\xi_r)\varphi_0(Z)$ 	
$=\Phi_r(X,Y)\varphi_0Z$,  which gives
\begin{align*}
(\nabla^g_XT^{r0})(Y,Z)&= -\Phi_0(Y,Z)\varphi_rX- \Phi_r(X,Y)\varphi_0Z+ \Phi_r(X,Z)\varphi_0Y, \\
(\nabla^g_{\xi_k}T^{r0})(Y,Z)&=\tfrac12\Phi_0(Y,Z)[\xi_k,\xi_r]= \varepsilon_{krs}\Phi_0(Y,Z)\xi_s,
\end{align*}
and since $\varphi_0$ commutes with $\varphi_r$, and these maps have zero trace, then
\begin{align*}
\mathrm{div}(T^{r0})(Y,Z)&=\textstyle \sum_{k=4}^{4n+3}g\big(e_k,-\Phi_0(Y,Z)\varphi_re_k- \Phi_r(e_k,Y)\varphi_0Z+ \Phi_r(e_k,Z)\varphi_0Y\big)\vspace{2pt}\\
&=-\Phi_0(Y,Z)\tr\varphi_r- \textstyle \sum_kg(e_k,\varphi_rY)g(e_k,\varphi_0Z)+ g(\varphi_rZ,\varphi_0Y)\vspace{2pt}\\
&= g(Y,(\varphi_r\varphi_0-\varphi_0\varphi_r)Z)=0.
\end{align*}
Bearing all this in mind,  the divergence of $T=aT^o+\sum_{r,s=1}^{3} b_{rs}T^{rs}+\sum_rc_rT^{r0}$ is zero if, and only if, $B$ is symmetric, because $\{\eta_r\wedge\eta_{r+1},\Phi_r:r=1,2,3\}$ is a linearly independent set. 
\end{proof}

These results are coherent with \cite[Remark~5.16]{DraperGarvinPalomo}, where the divergences of the $\SU(4)$-invariant skew-symmetric torsions  are studied. But of course, there is much more generality here.

\begin{corollary}\label{co_ricsime}
 The invariant connection $\nabla=\nabla^g+aT^o+\sum_{r,s=1}^{3} b_{rs}T^{rs}+ \sum_{l=1}^3c_lT^{l0}$ has symmetric Ricci tensor if, and only if,  $B=(b_{rs})$ is a symmetric matrix.
\end{corollary}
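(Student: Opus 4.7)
The plan is to recognize that this corollary is essentially an immediate translation of the divergence computation in Proposition~\ref{pr_div} via the identity~\eqref{formulicas}. The skew-symmetric part of the Ricci tensor of any metric connection with totally skew-symmetric torsion is $\mathrm{Skew}(\mathrm{Ric}^{\nabla})=\tfrac12\mathrm{div}(T^{\nabla})$, so $\mathrm{Ric}^{\nabla}$ is symmetric if, and only if, $\mathrm{div}(T^{\nabla})=0$. Hence the whole task reduces to determining when the divergence of
\[
T^{\nabla}=a\,T^{o}+\sum_{r,s=1}^{3}b_{rs}T^{rs}+\sum_{l=1}^{3}c_{l}T^{l0}
\]
vanishes, and Proposition~\ref{pr_div} already provides this characterization: the conclusion is $B=B^{t}$.

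To make the argument fully self-contained (and to emphasise why the coefficients $a$ and $c_{l}$ play no role), I would spell out the linear combination of divergences. By Proposition~\ref{pr_div}, we have $\mathrm{div}(T^{o})=0$, $\mathrm{div}(T^{rr})=0$ and $\mathrm{div}(T^{l0})=0$, so that only the off-diagonal entries of $B$ contribute. Substituting the formulas $\mathrm{div}(T^{r,r+1})=2\Phi_{r+2}+(4n+2)\eta_{r}\wedge\eta_{r+1}$ and $\mathrm{div}(T^{r,r+2})=-2\Phi_{r+1}+(4n+2)\eta_{r}\wedge\eta_{r+2}$, a direct regrouping yields
\[
\mathrm{div}(T^{\nabla})=\sum_{t=1}^{3}\Bigl[\,2(b_{t+1,t+2}-b_{t+2,t+1})\,\Phi_{t}+(4n+2)(b_{t,t+1}-b_{t+1,t})\,\eta_{t}\wedge\eta_{t+1}\,\Bigr],
\]
with indices modulo~$3$.

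The key observation is the linear independence at the origin of the family $\{\Phi_{1},\Phi_{2},\Phi_{3},\eta_{1}\wedge\eta_{2},\eta_{2}\wedge\eta_{3},\eta_{3}\wedge\eta_{1}\}$ as elements of $\Lambda^{2}\mathfrak{m}^{*}$, which is clear since the $\Phi_{t}$ vanish on the vertical distribution while the $\eta_{r}\wedge\eta_{s}$ are supported there, and each of the two sets is itself linearly independent (coming from the orthonormality of the $\xi_{i}$ and the non-degeneracy of the $\varphi_{s}\vert_{Q}$). Consequently, $\mathrm{div}(T^{\nabla})=0$ forces $b_{r,s}=b_{s,r}$ for every $r\neq s$, i.e.\ $B$ symmetric; conversely, symmetry of $B$ makes every bracketed coefficient above vanish, so that $\mathrm{div}(T^{\nabla})=0$ and $\mathrm{Ric}^{\nabla}$ is symmetric.

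There is no real obstacle: the whole content is bookkeeping, and the only delicate step is being careful with the cyclic index convention in the formulas for $\mathrm{div}(T^{r,r+1})$ and $\mathrm{div}(T^{r,r+2})$ so that the coefficients of $\Phi_{t}$ and $\eta_{r}\wedge\eta_{s}$ line up correctly before invoking linear independence.
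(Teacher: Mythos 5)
Your argument follows exactly the paper's route: Corollary~\ref{co_ricsime} is read off from \eqref{formulicas}, which gives $\mathrm{Skew}(\mathrm{Ric}^{\nabla})=\tfrac12\mathrm{div}(T^{\nabla})$, so that symmetry of the Ricci tensor is equivalent to $\mathrm{div}(T^{\nabla})=0$, and then Proposition~\ref{pr_div} is invoked; your regrouping of the divergences into coefficients of $\Phi_t$ and $\eta_t\wedge\eta_{t+1}$ is correct and matches what the paper's ``in particular'' at the end of that proposition asserts.

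There is, however, one false statement in your justification of the linear independence of $\{\Phi_1,\Phi_2,\Phi_3,\eta_1\wedge\eta_2,\eta_2\wedge\eta_3,\eta_3\wedge\eta_1\}$: the $\Phi_t$ do \emph{not} vanish on the vertical distribution. Indeed, since $\varphi_i(\xi_k)=\epsilon_{ikl}\xi_l$, one has $\Phi_i(\xi_j,\xi_k)=\epsilon_{ikj}$, so for instance $\Phi_1(\xi_2,\xi_3)=-1$; in fact $\Phi_i\vert_{Q^\perp}=-\eta_{i+1}\wedge\eta_{i+2}$. The separation you want goes the other way: the forms $\eta_r\wedge\eta_s$ vanish identically on $Q\times Q$, whereas the $\Phi_t$ restricted to $Q$ are nondegenerate and linearly independent (by the quaternionic relations among the $\varphi_t\vert_Q$). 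So restrict a vanishing linear combination to $Q\times Q$ first to kill the coefficients of the $\Phi_t$, and then the remaining combination of the $\eta_r\wedge\eta_s$ is handled by evaluating on pairs of Reeb fields. With that correction the proof is complete and coincides with the paper's.
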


\subsection{$\estS$-Einstein affine connections}\label{sec_S-Einstein} Our aim is to analyse whether there are connections such that their Ricci tensors are multiple of the metric both in the horizontal and vertical distributions, but in general with different parameters. The following notion is clearly indebted to the notion of $\eta$-Einstein as in \cite[Definition 11.1.1]{galickiboyer}.
\begin{definition}\label{SEinstein}
If  $\estS=\{\xi_{\tau} ,\eta_{\tau}, \varphi_{\tau}\}_{\tau\in \mathbb{S}^{2}}$  is a    {$3$-Sasakian structure} on $(M,g)$, we will say that a (metric) affine connection with skew  torsion $\nabla$ is  \emph{${\estS}$-Einstein} if there are constants $\alpha,\beta\in\mathbb{R}$ such that
$$
\mathrm{Ric}^\nabla=\alpha g+\beta\sum_{k=1}^3\eta_k\otimes\eta_k.
$$
\end{definition}
Surprisingly,   there is a big amount of such connections in every  3-Sasakian    manifold.

\begin{theorem} \label{segundo}
Let $(M=G/H,g)$ be a homogeneous   3-Sasakian manifold with a 3-Sasakian structure $\estS$, being $\dim M\geq 7$. 
\begin{enumerate}[i)]
\item If $G\ne\SU(m)$, any nontrivial ${\estS}$-Einstein $G$-invariant affine connection on $M$  is given by 
$$
  \nabla^g+\frac12\left( aT^o+ \sum_{r,s=1}^3 b_{rs}T^{rs}\right),
$$ 
 for $a\in\R$ and $B\in\mathrm{CO}(3)$ such that $B=B^t$.

\item  If $G=\SU(m)$, any   ${\estS}$-Einstein $G$-invariant affine connection   
  on $M$  is given by 
$$
  \nabla^g+\frac12\left(  aT^o+ \sum_{r,s=1}^3 b_{rs}T^{rs}+\sum_{l=1}^3 c_lT^{l0}\right),
$$ 
 for $a\in\R$, $B\in \mathcal{M}_{3}(\R)$ and $c\in\R^3$ such that 
 \begin{equation}\label{eq_condicionSEins}
 B=B^t,\   BB^t+cc^t\in\R I_3, \ c^tB=0.
 \end{equation}
\end{enumerate}
\end{theorem}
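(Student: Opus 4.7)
The plan is to combine Corollary~\ref{co_ricsime} for the symmetry of the Ricci tensor with the explicit formulas for $\mathrm{Sym}(\mathrm{Ric}^{\nabla})$ on the blocks $Q\times Q$, $Q\times Q^{\perp}$ and $Q^{\perp}\times Q^{\perp}$ given by Proposition~\ref{prop_Sys}\,ii). Since $\alpha g+\beta\sum_{k=1}^{3}\eta_k\otimes\eta_k$ is a symmetric tensor, the $\estS$-Einstein condition in Definition~\ref{SEinstein} forces $\mathrm{Ric}^{\nabla}$ itself to be symmetric; by Corollary~\ref{co_ricsime}, this is equivalent to $B=B^t$. This establishes the first of the three conditions in~\eqref{eq_condicionSEins} and reduces the problem to comparing $\mathrm{Sym}(\mathrm{Ric}^{\nabla})$ with $\alpha g+\beta\sum_k\eta_k\otimes\eta_k$ block by block.

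Next, I would observe that $\alpha g+\beta\sum_k\eta_k\otimes\eta_k$ vanishes on $Q\times Q^{\perp}$, matching the vanishing from Proposition~\ref{prop_Sys}\,ii) on this block, so this block imposes no further constraint. On $Q^{\perp}\times Q^{\perp}$, the identity $\mathrm{Sym}(\mathrm{Ric}^{\nabla})(\xi_i,\xi_k)=(\alpha+\beta)\delta_{ik}$ combined with the formula of Proposition~\ref{prop_Sys}\,ii) is equivalent to the existence of some $\lambda\in\R$ (independent of $i,k$) with $\sum_{s}b_{is}b_{ks}+c_ic_k=\lambda\,\delta_{ik}$, that is, $BB^t+cc^t\in\R I_{3}$, which is the second condition in~\eqref{eq_condicionSEins}.

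The main obstacle is the $Q\times Q$ block in the $\SU(m)$-case, where Proposition~\ref{prop_Sys}\,ii) contains the extra term $-\sum_{s,j}b_{js}c_j\,g(\varphi_0X,\varphi_sY)=-\sum_{s}(c^tB)_s\,g(\varphi_0X,\varphi_sY)$, which must vanish identically for $X,Y\in Q$. To extract the condition $c^tB=0$, I would use that $\varphi_0\vert_Q$ is a bijection (from Eq.~\eqref{eq_accionesficasoU_0}, $\varphi_0^2\vert_Q=-\mathrm{id}_Q$) and that $\varphi_1\vert_Q$, $\varphi_2\vert_Q$, $\varphi_3\vert_Q$ are $\R$-linearly independent; together with the nondegeneracy of $g\vert_Q$, this forces $(c^tB)_s=0$ for each $s=1,2,3$, thus giving the third condition in~\eqref{eq_condicionSEins}.

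Conversely, assuming $B=B^t$, $BB^t+cc^t=\mu I_{3}$ and $c^tB=0$, I would substitute these into Proposition~\ref{prop_Sys}\,ii) and Corollary~\ref{co_ricsime} to conclude that $\mathrm{Ric}^{\nabla}$ is symmetric and satisfies $\mathrm{Ric}^{\nabla}=\alpha g+\beta\sum_k\eta_k\otimes\eta_k$ with $\alpha=4n+2-\tfrac{3\mu}{2}$ (using $\|B\|^{2}+\|c\|^{2}=\mathrm{tr}(BB^t+cc^t)=3\mu$) and $\beta=-\tfrac{1}{2}(a-\mathrm{tr}(B))^{2}+\bigl(\tfrac{3}{2}-n\bigr)\mu$, proving sufficiency. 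For item i) ($G\neq\SU(m)$) one simply sets $c=0$ and drops every $\varphi_0$ or $T^{l0}$ term, so the condition $c^tB=0$ becomes vacuous, while $BB^t\in\R I_{3}$ together with nontriviality yields $B\in\mathrm{CO}(3)$.
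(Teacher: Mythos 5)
Your proof is correct and follows essentially the same route as the paper's: symmetry of $\mathrm{Ric}^{\nabla}$ via Corollary~\ref{co_ricsime} gives $B=B^t$, the vertical block of Proposition~\ref{prop_Sys} gives $BB^t+cc^t\in\R I_3$, and the horizontal block gives $c^tB=0$ (the paper phrases the last two steps through the tensor $\s$ rather than $\mathrm{Sym}(\mathrm{Ric}^{\nabla})$, which is equivalent since $g$ is Einstein, and likewise concludes sufficiency by direct substitution). The only point to tighten is your claim that the $\varphi_0\varphi_s$-term on $Q\times Q$ "must vanish identically": a priori the $\estS$-Einstein condition only forces it to be a multiple of $g\vert_Q$, and one kills that multiple by observing $\mathrm{tr}(\varphi_0\varphi_s)=0$, or equivalently by using the linear independence of the full set $\{\mathrm{id}_Q,\varphi_0\varphi_1,\varphi_0\varphi_2,\varphi_0\varphi_3\}$ rather than of $\varphi_1,\varphi_2,\varphi_3$ alone, which is exactly how the paper argues when it invokes the independence of $\{\varphi_s\}_{s=0}^3$.
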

 
\begin{proof} Take $T=aT^o+\sum_{r,s=1}^{3} b_{rs}T^{rs}+\sum_lc_lT^{l0}$ a skew-symmetric torsion related to a ${\estS}$-Einstein invariant affine 
${\estS}$-Einstein connection on $M$. We denote $c_l=0$ if $G\ne\SU(m)$.
First,   the Ricci tensor must be   symmetric, so that  $B=(b_{rs})$ is a symmetric matrix by Corollary~\ref{co_ricsime}.
As $M$ is Einstein (in the usual sense), the   ${\estS}$-Einstein condition can be written as  $S= {\lambda}g+\beta\sum_k\eta_k\otimes\eta_k$ for some scalars $\lambda, \beta\in\R$.
For vertical vectors,  Proposition~\ref{prop_Sys} gives
$$
(\lambda+ \beta)\delta_{ij}  =S(\xi_i,\xi_j)= 2(a-\mathrm{tr}(B))^2\delta_{ij}+4n(\textstyle \sum_{s=1}^3b_{is}b_{js}+c_ic_j),
$$
so that $cc^t+BB^t=\frac{2(a-\mathrm{tr}(B))^2-\lambda- \beta}{-4n} I_3$.
But, if $X,Y $ are horizontal, the identity 
$$
 \lambda g(X,Y)= S(X,Y)=
g\left(X,2(\|B\|^2+\|c\|^2) Y -4\textstyle \sum_{s,j=1}^3 b_{js}c_j\varphi_0\varphi_sY\right),
$$
jointly with the  nondegeneracy of $g$ and the linear independence of $\{\varphi_s\}_{s=0}^3$,
imply $\lambda=2(\|B\|^2+\|c\|^2)$ and $\sum_{s,j=1}^3 b_{js}c_j=0$, that is, $c^tB=0$.   The obtained conditions are of course sufficient.
\end{proof}

Observe that the set described in Eq.~\eqref{eq_condicionSEins} includes $c=0$, $B\in \mathrm{CO}(3)$, $B=B^t$. Besides, for $c\ne0$, condition 
\eqref{eq_condicionSEins} forces $\|B\|^2=2\|c\|^2$,  $BB^t+cc^t=\|c\|^2I_3$ and $\det(B)=0$.  

Again, we emphasize that the homogeneity is only necessary to guarantee that the found set is the whole set of invariant ${\estS}$-Einstein affine connections. But  the connections on Theorem~\ref{segundo}\,i)  are  ${\estS}$-Einstein affine connections  in any 3-Sasakian manifold of dimension strictly bigger than $3$. 

 The obtained set of invariant ${\estS}$-Einstein connections is invariant for the $\mathrm{SO}(3)$-action described in Remark~\ref{re_accion}. We expected this fact, because Definition~\ref{SEinstein} does not depend on the choice of the three compatible Sasakian structures.

\subsection{A Distinguished Connection}\label{distinguida}
Recall again the importance of the connections with skew  torsion on Sasakian manifolds (Remark~\ref{re_labuena}), due to the fact that the Levi-Civita connection does not parallelize the Reeb vector fields, while there are connections with skew  torsion which do it. We would like to give an  answer to the question of finding a connection that parallelizes all Reeb vector fields. 

\begin{theorem} \label{new}
Let $(M,g)$ be a  3-Sasakian manifold of dimension at least $7$, with   a 3-Sasakian structure ${\estS}=\{\xi_{\tau} ,\eta_{\tau}, \varphi_{\tau}\}_{\tau\in \mathbb{S}^{2}}$.
There exists an affine connection with skew  torsion $\nabla^{\estS}$  on $M$  satisfying
 $$
 \nabla^{\estS} \xi_\tau=0
 $$
 for any $\tau\in \mathbb{S}^{2}$, whose  torsion  is  given by 
 $$
T^{\nabla^{\estS}}= 4T^o+ 2\sum_{r=1}^3 T^{rr}. 
$$ 
The above connection $\nabla^{\estS}$ is:
\begin{itemize}
\item  Einstein   with skew  torsion, with symmetric Ricci tensor, if $\dim M=7$;
\item ${\estS}$-Einstein for arbitrary dimension.
\end{itemize}
 Furthermore, if $M=G/H$ is homogeneous, this is the unique $G$-invariant affine connection with skew  torsion satisfying that every $\xi_\tau$ is parallel.
 \end{theorem}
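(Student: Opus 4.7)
The plan is to \emph{define} $\nabla^{\estS}:=\nabla^g+\frac12 T$ with $T:=4T^o+2\sum_{r=1}^3 T^{rr}$ (which is automatically metric with skew torsion, since the tensors $T^o$ and $T^{rr}$ correspond to invariant $3$-forms via \eqref{def_torsiones}), and then to verify the parallelism, the Einstein and $\estS$-Einstein properties, and uniqueness in order. Throughout, the basic input is \eqref{eq_torsiones} together with the Sasakian identity $\nabla^g_X\xi_i=-\varphi_iX$ and the compatibility relations $\varphi_i(\xi_j)=\xi_{i\times j}$.

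First, I would check $\nabla^{\estS}\xi_i=0$ for $i=1,2,3$ by splitting into horizontal and vertical directions (the extension to arbitrary $\xi_\tau$ being then immediate by linearity in $\tau$). For $X$ horizontal, \eqref{eq_torsiones} gives $T^o(X,\xi_i)=0$ and $T^{rr}(X,\xi_i)=\delta_{ri}\varphi_rX$, whence $\frac12T(X,\xi_i)=\varphi_iX=-\nabla^g_X\xi_i$. For $X=\xi_k$, one has $T^o(\xi_k,\xi_i)=\xi_k\times\xi_i$, and a direct computation using $\varphi_r(\xi_j)=\xi_{r\times j}$ (organized by a single representative case, say $(k,i)=(1,2)$, invoking the cyclic symmetry of $\estS$) yields $\sum_r T^{rr}(\xi_k,\xi_i)=-3\,\xi_k\times\xi_i$. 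Hence $\frac12T(\xi_k,\xi_i)=(2-3)\xi_k\times\xi_i=-\xi_k\times\xi_i$, which exactly cancels $\nabla^g_{\xi_k}\xi_i=-\varphi_i\xi_k=\xi_k\times\xi_i$. Note that this step uses only the pointwise 3-Sasakian structure, not homogeneity.

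Second, I would obtain the $\estS$-Einstein property by substituting $(a,B,c)=(4,2I_3,0)$ into the formulas of Proposition~\ref{prop_Sys} (whose derivation is pointwise and applies in any 3-Sasakian manifold): since $\tr B=6$, $(a-\tr B)^2=4$, and $\|B\|^2=12$, one gets
\[
\mathrm{Sym}(\mathrm{Ric}^\nabla)(\xi_i,\xi_k)=(4n+2-2-4n)\delta_{ik}=0,\qquad
\mathrm{Sym}(\mathrm{Ric}^\nabla)\vert_{Q\times Q^\perp}=0,
\]
and $\mathrm{Sym}(\mathrm{Ric}^\nabla)(X,Y)=(4n-4)g(X,Y)$ for horizontal $X,Y$. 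Since $B=2I_3$ is symmetric, Corollary~\ref{co_ricsime} (via Proposition~\ref{pr_div}) gives that $\mathrm{Ric}^\nabla$ itself is symmetric, so
\[
\mathrm{Ric}^\nabla=(4n-4)\,g+(4-4n)\sum_{k=1}^3\eta_k\otimes\eta_k,
\]
which is the $\estS$-Einstein form with $\alpha=4n-4$, $\beta=4-4n$. When $n=1$ this yields $\mathrm{Ric}^\nabla=0$, so in particular $\mathrm{Sym}(\mathrm{Ric}^\nabla)=\frac{s^\nabla}{7}g$ with $s^\nabla=0$, and $\nabla^{\estS}$ is Einstein with skew torsion.

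Third, for uniqueness assume $M=G/H$ is homogeneous and let $\nabla$ be any $G$-invariant connection with skew torsion satisfying $\nabla\xi_i=0$ for $i=1,2,3$. Corollary~\ref{co_lasskew} writes $\nabla=\nabla^g+\frac12\bigl(aT^o+\sum b_{rs}T^{rs}+\sum c_l T^{l0}\bigr)$ (with $c=0$ unless $G=\SU(m)$). Evaluating $\nabla_X\xi_i=0$ on horizontal $X$ and using \eqref{eq_torsiones} together with the analogous identity $T^{l0}(X,\xi_i)=\delta_{li}\varphi_0 X$ produces
\[
\sum_{s=1}^3 b_{is}\varphi_s X + c_i\varphi_0 X = 2\varphi_i X\qquad \text{for every horizontal } X.
\]
The endomorphisms $\varphi_1,\varphi_2,\varphi_3$ (and $\varphi_0$ in the $\SU$-case) are linearly independent on $Q$—the first three since $g(\varphi_s X,\varphi_i X)=\delta_{si}g(X,X)$, and $\varphi_0$ is independent of them by the explicit action in Remark~\ref{la phi0} (for instance, evaluating on $(z,0)$ versus $(0,w)$ distinguishes $\varphi_0$ from $\varphi_1$). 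This forces $b_{rs}=2\delta_{rs}$ and $c_l=0$. Finally, $\nabla_{\xi_k}\xi_i=0$ with $k\ne i$, combined with the vertical computation from the first step, imposes $(a-6)\,\xi_k\times\xi_i=-2\,\xi_k\times\xi_i$, i.e.\ $a=4$.

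The main obstacle is the vertical-vertical computation $\sum_r T^{rr}(\xi_k,\xi_i)=-3\,\xi_k\times\xi_i$: it is routine but delicate with the $\varepsilon_{ijk}$ bookkeeping. The cleanest way is to verify it for a single representative pair $(k,i)=(1,2)$ and invoke equivariance under the $\mathrm{SO}(3)$-action on the triple $\{\xi_1,\xi_2,\xi_3\}$ described in Remark~\ref{re_accion}, which preserves the combination $4T^o+2\sum_r T^{rr}$.
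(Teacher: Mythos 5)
Your proposal is correct and follows essentially the same route as the paper: the paper's proof likewise derives $c=0$, $B=2I_3$ from $\nabla_X\xi_i=0$ on horizontal $X$ via linear independence of $\{\varphi_s\}_{s=0}^3$, and then $a=4$ from the vertical directions, which gives both existence and uniqueness at once. The only difference is that you spell out the verifications the paper dismisses with ``all that remains is clear'' (namely, substituting $(a,B,c)=(4,2I_3,0)$ into Proposition~\ref{prop_Sys} and Corollary~\ref{co_ricsime}, and noting that these computations are pointwise and hence valid without homogeneity), and your arithmetic there checks out.
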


\begin{proof}
Let $\nabla$ be the affine connection on $M=G/H$ given by (\ref{eq_todasskewtorsion2}), where $c=0$ when $G\neq \SU(m)$, $m\geq 3$. 
Using Eq.~\eqref{eq_torsiones}, we get, for a horizontal vector field $X\in Q $,
\begin{equation}\label{eq_cuenta}
\nabla_X\xi_i=-\varphi_iX+\frac12\left(\sum_{s=1}^3b_{is}\varphi_sX+c_i\varphi_0X\right).
\end{equation}
Thus, if $\nabla_X\xi_i=0$ for any $i$, the linear independence of $\{\varphi_s\}_{s=0}^3$ gives $c=0$ and $B=2I_3$. By this, 
$$
\nabla_{\xi_{i+1}}\xi_i=-\varphi_i\xi_{i+1}+\frac12\left(aT^o(\xi_{i+1},\xi_{i})+\sum_{r=1}^32 T^{rr}(\xi_{i+1},\xi_{i})\right)=
\left(2-\frac{a}{2}\right)\xi_{i+2}, 
$$
which vanishes if, and only if,  $a=4$. This proves the uniqueness of $\nabla^{\estS}$ in the homogeneous case. All that remains is clear. 
\end{proof}

This connection $\nabla^{\estS}$   does not coincide with   any of the connections mentioned in Remark~\ref{G2}, since now $(s,B)=(-1,2I_3)\in\mathbb Z_2\times  {\mathrm{CO}}(3)$ following the correspondence  \eqref{eq_sonvalidas}.    

It is important to remark  that the connection $\nabla^{\estS}$ does not parallelize the endomorphism fields $\varphi_\tau$.        
{But in the next Corollary~\ref{le_ficompatible} we check that $\nabla^{\estS}$ satisfies the  following slightly weaker condition proposed in \cite[Definition 3.1.1]{AgriDileo} for a larger class of manifolds, the almost 3-contact metric manifolds. 
\begin{definition}
Let $(M,g, {\estS})$ be a  3-Sasakian manifold and $\varphi=\varphi_\tau$ for some $\tau\in\mathbb S^2$. A metric connection with skew  torsion $\nabla$ is called a \emph{$\varphi$-compatible connection}  if
\begin{itemize}
\item[i)] $\nabla_XY$ is horizontal if $Y$ is a horizontal vector field, and is vertical if $Y$ is a vertical vector field,  for any $X\in \mathfrak{X}(M)$; 
\item[ii)] $(\nabla_X\varphi)(Y)=0$ for any $X,Y$ horizontal vector fields.
\end{itemize}
\end{definition}
On an almost 3-contact metric manifold, the torsion of any $\varphi$-compatible connection is determined by $\gamma=\omega_{_\nabla}(\xi_1,\xi_2,\xi_3)$, according to \cite[Theorem~3.1.1 and Remark~3.1.2]{AgriDileo}. 
Our aim of finding explicit comparisons to the current literature, makes us wonder which of our invariant affine connections are $\varphi$-compatible.
\begin{corollary}\label{le_ficompatible} 
Let $\nabla=\nabla^g+ \frac12\left( a T^o +\sum_{r,s=1}^3 b_{rs} T^{rs}+\sum_{l=1}^3c_lT^{l0}\right)$ be an affine connection on a 3-Sasakian homogeneous manifold. Then 
\begin{itemize}
\item[a)] $\nabla$ is $\varphi$-compatible for  $\varphi=\varphi_\tau$ for some $\tau\in\mathbb S^2$ if and only if $B=2I_3$ and $c=0$. In such  case,  
$\gamma=\omega_{_\nabla}(\xi_1,\xi_2,\xi_3)=a-6$. Moreover, all these connections are $\varphi_\tau$-compatible for any $\tau\in\mathbb S^2$ and are ${\estS}$-Einstein too.
\item[b)] If $\dim M=7$, $\nabla$ is $\varphi$-compatible for  $\varphi=\varphi_\tau$ for some $\tau\in\mathbb S^2$ and Einstein with skew  torsion if and only if $a\in\{4,8\}$, $B=2I_3$ and $c=0$. The first connection is 
$\nabla^{\estS}$ and the second one has torsion 3-form equal to
$$
\omega=2\omega_{_{G_2}}=\left( \eta_{1}\wedge d\eta_{1}+   \eta_{2}\wedge d\eta_{2}+ \eta_{3}\wedge d\eta_{3}\right)+8  \eta_{1} \wedge \eta_{2}\wedge \eta_{3},
$$
which is a $G_2$-structure on $M$ (compare with \eqref{eq_laG2est}).
\end{itemize}  
\end{corollary}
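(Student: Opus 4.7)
My plan for part (a) is to dissect the $\varphi$-compatibility into its two defining conditions and show that condition (i) already determines the parameters, after which condition (ii) becomes automatic. First I would observe that the horizontal and vertical distributions $Q$ and $Q^\perp$ do not depend on $\tau$, so condition (i) is equivalent (via metric compatibility $g(\nabla_X Y,\xi_i)=-g(Y,\nabla_X\xi_i)$) to the single statement \lq\lq$\nabla_X\xi_i$ has no horizontal component, for all $X$ and all $i$.\rq\rq{} For horizontal $X$, Eq.~\eqref{eq_cuenta} expresses $\nabla_X\xi_i$ as a linear combination of the horizontal vectors $\varphi_s X$ ($s=0,1,2,3$), so it is \emph{a priori} horizontal, and forcing it to vanish together with the linear independence of $\{\varphi_s X\}_{s=0}^3$ yields exactly $b_{is}=2\delta_{is}$ and $c_i=0$, i.e.\ $B=2I_3$ and $c=0$. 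For $X=\xi_j$ vertical one checks directly that $\nabla^g_{\xi_j}\xi_i=-\varphi_i\xi_j=-\epsilon_{ijk}\xi_k\in Q^\perp$ and that each of $T^o(\xi_j,\xi_i),T^{rr}(\xi_j,\xi_i),T^{l0}(\xi_j,\xi_i)$ lies in $Q^\perp$ (the last one vanishes since $\varphi_0|_{\spf(1)}=0$), so this case imposes no new constraints.

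Once $B=2I_3$ and $c=0$ are in place, the torsion reduces to $T=aT^o+2\sum_r T^{rr}$, and for horizontal $X,Y$ the formula $T(X,Y)=2\sum_r\Phi_r(X,Y)\xi_r$ holds. I would plug this into
\[
(\nabla_X\varphi_k)Y=(\nabla^g_X\varphi_k)Y+\tfrac12\bigl(T(X,\varphi_k Y)-\varphi_k T(X,Y)\bigr),
\]
use the Sasakian identity $(\nabla^g_X\varphi_k)Y=g(X,Y)\xi_k$ for horizontal $Y$, the quaternionic relation $\varphi_r\varphi_k=\varphi_{r\times k}-(r\cdot k)\mathrm{id}+\eta_k\otimes\xi_r$, and $\varphi_k\xi_r=\xi_{k\times r}=\sum_j\epsilon_{krj}\xi_j$. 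A careful bookkeeping using the cyclic symmetry $\epsilon_{rkj}=\epsilon_{kjr}$ makes the two sums $\sum_{r,j}\epsilon_{rkj}\Phi_j(X,Y)\xi_r$ cancel, leaving $T(X,\varphi_k Y)-\varphi_k T(X,Y)=-2g(X,Y)\xi_k$, which exactly cancels the Sasakian term. This both finishes condition (ii) and shows that compatibility actually holds for every $k$ (hence for every $\tau\in\mathbb{S}^2$ by linearity). The expected obstacle is precisely this bookkeeping: it is routine but sign-sensitive, and this is where I would be most careful.

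To finish part (a), I would compute $\gamma$ directly: from Lemma~\ref{bases} and the definitions, $T^o(\xi_1,\xi_2)=\xi_3$, while $T^{rr}(\xi_1,\xi_2)=-\xi_3$ for each $r=1,2,3$ (a three-line verification using $\varphi_i\xi_j=\epsilon_{ijk}\xi_k$ and $\Phi_s(\xi_i,\xi_j)=g(\xi_i,\varphi_s\xi_j)$). Hence $T(\xi_1,\xi_2)=(a-6)\xi_3$ and $\gamma=g(T(\xi_1,\xi_2),\xi_3)=a-6$. The $\estS$-Einstein claim is then immediate from Theorem~\ref{segundo}: $B=2I_3$ is symmetric, $BB^t+cc^t=4I_3\in\R I_3$, and $c^tB=0$, so the conditions of both parts of that theorem are met.

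For part (b), having fixed $B=2I_3$ and $c=0$ from part (a), I would apply Theorem~\ref{dimn=1} in dimension $7$: the admissible scalar is $a=\tr(B)\pm\sqrt{(\|B\|^2+\|c\|^2)/3}=6\pm 2$, yielding $a\in\{4,8\}$. The value $a=4$ gives $T=4T^o+2\sum_r T^{rr}$, which is $\nabla^\estS$ by Theorem~\ref{new}. For $a=8$, I would use $d\eta_r=2\Phi_r$ to rewrite the torsion 3-form as $\omega=\sum_{r=1}^3\eta_r\wedge d\eta_r+8\,\eta_1\wedge\eta_2\wedge\eta_3=2\omega_{_{G_2}}$, using the explicit form of $\omega_{_{G_2}}$ in~\eqref{eq_laG2est}. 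This identifies $\omega$ with twice the $G_2$-structure on $\mathbb{S}^7$ (and on the other 7-dimensional examples), completing the corollary.
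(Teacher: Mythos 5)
Your proposal is correct and follows essentially the same route as the paper: condition (i) of $\varphi$-compatibility combined with Eq.~\eqref{eq_cuenta} and the linear independence of $\{\varphi_s\}_{s=0}^3$ forces $c=0$ and $B=2I_3$, and part (b) is then a direct application of Theorem~\ref{dimn=1} giving $a=6\pm2$. The paper's own proof is far terser (it dismisses the verification of condition (ii), the value of $\gamma$, and the ${\estS}$-Einstein claim as ``easily achieved''), so your explicit check that $T(X,\varphi_k Y)-\varphi_k T(X,Y)=-2g(X,Y)\xi_k$ cancels the Sasakian term is precisely the bookkeeping the authors leave to the reader, and it is carried out correctly.
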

\begin{proof} For a horizontal vector field $X$, by \eqref{eq_cuenta}, $\nabla_X\xi_i$ is horizontal too.
If our connection  $\nabla$ satisfies condition i) in the above definition,   then $\nabla_X\xi_i=0$. Hence, as in the previous proof, $c=0$ and $B=2I_3$. Now a) is easily achieved. For b) we use Theorem~\ref{dimn=1}, so that
$
a= \mathrm{tr}(B)\pm\sqrt{\|  B \|^2 /3}=6\pm2\in\{4,8\}.$
\end{proof}

\begin{remark}\label{re_canonica}\normalfont 
The  above results agree with \cite{AgriDileo}. Indeed,  following its notation, 
we can apply \cite[Proposition~3.2.2]{AgriDileo} ($\alpha=\delta=1$) to 
any 3-Sasakian homogeneous manifold $M$, so that $M$ admits $\varphi$-compatible connections such that $\nabla_X\xi_i=0$  for $\gamma=-2$. 
We can identify a relevant affine connection in that paper, called the {canonical connection}.
According to \cite[Theorem~4.1.1]{AgriDileo}, the \emph{canonical connection} is the (metric)  affine connection with skew  torsion   characterized by 
\begin{equation}\label{eq_laquesalemucho}
\nabla_X\varphi_i=\beta(\eta_{i+2}(X)\varphi_{i+1}-\eta_{i+1}(X)\varphi_{i+2})
\end{equation}
for the Reeb Killing function $\beta$. In fact, $\nabla$ is $\varphi$-compatible. Let us check that this canonical connection   coincides with $\nabla^{c}$ in Remark~\ref{G2} iv), whose torsion is
\begin{equation*} 
\omega^c= \sum_{i=1}^3 \eta_{i}\wedge d\eta_{i}.
\end{equation*}
According to the uniqueness, it is enough to check that $\nabla^{c}$  satisfies Eq.~\eqref{eq_laquesalemucho}. Instead, we are checking that, if $\nabla$ is an invariant $\varphi$-compatible connection satisfying Eq.~\eqref{eq_laquesalemucho}, then necessarily $\nabla=\nabla^{c}$. First, $\nabla$ has the form \eqref{eq_todasskewtorsion2} with $B=2I_3$ and $c=0$, as above. On one hand, with the help of \eqref{eq_torsiones}, 
$$
\begin{array}{l}
\nabla^g_{\xi_{i+1}}\varphi_i(Y) =\eta_{i+1}(Y)\xi_{i}-\eta_{i}(Y)\xi_{i+1}=\varphi_{i+2}(-Y^v),\vspace{1pt}\\
 T^o(\xi_{i+1},\varphi_i(Y))-\varphi_i(T^o(\xi_{i+1},Y))=\varphi_{i+2}(-Y^v),\vspace{1pt}\\
 T^{rr}(\xi_{i+1},\varphi_i(Y))-\varphi_i(T^{rr}(\xi_{i+1},Y))= \varphi_{i+2}(Y^v+2\delta_{r,i+1}Y^h),\end{array}
$$
for any $r=1,2,3$ and $Y^v$ and $Y^h$ the vertical and horizontal projection of any vector field $Y$. Hence $\nabla_{\xi_{i+1}}\varphi_i(Y)=(-1-\frac{a}2+3)\varphi_{i+2}(Y)$ if $Y$ is vertical, while  $\nabla_{\xi_{i+1}}\varphi_i(Y)=2\varphi_{i+2}(Y)$ if $Y$ is horizontal. On the other hand,    \eqref{eq_laquesalemucho} gives $\nabla_{\xi_{i+1}}\varphi_i=-\beta\varphi_{i+2}$.
Hence  $2=-\beta=-1-\frac{a}2+3$ and $a=0$, 
that is, $\nabla=\nabla^{c}$.
\end{remark}

Thus, the results in Corollary~\ref{le_ficompatible} a) say that an invariant affine connection with skew  torsion,  $\nabla$,
on a 3-Sasakian homogeneous manifold is $\varphi$-compatible if and only if $T^\nabla-T^{\nabla^{c}}\in\mathbb R T^o$.

Observe that, although the different ways of distinguishing an affine connection well adapted to the 3-Sasakian structure have   led to different connections, there are a few candidates which appear frequently.  For instance,    $\nabla^{c}$ is not an Einstein with skew  torsion connection, but  $\nabla^{c}T^{\nabla^{c}}=0$ and it also satisfies Eq.~\eqref{eq_laquesalemucho}. All of these \emph{persistent} affine connections  turn to be invariant metric with skew  torsion (and ${\estS}$-Einstein) connections in the homogeneous case.

\begin{remark}\label{111}
{\rm 
More general metric connections on $3$-Sasakian manifolds with totally skew-symmetric torsion only for tangent vectors in the horizontal distribution   have been proposed in \cite{Cappeletti}.
From (\ref{eq_torsiones}), it is a direct computation that the torsion tensor $T^{\nabla^{\estS}}$ given in Theorem~\ref{new} satisfies, for any $X,Y $ horizontal vector fields,
$$\begin{array}{l}
T^{\nabla^{\estS}}(X,Y)=2\sum_{r=1}^3\Phi_{r}(X,Y)\xi_{r},\quad 
 T^{\nabla^{\estS}}(X, \xi_{i})=4\varphi_{i}(X) , \\
 T^{\nabla^{\estS}}(\xi_{i}, \xi_{i+1})=2 \xi_{i}\times \xi_{i+1}-4\xi_{i+2}=-2 \xi_{i+2}.
 \end{array}
 $$
This means that $\nabla^{\estS}$ does not satisfy the conditions in  \cite[Definition 4.1]{Cappeletti}. In fact, it can be checked that  none of the invariant affine connections in \eqref{eq_todasskewtorsion2} does it.
}
\end{remark}

\subsection{Parallel torsions} \label{sec_paralela} This subsection is devoted to   the invariant affine connections on $3$-Sasakian manifolds with parallel skew-symmetric torsion. Some examples of invariant affine connections  with nonzero parallel skew  torsion on $3$-Sasakian manifolds have  previously appeared. Namely,    $\nabla_{\tau}^{ch}$ in Remark~\ref{re_labuena} and $\nabla^{c}$ in Remark~\ref{G2}.  There is also a family of such connections on the sphere $\mathbb S^7$, recently  found in  \cite[Remark~5.16]{DraperGarvinPalomo}. Next we will prove that there exists a similar family  in the Aloff-Wallach space 
$\mathfrak{W}^{7}_{1,1}$ too, and that these examples essentially exhaust the invariant examples on the homogeneous 3-Sasakian manifolds.
\begin{lemma} \label{cuentasauxiliares} Given $X,Y,Z\in \mm$, 
\begin{enumerate}[(a)]
\item $(\nabla_XT)(Y,Z) = (\nabla^g_XT)(Y,Z)+\displaystyle \frac{ T(X,T(Y,Z)) + T(Y,T(Z,X))+T(Z,T(X,Y))}{2}.$ 

\item For $X,Z\in\g_{1}$, $\eta_r([X,Z]_{\mm}) =2g(\varphi_r X,Z).$
\end{enumerate}
\end{lemma}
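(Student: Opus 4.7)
The plan is to prove both parts by direct algebraic manipulation, relying on identities already established in the paper: for part (a) the general relation $\nabla=\nabla^g+\frac{1}{2}T$ between a metric connection with skew torsion and the Levi-Civita connection (noted in Section~\ref{seccion2}, just before the last remark there), combined with the skew-symmetry of $T$ as a $(1,2)$-tensor; and for part (b) the explicit expression \eqref{eq_nuestrag} of the invariant metric in terms of the Killing form, together with the associativity of the latter.

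For part (a), I would start from the definition of the covariant derivative of the $(1,2)$-tensor $T$,
\begin{equation*}
(\nabla_X T)(Y,Z)=\nabla_X(T(Y,Z))-T(\nabla_X Y, Z)-T(Y, \nabla_X Z),
\end{equation*}
and substitute $\nabla=\nabla^g+\frac{1}{2}T$ into each of the three occurrences of $\nabla$ on the right-hand side. This yields the analogous expression with $\nabla^g$ in place of $\nabla$, plus three correction terms: $+\frac{1}{2}T(X,T(Y,Z))$, $-\frac{1}{2}T(T(X,Y),Z)$ and $-\frac{1}{2}T(Y,T(X,Z))$. Using the skew-symmetry $T(A,B)=-T(B,A)$, the last two can be rewritten as $+\frac{1}{2}T(Z,T(X,Y))$ and $+\frac{1}{2}T(Y,T(Z,X))$, which produces exactly the cyclic sum on the right-hand side of the claim.

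For part (b), my approach would be to observe first that $X,Z\in \g_{1}$ implies $[X,Z]\in\g_{0}=\spf(1)\oplus\hh$, so $[X,Z]_\mm=[X,Z]_{\spf(1)}$. Then, using $\eta_r(V)=g(V,\xi_r)$ and the expression of $g\vert_{\spf(1)}$ from \eqref{eq_nuestrag}, and bearing in mind that $\kappa(\hh,\spf(1))=0$ (a fact already exploited in the proof of Theorem~\ref{le_laestructura!}), we obtain
\begin{equation*}
\eta_r([X,Z]_\mm)=-\frac{1}{4(n+2)}\,\kappa([X,Z],\xi_r).
\end{equation*}
The associativity of the Killing form gives $\kappa([X,Z],\xi_r)=-\kappa(X,[\xi_r,Z])$, and recalling $\varphi_r(Z)=[\xi_r,Z]$ for $Z\in\g_{1}$ from \eqref{derivada de xi}, this equals $-\kappa(X,\varphi_r Z)$. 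Using the value of $g\vert_{\g_{1}}$ in \eqref{eq_nuestrag}, together with the skew-symmetry $g(X,\varphi_r Z)=-g(\varphi_r X,Z)$, we get $-\kappa(X,\varphi_r Z)=8(n+2)\,g(\varphi_r X,Z)$. Multiplying by the outer factor $-\tfrac{1}{4(n+2)}$ produces the desired $2g(\varphi_r X,Z)$.

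Neither step looks delicate; the only point requiring some care is the bookkeeping of signs and of the two distinct normalizations $\tfrac{1}{4(n+2)}$ and $\tfrac{1}{8(n+2)}$ used for $g\vert_{\spf(1)}$ and $g\vert_{\g_{1}}$ respectively, whose ratio is precisely what produces the factor $2$ appearing in (b).
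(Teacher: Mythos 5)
Your argument for part (a) is exactly the paper's proof: expand $(\nabla_XT)(Y,Z)$ from the definition, substitute $\nabla=\nabla^g+\tfrac12 T$ in each of the three occurrences, and use the skew-symmetry of $T$ to turn the three correction terms into the cyclic sum. Your strategy for part (b) is also the paper's, which records it in one line as $\eta_r([X,Z]_{\mm})=g(\xi_r,[X,Z])=2g([\xi_r,X],Z)$ by invoking \eqref{eq_nuestrag} and the associativity of $\kappa$; and you are right that the factor $2$ is precisely the ratio of the two normalizations in \eqref{eq_nuestrag}.

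There is, however, a sign slip in your intermediate step for (b). From $g\vert_{\g_{1}}=-\tfrac{1}{8(n+2)}\kappa$ one gets $-\kappa(X,\varphi_rZ)=8(n+2)\,g(X,\varphi_rZ)$, and then the skew-adjointness $g(X,\varphi_rZ)=-g(\varphi_rX,Z)$ gives
\begin{equation*}
-\kappa(X,\varphi_rZ)=-8(n+2)\,g(\varphi_rX,Z),
\end{equation*}
not $+8(n+2)\,g(\varphi_rX,Z)$ as you wrote. Followed literally, your stated values would produce $-\tfrac{1}{4(n+2)}\cdot 8(n+2)\,g(\varphi_rX,Z)=-2g(\varphi_rX,Z)$, the wrong sign. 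With the corrected intermediate value the outer factor $-\tfrac{1}{4(n+2)}$ does yield the claimed $+2g(\varphi_rX,Z)$, so the method is sound and only this piece of bookkeeping needs fixing.
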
 
\begin{proof} Firstly, given $X,Y,Z\in \mm$, 
\begin{align*} 
& (\nabla_XT)(Y,Z) = \nabla_XT(Y,Z)- T(\nabla_XY,Z)-T(Y,\nabla_XZ) \\ \nonumber 
& = \nabla^g_XT(Y,Z)+\frac12 T(X,T(Y,Z)) - T(\nabla^g_XY,Z) -\frac12 T(T(X,Y),Z)  -T(Y,\nabla^g_XZ) \\
&\quad -\frac12 T(Y,T(X,Z)) \\
& =(\nabla^g_XT)(Y,Z)+\frac12\left[ T(X,T(Y,Z)) + T(Y,T(Z,X))+T(Z,T(X,Y))\right]. 
\end{align*}
Secondly, for $X,Z\in\g_{1}$, then  
$$\eta_r([X,Z]_{\mm}) =g(\xi_r,[X,Z])=2g([\xi_r,X],Z)=2g(\varphi_r X,Z),$$
by recalling \eqref{eq_nuestrag} and the associativity of the Killing form $\kappa$.
\end{proof}

\begin{theorem}\label{th_paralela}  
Given $M=G/H$ a 3-Sasakian homogeneous manifold of dimension at least 7, let $\nabla$ be a $G$-invariant metric affine connection on $M$ with nonzero skew  torsion $T$. Then $T$ is parallel with respect to $\nabla$, i.~e. $\nabla T=0$ if, and only if, $\nabla$ is one of the following:
\begin{itemize}
\item[i)] The canonical connection of the 3-Sasakian manifold, that is, the one with torsion $ \omega^c= \sum_i\eta_i\wedge d\eta_i$;
\item[ii)] The characteristic connection of one of the involved Sasakian structures, that is, the one with torsion
$\omega_{\tau}^{ch}= \eta_\tau\wedge d\eta_\tau$ for some $\tau\in\mathbb S^2$;
\end{itemize}
and, under the additional assumption $\dim M=7$, also
\begin{itemize}
\item[iii)] The connection with associated torsion   
\begin{equation}\label{eq_paralela7}
 \omega_{_{\nabla}}=  \frac13\left(\eta_{\tau_1}\wedge d\eta_{\tau_1}+ \eta_{\tau_2}\wedge d\eta_{\tau_2}- \eta_{\tau_3}\wedge d\eta_{\tau_3}\right),
\end{equation}
for some positively oriented orthonormal basis $\{{\tau_1},{\tau_2},{\tau_3}\}$ of $\R^3$. 
\end{itemize}
\end{theorem}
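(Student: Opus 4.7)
The plan is to parametrize the $G$-invariant metric connections with skew torsion via Corollary~\ref{co_lasskew}, writing $\nabla=\nabla^g+\tfrac12 T$ with
$T=aT^o+\sum_{r,s=1}^3 b_{rs}T^{rs}+\sum_{l=1}^3 c_l T^{l0}$
(the last summand appearing only when $G=\mathrm{SU}(m)$), and then to translate the parallelism condition $\nabla T=0$ into a polynomial system on the parameters $(a,B,c)$. A first free reduction is available: $\nabla T=0$ forces $\mathrm{div}(T)=0$ through~\eqref{formulicas}, and by Proposition~\ref{pr_div} and Corollary~\ref{co_ricsime} this already implies that $B$ is symmetric. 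Using the $\mathrm{SO}(3)$-action of Remark~\ref{re_accion}, which operates as $B\mapsto PBP^{-1}$ and $c\mapsto Pc$, I can then diagonalize $B=\mathrm{diag}(\lambda_1,\lambda_2,\lambda_3)$ and, when relevant, reduce $c$ to a canonical form using the stabilizer of $B$.

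The central computation is to expand
\[(\nabla_X T)(Y,Z)=(\nabla^g_X T)(Y,Z)+\tfrac12\bigl[T(X,T(Y,Z))+T(Y,T(Z,X))+T(Z,T(X,Y))\bigr]\]
via Lemma~\ref{cuentasauxiliares}(a), computing $\nabla^g T$ at the origin with Lemma~\ref{le_comoderivar} and the explicit $\alpha^g$ of~\eqref{eq_alfadeLevi}. I would evaluate on every type of triple from $\mm=\spf(1)\oplus\g_1$ (vvv, vvh, vhh, hhh, where v and h denote vertical and horizontal inputs respectively), exploiting the multiplication table~\eqref{eq_torsiones} together with the identities $\varphi_s\varphi_u+\varphi_u\varphi_s=-2\delta_{su}\,\mathrm{id}_Q$ for $s,u\in\{1,2,3\}$ and, in the $\mathrm{SU}(m)$ case, $\varphi_0\varphi_s=\varphi_s\varphi_0$. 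The vvv block yields a purely combinatorial system on $(a,\lambda_1,\lambda_2,\lambda_3)$, while the mixed blocks bring in the dimension $n$ through the Killing-form normalization of $g$.

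The main obstacle I expect is the hhh block, where $\alpha^g(X,Y)=\tfrac12[X,Y]_\mm$ splits into a vertical part and a horizontal part $[X,Y]^h\in\g_1$ that has no clean, model-independent description. The way around this is to observe that $T$ sees horizontal vectors only through the $\varphi_s$, so every occurrence of $[X,Y]^h$ enters the computation as $\varphi_s[X,Y]^h$; the cyclic antisymmetrization in Lemma~\ref{cuentasauxiliares}(a), combined with the formula $\eta_r([X,Y]_\mm)=2g(\varphi_r X,Y)$ of Lemma~\ref{cuentasauxiliares}(b) for the vertical part, then collapses this contribution into a model-independent expression. After this collapse the remaining system is purely algebraic in $(a,B,c)$ and $n$.

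Solving the resulting system should give $a=0$ and $c=0$ throughout, and for $n\ge 2$ only the two possibilities $B=2\tau\tau^t$ with $\tau\in\mathbb{S}^2$ (corresponding to the characteristic connections $\nabla^{ch}_\tau$, since $\omega^{ch}_\tau=\eta_\tau\wedge d\eta_\tau=2\eta_\tau\wedge\Phi_\tau$) and $B=2I_3$ (corresponding to the canonical connection $\nabla^c$, since $\omega^c=\sum_i\eta_i\wedge d\eta_i=2\sum_i\eta_i\wedge\Phi_i$). For $n=1$ a coefficient carrying a factor $n-1$ vanishes identically, opening the additional family $B=\tfrac23\mathrm{diag}(1,1,-1)$ up to the $\mathrm{SO}(3)$-action, which is parametrized by $\mathbb{R}P^2$ via the choice of the $-1$ eigendirection and matches the torsion~\eqref{eq_paralela7}. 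A final direct inspection in the $\mathrm{SU}(m)$ case of the vhh block would confirm that no solution with $c\ne 0$ exists, completing the classification.
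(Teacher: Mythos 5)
Your overall strategy coincides with the paper's: parametrize $\nabla=\nabla^g+\frac12T$ via Corollary~\ref{co_lasskew}, get $B=B^t$ for free from $\mathrm{div}(T)=0$ and Proposition~\ref{pr_div}, expand $\nabla T$ with Lemma~\ref{cuentasauxiliares}(a), Lemma~\ref{le_comoderivar} and the explicit $\alpha^g$ of \eqref{eq_alfadeLevi}, evaluate on the various vertical/horizontal blocks, and solve the resulting algebraic system in $(a,B,c)$. Your early diagonalization of $B$ via the $\mathrm{SO}(3)$-action is a legitimate variant; the paper instead keeps $B$ general and extracts the matrix identity $\left(\tfrac{\tr(B)}{2}-1\right)B=\mathrm{Adj}(B)$ from $g\big((\nabla_XT)(Y,\xi_k),\xi_{k+1}\big)=0$, which is what organizes the whole case analysis ($\alpha=0$ gives the rank-one family $B=2\tau\tau^t$; $\alpha\ne0$ gives $B^2=\rho I_3$ with $\rho\in\{0,4,4/9\}$ after one further evaluation showing $\tr(B)=\tfrac32\rho$).

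Two of your structural predictions are off, and the second is a genuine gap. First, your ``main obstacle'' about $[X,Y]^h$ does not exist: for $X,Y\in\g_1$ the $\mathbb Z_2$-grading forces $[X,Y]\in\g_0=\spf(1)\oplus\hh$, so $[X,Y]_\mm$ is purely vertical and Lemma~\ref{cuentasauxiliares}(b) already describes it completely as $2\sum_rg(\varphi_rX,Y)\xi_r$; no collapse mechanism is needed. Second, and more seriously, the restriction of case iii) to $\dim M=7$ is not produced by ``a coefficient carrying a factor $n-1$'' nor by the Killing-form normalization of $g$ (the identities $\varphi_s\varphi_u+\varphi_u\varphi_s=-2\delta_{su}\mathrm{id}_Q$ and the formulas for $T^o,T^{rs}$ are dimension-independent, so no such coefficient appears in the pointwise equation $\nabla T=0$). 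The actual mechanism is the availability of extra test vectors: when $4n>4$ one can choose a horizontal $Z\ne0$ with $g(X,Z)=g(X,\varphi_kZ)=0$ for all $k$, and the equation $(\nabla_XT)(\varphi_kX,Z)=0$ then forces $B=\tfrac{\rho}{2}I_3$, which is incompatible with $\rho=4/9$; for $n=1$ the span of $X,\varphi_1X,\varphi_2X,\varphi_3X$ is all of $\g_1$ and this equation simply does not exist. Without identifying this, your plan as written would not separate $n=1$ from $n\ge2$, which is precisely the content of item iii). (Like the paper, you also leave the converse verification that the three listed torsions are indeed $\nabla$-parallel to a routine check.)
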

\begin{proof}  We choose any of the torsions in \eqref{eq_todasskewtorsion},  or \eqref{eq_todasskewtorsion2}, 
$T=aT^o+\sum_{r=1,s=0}^3 b_{rs}T^{rs}$, and construct the corresponding invariant affine connection $\nabla=\nabla^g+\frac12 T$. (We substitute $c_l$ by $b_{l0}$ to simplify the notation.)
We have to prove that,  if $\nabla T=0$, then $a=0$, $c=(b_{10},b_{20},b_{30})^t=0$,  the matrix $0\ne B=(b_{rs})\in  \mathcal{M}_{3}(\R)$ is symmetric, and either:
\begin{enumerate}[(i)] 
\item \label{a2} $B=2I_3$; 
\item \label{a3} $B=P\,\mathrm{diag}\{2,0,0\}P^{t}$ for some $P\in \SO(3)$;
\item \label{a4} $\dim M=7$,  and $B=\frac23P\,\mathrm{diag}\{1,1,-1\}P^{t}$ for some $P\in \SO(3)$.
 \end{enumerate}

The symmetry of $B$ is consequence of Proposition \ref{pr_div}, because, as already pointed out, $\nabla T=0$ implies $\mathrm {div}(T)=0$. From now, until the end of the proof, $X,Y,Z\in \g_1$.  

The second term of the formula of item (a), Lemma \ref{cuentasauxiliares},  is a cyclic sum, so that\begin{equation}\label{eq_a=0}
0=(\nabla_XT)(Y,\xi_k)-(\nabla_{\xi_k}T)(X,Y) = (\nabla^g_XT)(Y,\xi_k)-(\nabla^g_{\xi_k}T)(X,Y).
\end{equation}
We develop the following  expressions by using   Lemma~\ref{le_comoderivar}.
$$
\begin{array}{cll}
\circ&
 (\nabla^g_XT^o)(Y,\xi_k)&=\alpha^g(X,T^o(Y,\xi_k))-T^o(\alpha^g(X,Y),\xi_k)-T^o(X,\alpha^g(Y,\xi_k))\vspace{1pt} \\
&& = -\frac12 T^o([X,Y]_{\mm},\xi_k)-T^o(X,[Y,\xi_k]_{\mm}) = \frac14 [[X,Y]_{\mm},\xi_k] ;  \\ 
\circ&(\nabla^g_{\xi_k}T^o)(X,Y)& = 
0;\vspace{1pt}\\
\circ&(\nabla^g_XT^{rs})(Y,\xi_k)&=  \delta_{rk}\alpha^g(X,\varphi_sY) - \frac12 T^{rs}([X,Y]_{\mm},\xi_k) + T^{rs}(Y,\varphi_kX) \vspace{1pt}\\
&& = \frac{\delta_{rk}}{2}[X,\varphi_sY]_{\mm}+\frac{\delta_{rs}}{4}[[X,Y]_{\mm},\xi_k]+\Phi_{s}(Y,\varphi_kX)\xi_r;\vspace{1pt}\\
 \circ&(\nabla^g_ {\xi_k}T^{rs})(X,Y)&=\alpha^g(\xi_k,\Phi_s(X,Y)\xi_r)=\frac12  \Phi_{s}(X,Y)[\xi_k,\xi_r] .
\end{array}
$$
Now, we   insert them in \eqref{eq_a=0},
\begin{align*}
0&=a ((\nabla^g_XT^o)(Y,\xi_k)-(\nabla^g_{\xi_k}T^o)(X,Y) )  +\sum_{s\ge0,r}b_{rs}((\nabla^g_XT^{rs})(Y,\xi_k)-(\nabla^g_{\xi_k}T^{rs})(X,Y) ) \\
& =\scriptsize{\frac{\tr(B)-a}{4}}\large[[X,Y]_{\mm},\xi_k] + \sum_{s\ge0} \frac{b_{ks}}{2} [X,\varphi_sY]_{\mm} \\
&\qquad +\sum_{s\ge0,r} b_{rs}g(X,\varphi_k\varphi_sY)\xi_r
 -\sum_{s\ge0,r}\frac{b_{rs}}{2}\Phi_s(X,Y)[\xi_k,\xi_r].
\end{align*}
By applying $\eta_{k+1}$, and remembering that $B$ is symmetric, 
\begin{align*}
0&= (\tr(B)-a)g(\varphi_{k+2}X,Y)\\ 
&\quad +\sum_{s\ge0}\left[ b_{ks}g(\varphi_{k+1}X,\varphi_sY) + b_{k+1,s}g(X,\varphi_k\varphi_sY) 
+ b_{k+2,s}\Phi_{s}(X,Y)\right] \\
&=g\Big(X,(a-\tr(B))\varphi_{k+2}Y -\sum_{s\ge0} b_{ks}\varphi_{k+1}\varphi_sY +\sum_{s\ge0} b_{k+1,s}\varphi_k\varphi_sY 
+\sum_{s\ge0} b_{k+2,s}\varphi_sY\Big) \\
& =  g\left(X, a\varphi_{k+2} Y-b_{k,0}\varphi_{k+1}\varphi_{0}Y
+b_{k+1,0}\varphi_{k}\varphi_{0}Y
+b_{k+2,0} \varphi_{0}Y\right).
\end{align*}
Since the restrictions to $\mathfrak g_1$ of the maps $\varphi_{k+2}$, $\varphi_0$, $\varphi_0\varphi_k$ and $\varphi_{0}\varphi_{k+1}$ are linearly independent in case $G=\SU(m)$ (see Eqs.~\eqref{eq_accionesficasoU} and \eqref{eq_accionesficasoU_0}), then
$a=0=c_r$ for any $r=1,2,3$ and any $G$. 

Thus, the torsion $T= \sum_{r,s}  b_{rs}T^{rs}$ satisfies
\begin{equation}\label{tensoresT} 
\begin{array}{l}
T(X,Y)=\sum_{r,s\ge1} b_{rs} g(X,\varphi_sY)\xi_r,  \\
  T(X,\xi_k)=\sum_{s\ge1} b_{ks} \varphi_sX, \\  T(\xi_k,\xi_{k+1})=-\tr(B)\xi_{k+2}.
\end{array}
\end{equation}
The following step is to prove
\begin{equation}\label{eq_laclave}
\left(\frac{\tr(B)}{2} -1\right)B^2=\det(B)I_3.
\end{equation}
  To that end, we use $g\big((\nabla_XT)(Y,\xi_k),\xi_{k+1}\big)=0$. First we compute 
\begin{align*}
& g\big(\xi_{k+1},T(X,T(Y,\xi_k))+T(Y,T(\xi_k,X))+T(\xi_k,T(X,Y))\big)\vspace{2pt} \\
& = -g\big(T(Y,\xi_k),T(X,\xi_{k+1}\big) 
-g\big(T(\xi_k,X),T(Y,\xi_{k+1}\big)  -g\big(T(X,Y),T(\xi_k,\xi_{k+1}\big) \vspace{2pt}\\
& = \sum_{r,s} \left[b_{kr}b_{k+1,s}-b_{ks}b_{k+1,r}\right] g(\varphi_sY,\varphi_rX)
+ \tr(B)\sum_s b_{k+2,s}g(X,\varphi_sY) ;\\
&g\big((\nabla^g_XT)(Y,\xi_k),\xi_{k+1}\big)
 \vspace{2pt}\\
& = \frac12\sum_s b_{ks} g(\xi_{k+1},[X,\varphi_sY]_{\mm}) 
+\frac12 g([X,Y]_{\mm},T(\xi_{k+1},\xi_k)) 
-g(\xi_{k+1},T(Y,-\varphi_kX))  \vspace{2pt}\\
& = \sum_s b_{ks} g(\varphi_{k+1}X,\varphi_sY) 
+\frac12\tr(B) g([X,Y]_{\mm},\xi_{k+2})) 
+\sum_{r,s} b_{rs} g(Y,\varphi_s\varphi_kX)g(\xi_{k+1},\xi_r)  \vspace{2pt}\\
& = -\sum_s b_{ks} g(X,\varphi_{k+1}\varphi_sY) 
-\tr(B) g(X,\varphi_{k+2}Y) 
+\sum_{s} b_{k+1,s} g(X,\varphi_k\varphi_sY).
\end{align*}

All together, and using again that $B$ is symmetric, 
\begin{align*}
0&=g\big(\xi_{k+1},(\nabla_XT)(Y,\xi_k)\big) \vspace{4pt}\\ 
& =- \sum_s b_{ks} g(X,\varphi_{k+1}\varphi_sY) 
-\tr(B) g(X,\varphi_{k+2}Y) 
+  \sum_{s} b_{k+1,s} g(X,\varphi_k\varphi_sY) \\
& \quad +\frac12 \sum_{r,s} \left[b_{kr}b_{k+1,s}-b_{ks}b_{k+1,r}\right] g(\varphi_sY,\varphi_rX)
+ \frac{\tr(B)}{2}   \sum_s b_{k+2,s}g(X,\varphi_sY) \vspace{4pt}\\
&=-\textstyle \sum_s b_{ks} g(X,\varphi_{k+1}\varphi_sY) 
-\tr(B) g(X,\varphi_{k+2}Y) 
+\textstyle \sum_{s} b_{k+1,s} g(X,\varphi_k\varphi_sY) \\
& \quad -\textstyle \sum_{r<s} \left[b_{kr}b_{k+1,s}-b_{ks}b_{k+1,r}\right] g(X,\varphi_r\varphi_sY)
+ \frac{\tr(B)}{2}\sum_s b_{s,k+2}g(X,\varphi_sY)  \\
\end{align*}
\begin{align*}
& = 
\textstyle\left[\left(\frac{\tr(B)}{2}-1\right)b_{k,k+2}-b_{k,k+1}b_{k+1,k+2}+b_{k,k+2}b_{k+1,k+1}\right]\Phi_k(X,Y) \vspace{5pt}\\
&\textstyle\quad+ \left[ 
\left(\frac{\tr(B)}{2}-1\right)b_{k+1,k+2} +b_{kk}b_{k+1,k+2}-b_{k,k+2}b_{k,k+1}\right]\Phi_{k+1}(X,Y) \\
&\textstyle\quad +\left[ 
\left(\frac{\tr(B)}{2}-1\right)b_{k+2,k+2}+b_{k,k+1}^2-b_{kk}b_{k+1,k+1}\right] \Phi_{k+2}(X,Y).
\end{align*}
Since $\Phi_1,\Phi_2,\Phi_3$ are linearly independent, putting $\alpha=\tr(B)/2-1$, then 
\[\left\{
\begin{array}{lclcll}
\alpha\, b_{k,k+2}&-&b_{k,k+1}b_{k+1,k+2}&+&b_{k,k+2}b_{k+1,k+1}&=0 \\
\alpha\, b_{k+1,k+2}& +&b_{kk}b_{k+1,k+2}&-&b_{k,k+2}b_{k,k+1}&=0 \\
\alpha\, b_{k+2,k+2}&-&b_{kk}b_{k+1,k+1}&+&b_{k,k+1}^2&=0.
\end{array}
\right.\]
By denoting by $\mathrm{Adj}(B)$ the adjugate matrix of $B$,  these equations are equivalent to
$\alpha B-\mathrm{Adj}(B)=0$. By myltiplying both sides of this equation by $B$, we obtain Eq.~\eqref{eq_laclave}.
\smallskip

$\ast$ If $\alpha=0$, then $\tr(B)=2$ and $\mathrm{Adj}(B)=0$. Since all cofactors of order 2 vanish, then $\mathrm{rank}(B)\leq 1$. As $\tr(B)=2$, $\mathrm{rank}(B)=1$, so that the two eigenvalues of $B$ must be $\lambda_1=2$ and $\lambda_2=0$ (double). 
As $B$ is symmetric (hence diagonalizable), there is $P\in \SO(3)$ such that  
\[ B=P\left(\begin{matrix} 2 & 0 & 0 \\ 0 & 0 & 0 \\ 0 & 0 & 0 \end{matrix}\right) P^t,
\]
and we are in case \ref{a3}).

$\ast$ If $\alpha\neq 0$, then   $B^2=\frac{\det(B)}{\alpha}I_3$.
That is, if  $\rho:=\frac{\det(B)}{\alpha}$, and for all $r,s\in\{1,2,3\}$,
\begin{equation} \label{Bcuadrado} 
\sum_i b_{ri}b_{is}=\rho \delta_{rs}.
\end{equation}
Now, let us prove that 
$\tr(B)=3\rho/2$. 
Take $X\in\g_1$ unit. 
By \eqref{Bcuadrado}, 
$$\begin{array}{ll}
\circ& T(X,T(\varphi_1X,\varphi_2X))+T(\varphi_1X,T(\varphi_2X,X))+T(\varphi_2X,T(X,\varphi_1X)) \vspace{1pt}\\
& =T(X, -\sum_r b_{r3}\xi_r  )+T(\varphi_1X,  \sum_r b_{r2}\xi_r  )+T(\varphi_2X,  -\sum_r b_{r1}\xi_r  )\vspace{1pt} \\
& = -\sum_{r,s}b_{r3}b_{rs}\varphi_sX+\sum_{r,s} b_{r2} b_{rs}\varphi_s\varphi_1X -\sum_{r,s}b_{r1} b_{rs} \varphi_s\varphi_2X \vspace{1pt}\\
& = - \sum_s\rho\delta_{3s}\varphi_sX + \sum_{s} \rho \delta_{2s}\varphi_s\varphi_1X 
-\sum_{s}\rho\delta_{1s} \varphi_s\varphi_2X 
= -3\rho\varphi_3X; \vspace{3pt}\\
\circ& (\nabla^g_XT)(\varphi_1X,\varphi_2X) \\
&  =  
  -\sum_r b_{r3} \alpha^g(X,\xi_r) - \frac12 T([X,\varphi_1X]_{\mm},\varphi_2X) -\frac12 T(\varphi_1X,[X,\varphi_2X]_{\mm}) \vspace{1pt}\\
& = \sum_r b_{r3} \varphi_rX - T(\xi_1,\varphi_2X) - T(\varphi_1X, \xi_2)\vspace{1pt} \\
&= \sum_r b_{r3} \varphi_rX + \sum_s b_{1s}\varphi_s\varphi_2X - \sum_s b_{2s}\varphi_s\varphi_1X =
\tr(B)\varphi_3X.
\end{array}
$$
All together, $0 =(\nabla_XT)(\varphi_1X,\varphi_2X) 
=   \left(\tr(B)-\frac32 \rho\right)\varphi_3X$,
 and we get $\tr(B)=3\rho/2$.

Moreover, from $\alpha\rho=\det(B)$, we obtain $\alpha^2\rho^2=(\det(B))^2=\det(B^2)=\det(\rho I_3)$ $=\rho^3$. Thus, there are two possibilities, $\rho=0$ or $\rho=\alpha^2\neq 0$.   In this second case, $\rho=\alpha^2=(\tr(B)/2-1)^2 = \left(\frac{3\rho}{4}-1\right)^2$. The solutions to this equation are
uniquely  $\rho=4$ and $\rho=4/9$.

Since  $B=B^t$, there exists $P\in \SO(3)$ such that $B=P\,\mathrm{diag}\{\lambda_1,\lambda_2,\lambda_3\}P^t$, where $\lambda_i$, $i=1,2,3$, are the eigenvalues of $B$. Then $\rho I_3=B^2 =P\,\mathrm{diag}\{\lambda_1^2,\lambda_2^2,\lambda_3^2\}\,P^t$, and $\lambda_i^2=\rho$ for all $i$.
\begin{itemize}
\item \underline{If $ \rho=0$}, then 
$\lambda_i=0$ for all $i$, so that $B=0$ and $T=0$, which is not our case. 

\item \underline{If $\rho=4$}, then $\lambda_i\in\{\pm 2\}$, $i=1,2,3$. Since $\sum_i\lambda_i=\tr(B)=3\rho /2=6$, then the only possibility is $\lambda_i=2$ for any $i=1,2,3$. That is to say, $B=2I_3$, the case \ref{a2}).

\item  \underline{If $\rho=4/9$}, then $\lambda_i\in \{\pm 2/3\}$, $i=1,2,3$. But now, $\sum_i\lambda_i=\tr(B)=3\rho/2 = 2/3$. The unique possibility is $\lambda_1=2/3=\lambda_2=-\lambda_3$ (up to reordering the indices). To finish, we only have to check that this case \ref{a4}) is only possible when $\dim M=7$. 
\end{itemize}

Assume that $\dim M>7$. We can choose  $X,Z\in\g_{1}$ such that $X$ is unit, $Z\neq 0$, and $g(X,Z)=g(X,\varphi_kZ)=0$ for any $k=1,2,3$. Next, we  compute $(\nabla_XT)(\varphi_kX,Z)$. In this case, it holds 
\[
[X,Z]_{\mm}=2\sum_rg(\varphi_rX,Z)\xi_r=0, \quad 
[X,\varphi_kX]_{\mm}=2\sum_rg(\varphi_rX,\varphi_kX)\xi_r=2\xi_k.
\]
Now, by \eqref{tensoresT}, $T(X,Z)=0=T(\varphi_kX,Z)$ and $T(X,\varphi_kX)=\sum_{r,s}b_{rs}g(X,\varphi_s\varphi_kX)\xi_r = -\sum_r b_{rk}\xi_r$. And, by \eqref{Bcuadrado}, we get 
$$
\begin{array}{ll}
\circ& T(X,T(\varphi_kX,Z))+T(\varphi_kX,T(Z,X))+T(Z,T(X,\varphi_kX))= -\sum_r b_{rk}T(Z,\xi_r) \vspace{1pt} \\
& = -\sum_r b_{rk}\sum_s b_{rs} \varphi_sZ = -\sum_{r,s} b_{kr}b_{rs}\varphi_sZ=-\sum_s\rho \delta_{ks}\varphi_sZ=-\rho \varphi_kZ;\vspace{3pt} \\
\circ& (\nabla^g_XT)(\varphi_kX,Z) = \alpha^g(X,T(\varphi_kX,Z))-T(\alpha^g(X,\varphi_kX),Z)-T(\varphi_kX,\alpha^g(X,Z)) \vspace{1pt} \\
& = -\frac12 T([X,\varphi_kX]_{\mm},Z) = T(Z,\xi_k)=\sum_sb_{sk}\varphi_sZ.
\end{array}
$$ 
Therefore,  
$0=(\nabla_XT)(\varphi_kX,Z)=   ( \sum_sb_{sk}\varphi_s  -\frac\rho2 \varphi_k)Z$  for any $k=1,2,3$.
This implies $b_{ks}=0$ for $k\neq s$ and $b_{kk}=\rho/2$ for any $k=1,2,3$. The matrix $B$ becomes $B=(\rho/2)I_3$. But as $B^2=\rho I_3$, then $\rho=\rho^2/4$. That is, either $\rho=0$ or $\rho=4$. Thus, the case $\rho=\frac49$ is ruled out.

Conversely, we can check that any of these torsions $T$ is indeed parallel (for the corresponding affine connection $\nabla=\nabla^g+\frac12T$).  The ideas have been already shown, so that these long but tedious computations are left to the reader.  
\end{proof}

 \begin{corollary} Let $M=G/H$ be a 3-Sasakian homogeneous manifold, $\dim M\geq 7$. Let $\nabla$ be a  $G$-invariant  affine connection  which is Einstein with nonzero parallel skew  torsion. Then,   $\dim M=7$ and $\nabla=\nabla^g+\frac12\sum_{r,s}b_{rs}T^{rs}$, where $B=(b_{rs})$ satisfies $B=B^t$, $I_3\ne -\frac32 B\in \SO(3)$. 
\end{corollary}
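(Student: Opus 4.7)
The plan is to combine Theorem~\ref{dimn=1} and Theorem~\ref{th_paralela} and then cross-check which of the \emph{parallel} torsions actually satisfy the Einstein condition. First I would invoke Theorem~\ref{dimn=1}\,i): since $\nabla$ is a $G$-invariant connection which is Einstein with nonzero skew torsion, the dimension must be $7$, so $M$ is one of $\mathbb S^7$, $\R P^7$, or $\mathfrak W^7_{1,1}$, and then parts ii)--iii) of that theorem force $a=\tr(B)\pm\sqrt{(\|B\|^2+\|c\|^2)/3}$, together with $BB^t+cc^t\in\R I_3$ and $c^tB=0$.

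Next I would apply Theorem~\ref{th_paralela} to conclude that the torsion of $\nabla$ must be one of three prescribed $3$-forms, and translate each of them into the parameters $(a,B,c)$ of Corollary~\ref{co_lasskew} via the identifications $\eta_i\wedge d\eta_i=2\eta_i\wedge\Phi_i\leftrightarrow 2T^{ii}$ and $\eta_1\wedge\eta_2\wedge\eta_3\leftrightarrow T^o$. In all three cases the constants $a$ and $c$ vanish, so the only free datum is the matrix $B$, which equals $2I_3$ for the canonical connection of case i), $2P\,\mathrm{diag}\{1,0,0\}P^t$ for the characteristic connection of case ii), and $\tfrac{2}{3}P\,\mathrm{diag}\{1,1,-1\}P^t$ for case iii). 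The Einstein-with-skew-torsion condition with $a=0$, $c=0$ reduces (Eq.~\eqref{eq_paradem1} with $n=1$) to $3\,\tr(B)^2=\|B\|^2$. A direct computation rules out case i) ($108\ne12$) and case ii) ($12\ne4$), while case iii) satisfies it ($\tfrac43=\tfrac43$); one also checks that the corresponding $B$'s satisfy $BB^t\in\R I_3$ in case iii) only, so that the remaining Einstein requirements of Theorem~\ref{dimn=1} are met automatically.

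Finally I would translate the surviving family $B=\tfrac{2}{3}P\,\mathrm{diag}\{1,1,-1\}P^t$ into the compact form stated in the corollary. Observe that $-\tfrac32 B=P\,\mathrm{diag}\{-1,-1,1\}P^t$ is symmetric, orthogonal with determinant $+1$, hence in $\SO(3)$, and is different from $I_3$ since its eigenvalues are not all $+1$. Conversely, any symmetric $B$ with $-\tfrac32 B\in\SO(3)\setminus\{I_3\}$ must have eigenvalues $\{-\tfrac23,\tfrac23,\tfrac23\}$ (the alternative $B=-\tfrac23 I_3$ being excluded by the condition $-\tfrac32 B\ne I_3$), so it has the required form. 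The only nontrivial part of the plan is the case-by-case verification in the middle paragraph, but these reduce to the already computed identity $3\,\tr(B)^2=\|B\|^2$ together with the scalar identity $BB^t=\tfrac{\|B\|^2}{3}I_3$, so no further obstacle is expected.
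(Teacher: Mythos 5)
Your proposal is correct and is essentially the paper's intended argument: the corollary is obtained by intersecting the classification of parallel skew torsions in Theorem~\ref{th_paralela} (which forces $a=0$, $c=0$ and one of the three matrices $B$) with the Einstein condition of Theorem~\ref{dimn=1}, and your trace identity $3\,\tr(B)^2=\|B\|^2$ correctly eliminates cases i) and ii) while retaining case iii), whose reformulation as $B=B^t$ with $I_3\ne-\tfrac32B\in\SO(3)$ you verify in both directions. The only (harmless) slip is the parenthetical claim that $BB^t\in\R I_3$ holds \emph{only} in case iii): it also holds in case i), where $BB^t=4I_3$, but that case is already excluded by the trace identity, so the conclusion is unaffected.
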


Note that the results agree with those ones in the sphere $\mathbb{S}^7$ in \cite[Remark~5.16]{DraperGarvinPalomo}, where the family in Theorem~\ref{th_paralela} iv) appeared from a different approach.

\section*{Acknowledgements}

The authors would like to thank the referee for the deep reading of this manuscript,  for his/her valuable  suggestions to include Section~\ref{sec_paralela} on parallel torsion and for several comments on the bibliography.

\end{document}